\documentclass[10pt,reqno]{amsart}

\usepackage{amssymb,amsthm,amsmath,mathtools}
\usepackage{mathdots,bm}
\usepackage[left=2.5cm, right=2.5cm, top=3cm, bottom=3cm]{geometry}
\usepackage{titlesec}
\titleformat{\subsection}[hang]{\normalfont\bfseries}{\thesubsection}{1em}{}
\usepackage{secdot}
\usepackage{ifthen}
\usepackage{mathabx}
\usepackage{hyperref}
\usepackage{float}
\usepackage{import}
\usepackage{graphicx}   
\usepackage{listings}
\usepackage{color} 
\usepackage{xcolor}
\definecolor{codegreen}{rgb}{0,0.6,0}
\definecolor{codegray}{rgb}{0.5,0.5,0.5}
\definecolor{codepurple}{rgb}{0.58,0,0.82}
\definecolor{backcolour}{rgb}{0.95,0.95,0.92}
\lstdefinestyle{mystyle}{
    backgroundcolor=\color{backcolour},   
    commentstyle=\color{codegreen},
    keywordstyle=\color{magenta},
    numberstyle=\tiny\color{codegray},
    stringstyle=\color{codepurple},
    basicstyle=\ttfamily\footnotesize,
    breakatwhitespace=false,         
    breaklines=true,                 
    captionpos=b,                    
    keepspaces=true,                 
    numbers=left,                    
    numbersep=6pt,                  
    showspaces=false,                
    showstringspaces=false,
    showtabs=false,                  
    tabsize=4
}

\usepackage{multirow}  
\usepackage{float}
\usepackage{tcolorbox}
\usepackage[numbered]{matlab-prettifier}
\usepackage{diagbox}
\usepackage{cleveref}
\usepackage{diagbox}
\usepackage{algorithm}
\usepackage{algpseudocode}
\usepackage{caption}
\usepackage{subcaption}
\usepackage{textcomp,multirow}
\theoremstyle{plain}
\newtheorem{thm}{Theorem}[section]
\newtheorem*{thm*}{thm}
\newtheorem{conj}{Conjecture}[section]

\numberwithin{equation}{section}

\newtheorem{lem}{Lemma}[section]
\newtheorem{prop}{Proposition}[section]

\newtheorem{rem}{Remark}[section]

\newtheorem{defn}{Definition}[section]

\theoremstyle{definition}
\newcounter {own}
\def\theown {\thesection  .\arabic{own}}

\newenvironment{pf}[1][]{%
 \vskip 2mm
 \noindent
 \ifthenelse{\equal{#1}{}}%
  {{\slshape Proof. }}%
  {{\slshape #1.} }%
 }%
{\qed\bigskip}
\graphicspath{ {./images/} }

\DeclareMathOperator*{\supp}{\supp}
\DeclareMathOperator*{\rank}{rank}

\newcounter{alphabet}

\newcommand{\R}{\mathbb{R}}

\newcommand{\Z}{\mathbb{Z}}

\newcommand{\qtq}[1]{\quad\text{#1}\quad}

\newtheorem{theorem}{Theorem}[section]

\theoremstyle{definition}

\theoremstyle{remark}

\newcommand{\ds}{\displaystyle}

\begin{document}
\title[Gabor Frames for a Rational Window with Symmetric Poles]{Gabor Frame for a Rational Window with Symmetric Poles}
\thanks{Indian Institute of Technology (IIT) Bombay, Mumbai, India, 400076\\
Email: riya74012@gmail.com}
\author{Riya Ghosh}

\subjclass[2020]{42C15, 42A82}
\keywords{Frame set, Rational windows, Gabor frame, Zibulski-Zeevi matrix, Zak transform}

\begin{abstract}
We investigate the Gabor frame problem for a rational window
$$g(x)=
\frac{x}{(x^2+1)(x^2+4)}.$$
This window falls outside the classes covered by existing frame set characterizations for rational Herglotz functions and ratios of exponential polynomials.  For rational densities ($\alpha\beta = p/q$), we leverage the Zak transform to map the Gabor frame condition directly to a finite-dimensional equivalence criterion, which is completely determined by the rank of an associated polynomial matrix. Applying this framework, we establish new frame results for every rational density $$\alpha\beta=\frac{p}{q}\in \left(0,\frac{1}{2}\right)\cup\left(\frac{1}{2},\frac{2}{3}\right)\cup \left(\frac{2}{3},\frac{5}{7}\right].$$
\end{abstract}
\date{\today}
\maketitle
\pagestyle{myheadings}
\markboth{Riya Ghosh}{Gabor Frames for a Rational Window with Symmetric Poles}
\section{Introduction and Preliminaries}
Let $g\in L^2(\mathbb{R})$ be a non-zero window function with lattice parameters $\alpha, \beta > 0$. The corresponding Gabor system  $\mathcal{G}(g,\alpha, \beta)=\{e^{2\pi i\beta m\cdot}g(\cdot-\alpha k): k, m\in\mathbb{Z}\}$ is called a (Gabor) frame for $L^2(\mathbb{R})$ if there exist two positive constants $A$, $B$ such that 
\begin{equation}\label{Gaborframe}
A\|f\|^2\leq\displaystyle\sum_{m,k\in\mathbb{Z}}|\langle f,e^{2\pi i\beta m\cdot} g(\cdot-\alpha k)\rangle|^2\leq B\|f\|^2,
\end{equation}
for every $f\in L^2(\mathbb{R})$. The constants $A$ and $B$ are called frame bounds. One of the fundamental problems in Gabor analysis is to determine the values of $\alpha, \beta>0$ such that $\mathcal{G}(g,\alpha, \beta)$ is a frame for $L^2(\mathbb{R})$. The set of all such lattice parameters is referred to as the frame set of $g$ and is given by
$$\mathcal{F}(g)= \left\{(\alpha, \beta) \in\mathbb{R}^2_+ : \mathcal{G}(g, \alpha,\beta)~ \text{is a frame}\right\} .$$

In this paper, we use the following version of the Fourier transform: 
$$\widehat f(w)= \int_{-\infty}^{\infty} f(x) e^{-2\pi \mathrm{i}wx}dx,~ w\in\mathbb{R}.$$
Feichtinger and Kaiblinger \cite{HGF1} proved that $\mathcal{F}(g)$ is an open subset of $\mathbb{R}^2_+$ for a window $g$ in Feichtinger algebra. The fundamental density theorem asserts that 
$$\mathcal{F}(g)\subseteq \{(\alpha, \beta) \in\mathbb{R}^2_+: \alpha\beta\le 1\}$$ 
(see \cite{time, hedtg}). In addition, if $g$ is in the Feichtinger algebra, Balian-Low theorem states that $\mathcal{F}(g)\subseteq \{(\alpha, \beta) \in\mathbb{R}^2_+: \alpha\beta< 1\}$ \cite{balian, rdbalian}. For a more comprehensive discussion on Gabor analysis, we refer to \cite{ole, time}. 

The frame set is completely characterized only for a few windows including the Gaussian $e^{-\pi x^2}$ \cite{fblyu, dtsibf2}, the hyperbolic secant \cite{hsyg}, the one-sided exponential $e^{-x}\chi_{[0,\infty)}(x)$ \cite{whfb}, the two-sided exponential  $e^{-|x|}$ \cite{tgfcd}, the characteristic function $\chi_{[0,c)},~c>0$ \cite{abc, when}, the totally positive functions of finite type $\geq 2$ or of Gaussian type \cite{stsis, duke}, the Haar function \cite{Haar} and the Herglotz functions \cite{Yurii:rational:2023}, whose poles lie entirely in one half-plane. Using a sufficient condition for Gabor frames that connects sampling theory in shift-invariant spaces with Gabor analysis, the authors in \cite{ghosh2025gabor} present a frame region that is close to the frame set of a totally positive function, along with explicit frame bounds. For a complete characterization of Gabor frames for a given window, we refer to \cite{feichtinger2012advances, feichtinger2012gabor,grochenig2014mystery}.

In this paper, we investigate the Gabor frame properties of a rational window
\begin{align}\label{window}
    g(x)=\frac{x}{(x^2+1)(x^2+4)}
\end{align}
The poles of $g$ occur in symmetric pairs $\pm ik$, $k=1,2$. Moreover, $g\in \mathcal{S}_0(\mathbb{R})\cup W(L^\infty,\ell^1)$. The partial-fraction decomposition of $g$ plays a central role in our analysis. More precisely, writing
\begin{align}\label{defn w_k}
    \omega_k=k,\qtq{and} \omega_{2+k}=-\omega_k,\qtq{for} 1\le k\le 2,
\end{align}
we have
\begin{align}\label{rational}
g(t)=\sum_{k=1}^{4}\frac{a_k}{t-i\omega_k},
\end{align}
where,
\begin{align}\label{defn a_k}
a_1=a_3=\frac{1}{6},\qtq{and}a_2=a_4=-\frac{1}{6}.
\end{align}
Here, 
$\sum_{k=1}^{4} a_k=0$ with $\sum_{k=1}^{4}\omega_k=0$. 

The frame-set problem for these windows lies outside the scope of several existing complete characterizations. Belov \textit{et al.} \cite{Yurii:rational:2023} obtained a complete characterization for rational Herglotz functions whose poles lie entirely in a common half-plane. More recently, Ulanovskii and Zlotnikov \cite{ulanovskii2025sampling} studied generators represented as ratios of exponential polynomials with exponential or Gaussian decay. In contrast, the windows $g$ defined in \eqref{window} are purely algebraic rational functions with polynomial decay $O(|x|^{-3})$ and symmetric poles at $\pm ik$. Thus, they are neither Herglotz functions nor ratios of exponential polynomials, and therefore lie outside the scope of these general frame set characterizations.

There is, however, an important distinction between the irrational and rational densities $\alpha\beta$. In Section \ref{section 3}, we prove that
$$m_0(\xi)=\sum_{k=1}^{4}a_k e^{2\pi \xi\omega_k}\neq0,\quad \xi>0,$$
Consequently, Theorem 1.2 in \cite{Yurii:rational:2023} guarantees that the Gabor system $\mathcal{G}(g,\alpha,\beta)$ forms a frame whenever $\alpha\beta<1$ is irrational. Thus, for $g$, the genuinely unresolved part of the frame set problem is the rational density case. The main purpose of this paper is to investigate this remaining case by exploiting the explicit algebraic structure of the Zak transform.

Recently, Semenov \cite{semenov2025} established a universal frame result for a broad class of rational functions. For every $M\in\mathbb N$ and $\varepsilon>0$, he constructed a nonuniform frequency set $\Lambda\subset\mathbb R$ with $D(\Lambda)\le 1+\varepsilon$ such that $\mathcal G(g,\Lambda\times\mathbb Z)$ is a frame for every rational window $g$ in his class of degree at most $M$. The windows $g$ considered here belong to this class and have degree $4$. However, Semenov's result concerns a specially constructed nonuniform frequency set, whereas the present paper studies the classical rectangular lattice $\alpha\mathbb Z\times\beta\mathbb Z$. Thus, his result does not determine the rectangular-lattice frame set considered here.

Since $g$ is odd, the obstruction of Lyubarskii and Nes \cite{lyubarskii2013gabor} gives
    $$\mathcal{G}(g,\alpha,\beta)\text{ does not form a frame in }L^2(\R)\text{ for }\alpha\beta=\frac{N}{N+1},~N=1,2,\dots$$

This obstruction motivates the search for positive rational frame results for the odd window $g$. To treat rational densities, we reduce the Gabor frame condition directly to a finite-dimensional polynomial matrix. Although this formulation is rank-equivalent to the classical Zibulski--Zeevi matrix \cite{zibulski1997analysis}, our explicit polynomial construction exposes the algebraic structure of the Zak transform equations more transparently. In particular, the kernel equations become a finite-dimensional polynomial system whose structure allows us to eliminate the coefficients directly. For $g$, this leads to different kernel-elimination mechanisms in different density regimes. In particular, the combinatorial structure of the matrix $\Phi$ changes at the densities $3/5$ and $5/7$, which naturally leads to separate arguments for the intervals
$$0<\alpha\beta=\frac{p}{q}<\frac{1}{2},\quad \frac{1}{2}<\alpha\beta=\frac{p}{q}<\frac{3}{5},\quad\frac{3}{5}<\alpha\beta=\frac{p}{q}<\frac{2}{3},\qtq{and} \frac{2}{3}<\alpha\beta=\frac{p}{q}<\frac57.$$
Our main result in this range is the following.
\begin{thm}\label{p/qin(1/2,2/3)}
The Gabor system $\mathcal{G}(g, \alpha, \beta)$ forms a frame for $L^2(\mathbb{R})$ for every reduced rational density 
$$\alpha\beta = \frac{p}{q} \in \left(0,\frac{1}{2}\right)\cup\left(\frac{1}{2}, \frac{2}{3}\right)\cup \left(\frac{2}{3}, \frac{5}{7}\right].$$
\end{thm}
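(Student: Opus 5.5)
We will use the Zibulski--Zeevi criterion in the polynomial form described above. By rescaling we may assume $\alpha$ and $\beta=p/(q\alpha)$ are fixed. For $g\in W(L^\infty,\ell^1)$ with $\alpha\beta=p/q$, the system $\mathcal{G}(g,\alpha,\beta)$ is a frame if and only if the $p\times q$ matrix
\[
\Phi(x,\xi)=\Bigl(Z_{\alpha q/p}\,g\bigl(x+\tfrac{\alpha j}{p}\cdot\text{shift},\,\xi+\tfrac{k}{q}\bigr)\Bigr)
\]
built from the Zak transform has full rank $p$ for all $(x,\xi)$. Because $g$ is rational, the partial fraction form \eqref{rational} lets us compute $Zg$ in closed form. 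Each term $a_k/(t-i\omega_k)$ has a Zak transform equal to $a_k$ times an explicit rational function of $e^{2\pi i x/\alpha}$-type variables; the denominators are of the form $1-\lambda_k z$ with $\lambda_k=e^{\mp 2\pi\omega_k\cdot}$. After clearing denominators, each column of $\Phi$ becomes a combination $\sum_{k=1}^4 c_k(x,\xi)\,\lambda_k^{j}$, $j=0,\dots,q-1$, of four geometric sequences. So $\Phi$ factors as (a $p\times 4$ piece)$\times$(a $4\times q$ Vandermonde-type piece) modulo the periodization in $\xi$. In particular, for $p\ge 5$ the rank is at most $4$, which is impossible for full rank. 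The first step is therefore to write this factorization precisely: $\Phi = V_p\, D(x,\xi)\, W_q$, with $V_p$ and $W_q$ generalized Vandermonde matrices in $e^{2\pi\omega_k \cdot}$ and $D$ diagonal-plus-correction coming from the wraparound terms.

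Second, we show the rank condition is equivalent to the triviality of the left kernel. A vector $c\in\mathbb{C}^p$ with $c^T\Phi=0$ corresponds to a trigonometric polynomial $P(z)=\sum_j c_j z^j$ of degree $<p$ satisfying $q$ equations. Each equation says that a linear combination of $P(\lambda_k \zeta_l)$, with $\zeta_l$ the $q$-th roots of unity shifted by $\xi$, weighted by $a_k$ and the factors $(1-\lambda_k^{q}\ldots)^{-1}$, vanishes. We then eliminate the coefficients. Using $\sum a_k=0$ and the sign pattern $a_1=a_3=-a_2=-a_4$, the equations pair up the poles $\pm i$ and $\pm 2i$. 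We get a linear recurrence in $l$ whose characteristic data come from $m_0(\xi)=\sum a_k e^{2\pi\xi\omega_k}$. The fact $m_0(\xi)\neq 0$ for $\xi>0$, proved in Section~\ref{section 3}, is what makes the leading coefficient invertible.

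Third, we treat the density regimes separately.
\begin{itemize}
\item For $p/q<1/2$ we have $q>2p$. Then there are more equations than twice the degree of freedom, and a counting argument shows $P\equiv 0$: the equations force $P$ to vanish at more than $p-1$ distinct points.
\item For $1/2<p/q<2/3$ the counting alone fails. Here we use the overlap structure of the index shifts $j\mapsto j+q \bmod p$, which changes combinatorially at $3/5$; hence the split into $(1/2,3/5)$ and $(3/5,2/3)$. In each piece we show the surviving equations form a triangular system with diagonal entries that are nonzero multiples of $m_0$ or of $\lambda_1-\lambda_2$-type differences.
\item For $(2/3,5/7]$ we argue the same way, with one more layer of elimination. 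The endpoint $5/7$ must be checked by hand, since there the chain closes up.
\end{itemize}

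The main obstacle is the middle regime, near $3/5$ and in $(2/3,5/7]$. There the kernel equations are not immediately triangular, and the oddness of $g$ produces exactly the cancellations behind the Lyubarskii--Nes obstruction at $N/(N+1)$. The elimination must show that these cancellations occur only at $2/3$ (and at $1/2$), not at nearby reduced fractions. The first thing to try is an explicit determinant of the reduced $2\times 2$ or $3\times 3$ system in the variables $\lambda_1=e^{2\pi x}$ and $\lambda_2=\lambda_1^2$, and to verify that it is nonvanishing using the positivity of $m_0$.
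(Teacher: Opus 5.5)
Your proposal has a genuine gap: it locates the difficulty correctly but supplies neither the subsystems nor the inequalities that the main range $(1/2,5/7]$ requires. Two of its explicit steps are also wrong.

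First, the claimed rank bound is false. In the paper's normalization the $(s,r)$ entry of the Zibulski--Zeevi matrix is
\[
e^{-2\pi i rs/q}\,Q_\beta\Bigl(z_s;\xi+\frac rp\Bigr)=\sum_{k=1}^{4}\frac{a_k e^{2\pi\beta\omega_k\xi}}{1-z_s e^{2\pi\beta\omega_k}}\bigl(e^{-2\pi i s/q}\mu_k\bigr)^{r},
\]
where $z_s=ze^{-2\pi i ps/q}$ and $\mu_k=e^{2\pi\beta\omega_k/p}$. The geometric ratios in $r$ change from row to row, so there is no factorization through $\mathbb{C}^4$. If the rank really were at most $4$ for $p\ge5$, the system would fail to be a frame at $p/q=5/7$ and $5/11$, contradicting the statement you want to prove.

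Second, the counting argument for $p/q<1/2$ does not work. An overdetermined linear system can have a nontrivial kernel. Moreover, each equation $\sum_k \rho_{k,s}P(\mu_k e^{-2\pi i s/q})=0$ mixes four values of $P$; it does not force $P$ to vanish at any point. What works is an aliasing step. The $q$ equations say that $\sum_{r,l}c_r m_{l,\beta}(\xi_r)z^l w^{r+pl}$ vanishes at all $q$-th roots of unity $w$, hence $D_k(z;\xi)=0$ for every residue $k$. For $q>2p$ the rows $k=p+j$ read $c_jm_{1,\beta}(\xi_j)z+c_{j+q-2p}m_{3,\beta}(\xi_{j+q-2p})z^3=0$, with the second term present only if $j+q-2p\le p-1$. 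This system is triangular with pivots $m_{1,\beta}(\xi_j)<0$. Note that the pivots are $m_1$, not $m_0$, which vanishes at $\xi=0$.

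The decisive gap is the range $(1/2,5/7]$, which is the real content of the theorem. There the equations $D_k=0$ are not triangular with pivots that are multiples of $m_0$ or of differences $\lambda_1-\lambda_2$. You have to isolate a closed subsystem and prove its determinant is nonzero:
\begin{itemize}
\item $2\times2$ on $(1/2,3/5)$;
\item $3\times3$ on $(3/5,2/3)$, with a different choice of rows when $\xi=0$ and $2q-3p=1$;
\item $4\times4$ on $(2/3,5/7)$;
\item the full $p\times p$ system at $3/5$ and $5/7$.
\end{itemize}
These determinants are differences of products of same-sign terms. For instance, on $(1/2,3/5)$ one gets $\Delta=m_{0,\beta}(x)m_{3,\beta}(\xi_{2q-3p-1})z^3\bigl[F_\beta(x)F_\beta(y)-1\bigr]$, where $F_\beta=m_{1,\beta}/m_{0,\beta}$, $x=\xi_{q-p-1}$ and $y=1-\xi_{2q-3p-1}$. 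So the sign of $m_0$ decides nothing, and the frame property genuinely fails at the neighbouring densities $1/2$ and $2/3$. A quantitative argument is unavoidable.

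The paper supplies it through four ingredients:
\begin{itemize}
\item monotonicity of $F_\beta$ in $\xi$ and in $\beta$, which gives $F_\beta>F_0$ with $F_0(x)=(1-x)(3x+1)/x^2$;
\item monotonicity in $\beta$ of $1/F_\beta(a)-1/F_\beta(b)$ for $a<b$;
\item the bound $\Psi_\beta(\xi)=m_{1,\beta}(\xi)/m_{0,\beta}(1-\xi)\ge1$;
\item the reflection $\Phi(z;\tfrac1p-\xi)=U(z)R\,\Phi(z^{-1};\xi)P$, which restricts to $\xi\le 1/(2p)$; the explicit inequalities actually use this restriction.
\end{itemize}
Only after these seed coefficients vanish are the remaining $c_r$ killed by the four-term recurrence along $m\mapsto mp-\delta \bmod q$. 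That propagation needs a separate step at $\xi=0$, $r=0$, where $m_{0,\beta}(0)=0$.
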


\begin{figure}[H]
    \centering
\includegraphics[width=0.8\textwidth,height=13cm]{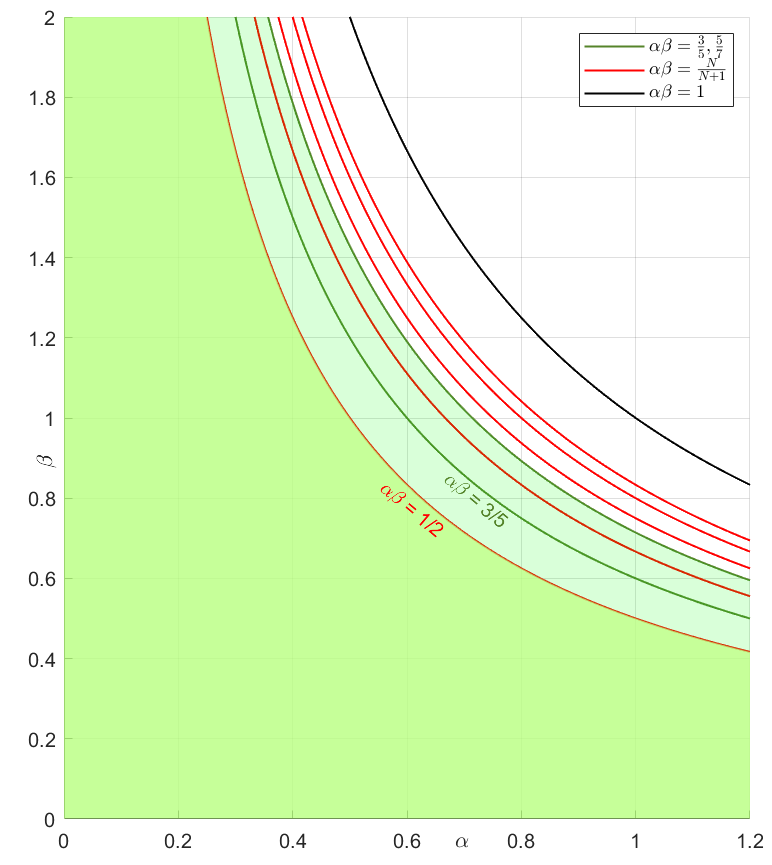}
    \caption{This plot illustrates that the Gabor system $\mathcal{G}(g, \alpha, \beta)$ fails to form a frame when $\alpha\beta = \frac{N}{N+1}$ (red) and $\alpha\beta=1$ (black). The green regions indicate the rational densities in Theorem \ref{p/qin(1/2,2/3)}, 
    $\alpha\beta=\frac{p}{q}\in \left(0,\frac{1}{2}\right)\cup\left(\frac{1}{2}, \frac{2}{3}\right)\cup \left(\frac{2}{3}, \frac{5}{7}\right],$
together with irrational densities $0<\alpha\beta<1$ in \cite{Yurii:rational:2023}.}
    \label{Conj Fig}
\end{figure}  

Our main contributions are summarized as follows.
\begin{enumerate}
\item For rational densities $\alpha\beta = p/q$, we leverage the Zak transform to map the infinite-dimensional Gabor frame condition directly to a finite-dimensional equivalence criterion. This fundamentally reduces the frame verification problem to evaluating the full-rank condition of an explicitly constructed polynomial matrix.

\item We prove that the Gabor system $\mathcal{G}(g,\alpha,\beta)$ forms a frame for every reduced rational density 
$$\alpha\beta=\frac{p}{q}\in \left(0,\frac{1}{2}\right)\cup\left(\frac12,\frac23\right)\cup \left(\frac23,\frac57\right].$$
\end{enumerate}

The paper is organized as follows. In Section \ref{section 2} we introduce the generating polynomial associated with the partial-fraction representation of $g$, derive its reflection symmetry, and obtain an explicit Zak transform formula. In Section \ref{section 3} we use the Zibulski–Zeevi representation to reduce the rational-density Gabor frame problem to the full-column-rank condition of a finite polynomial matrix and establish a symmetry reduction in the frequency variable. Section \ref{section 4} proves Theorem \ref{p/qin(1/2,2/3)} by analyzing the resulting finite systems in the different rational-density regimes. The auxiliary monotonicity and positivity estimates used in these arguments are collected in the Appendix.

\section{Generating Polynomial and Zak Transform}\label{section 2} 
For $s= 0,1,2,3$, we consider the function
\begin{align}\label{E : def m_S}
m_{s}(\xi) = \sum_{k = 1}^{4} a_{k} A_{k, s} e^{2 \pi \xi \omega_k}, \quad \xi \in \R
\end{align}
where the coefficients $a_k$ are defined in \eqref{defn a_k},
and the terms $A_{k, s}$ are defined as
\begin{align}\label{Defn Aks}
    A_{k, s}=
    (- 1)^s \ds\sum\limits_{\substack{1\le j_1 < \cdots < j_s\le 4\\ j_l \ne k}} e^{2 \pi(\omega_{j_1} + \cdots + \omega_{j_s})}.
\end{align}
We can easily verify that
\begin{align}\label{boundary m relation}
    m_0(0)=\sum_{k = 1}^{4} a_{k}=0,\qtq{and} m_{3}(\xi)=\sum_{k = 1}^{4} a_{k} A_{k, 3} e^{2 \pi \xi \omega_k}=-m_0(\xi-1).
\end{align} 
Let $\widetilde{A}_{s}=\sum\limits_{1\le j_1<\cdots<j_s\le 4} e^{2 \pi(\omega_{j_1}+\cdots+\omega_{j_s})}$ with $\widetilde{A}_{0}=1$. It is easy to check that
\begin{align}\label{Aks and tilde As}
    A_{ks} = e^{2\pi \omega_k}\, A_{k,s-1} + (-1)^s \widetilde{A}_{s}.
\end{align}
Using \eqref{Aks and tilde As}, we obtain
\begin{align}\label{ms recurrence}
    m_{s}(\xi)=m_{s-1}(\xi+1)+(-1)^s\widetilde{A}_{s}\,m_{0}(\xi).
\end{align}
We observe the identity
\begin{align*}
   \frac{\sum\limits_{s=0}^{3} m_{s}(\xi)\,z^s}{\prod\limits_{k=1}^{4}(1 - z e^{2 \pi \omega_k})}= \sum_{k=1}^{4} \frac{a_{k}e^{2 \pi\xi \omega_k}}{1 - z e^{2 \pi \omega_k}}.
\end{align*}
Let us define
\begin{align}\label{P_Ndefinition}
\mathcal{P}(z\, ;\xi)=\sum_{s=0}^{3} m_{s}(\xi)\,z^s=R(z)\, Q(z\,;\xi),
\end{align}
where
$$R(z)=\prod_{k=1}^{4} (1 - z e^{2 \pi \omega_k})\qtq{and}Q(z\,;\xi)=\sum_{k=1}^{4} \frac{a_{k}e^{2 \pi\xi \omega_k}}{1 - z e^{2 \pi \omega_k}}.$$
Here, $\mathcal{P}(z\, ;\xi)$ is a polynomial in $z$ of degree $3$ for each fixed $\xi\in [0,1)$. Setting $\gamma_k=e^{2\pi \omega_k}$ yields $\gamma_{2+k}=\gamma_k^{-1}$ allowing
    $$R(z)=\prod_{k=1}^2\left(1-z\gamma_k\right)\left(1-z\gamma_k^{-1}\right),$$
    which explicitly satisfies the self-reciprocal identity
    \begin{align}\label{reciprocal_R}
        z^{4} R(1/z)=\prod_{k=1}^2\left(z-\gamma_k\right)\left(z-\gamma_k^{-1}\right)=R(z).
    \end{align}

\begin{lem} \label{poly_reciprocity}
For $\xi \in [0,1]$, we have
\begin{equation}\label{eq:poly_reciprocity}
z^{3}\mathcal{P}\left(\frac{1}{z};\xi\right) = -\,\mathcal{P}(z;1-\xi).
\end{equation}
\end{lem}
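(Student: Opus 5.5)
Our approach is to use the factorization $\mathcal{P}(z;\xi)=R(z)\,Q(z;\xi)$ from \eqref{P_Ndefinition} together with the self-reciprocity \eqref{reciprocal_R} of $R$. This reduces the claimed identity to a reflection identity for the rational function $Q$ alone.

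\textbf{Step 1 (reduction to $Q$).} Since $z^{4}R(1/z)=R(z)$, we have
\begin{align*}
z^{3}\mathcal{P}(1/z;\xi)=z^{3}R(1/z)\,Q(1/z;\xi)=R(z)\,\frac{Q(1/z;\xi)}{z}.
\end{align*}
Therefore \eqref{eq:poly_reciprocity} follows once we prove $z^{-1}Q(1/z;\xi)=-Q(z;1-\xi)$ as rational functions. Both sides of \eqref{eq:poly_reciprocity} are polynomials, so it suffices to check this identity away from the finitely many poles.

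\textbf{Step 2 (computing $Q(1/z)$).} For each $k$ we rewrite
\begin{align*}
\frac{1}{z}\cdot\frac{1}{1-\gamma_k/z}=\frac{1}{z-\gamma_k}=-\frac{\gamma_k^{-1}}{1-z\gamma_k^{-1}}.
\end{align*}
This gives
\begin{align*}
z^{-1}Q(1/z;\xi)=-\sum_{k=1}^{4}\frac{a_k\,e^{2\pi\xi\omega_k}\,e^{-2\pi\omega_k}}{1-z\,e^{-2\pi\omega_k}}=-\sum_{k=1}^{4}\frac{a_k\,e^{-2\pi(1-\xi)\omega_k}}{1-z\,e^{-2\pi\omega_k}}.
\end{align*}

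\textbf{Step 3 (reindexing by the pole symmetry).} The map $k\mapsto k\pm 2$ is an involution of $\{1,2,3,4\}$ that sends $\omega_k$ to $-\omega_k$ by \eqref{defn w_k}, and it preserves $a_k$ because $a_1=a_3$ and $a_2=a_4$ by \eqref{defn a_k}. Relabeling the sum through this involution turns it into $\sum_k a_k e^{2\pi(1-\xi)\omega_k}/(1-z e^{2\pi\omega_k})=Q(z;1-\xi)$. Hence $z^{-1}Q(1/z;\xi)=-Q(z;1-\xi)$. Multiplying by $R(z)$ and applying Step 1 yields \eqref{eq:poly_reciprocity}.

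The only delicate point is the sign and exponent bookkeeping in Step 2, specifically the factor $\gamma_k^{-1}$ combining with $e^{2\pi\xi\omega_k}$ to produce the argument $\xi-1$. The essential structural input is the invariance of the coefficients $a_k$ under the reflection $\omega\mapsto-\omega$, which is exactly the statement $a_{k+2}=a_k$. As a consistency check, comparing the coefficient of $z^{3}$ on both sides gives $m_0(\xi)=-m_3(1-\xi)$. By \eqref{boundary m relation} this reads $m_0(\xi)=m_0(-\xi)$, which holds because $m_0(\xi)=\tfrac13(\cosh 2\pi\xi-\cosh4\pi\xi)$ is even.
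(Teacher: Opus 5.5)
Your proof is correct and follows the same route as the paper. Both arguments use $z^{4}R(1/z)=R(z)$ to reduce the lemma to the identity $Q(1/z;\xi)=-z\,Q(z;1-\xi)$, and both obtain that identity by rewriting each term as $\frac{1}{z-\gamma_k}=-\frac{\gamma_k^{-1}}{1-z\gamma_k^{-1}}$ and pairing terms via $\omega_{k+2}=-\omega_k$, $a_{k+2}=a_k$. The only difference is the order of steps: the paper splits the sum into $k=1,2$ and $k=3,4$ before rewriting, whereas you rewrite first and reindex by the involution afterwards.
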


\begin{pf}
Let $\gamma_k=e^{2\pi \omega_k}$. Now
\begin{align}
Q\left(\frac{1}{z};\xi\right) &= \sum_{k=1}^{4} \frac{a_k e^{2\pi \xi \omega_k}}{1 - \frac{1}{z}\gamma_k} = \sum_{k=1}^{4} \frac{z a_k e^{2\pi \xi \omega_k}}{z - \gamma_k}\nonumber\\
&= \sum_{k=1}^2 \frac{z a_k e^{2\pi \xi \omega_k}}{z - \gamma_k} + \sum_{k=1}^2 \frac{z a_k e^{-2\pi \xi \omega_k}}{z - \gamma_k^{-1}}\nonumber\\
&= \sum_{k=1}^2 \frac{z a_k \gamma_k^{-1} e^{2\pi \xi \omega_k}}{z\gamma_k^{-1} - 1} + \sum_{k=1}^2 \frac{z a_k \gamma_k e^{-2\pi \xi \omega_k}}{z\gamma_k - 1}\nonumber\\
&= -\sum_{k=1}^2 \frac{z a_k e^{-2\pi(1-\xi)\omega_k}}{1 - z\gamma_k^{-1}} - \sum_{k=1}^2 \frac{z a_k e^{2\pi(1-\xi)\omega_k}}{1 - z\gamma_k}.\nonumber
\end{align}
By definition, 
$$Q(z; 1-\xi) = \sum_{k=1}^2 \frac{a_k e^{2\pi(1-\xi)\omega_k}}{1 - z\gamma_k} + \sum_{k=1}^2 \frac{a_k e^{-2\pi(1-\xi)\omega_k}}{1 - z\gamma_k^{-1}}.$$
Therefore, 
\begin{align}\label{shift_reciprocal_Q}
    Q\left(\frac{1}{z};\xi\right) = - z Q(z; 1-\xi).
\end{align}
Combining \eqref{reciprocal_R} and \eqref{shift_reciprocal_Q}, we obtain
\begin{align*}
\mathcal{P}\left(\frac{1}{z};\xi\right) = R\left(\frac{1}{z}\right)Q\left(\frac{1}{z};\xi\right)& = \left(z^{-4}R(z)\right) \left(-z Q(z;1-\xi)\right)\\
&= -z^{-3}\mathcal{P}(z;1-\xi).
\end{align*}
Multiplying through by $z^{3}$ completes the proof.
\end{pf}

\begin{rem}\label{xi_1/2_rem}
Evaluating Lemma \ref{poly_reciprocity} at $\xi = 1/2$ directly establishes that $\mathcal{P}(z\,;1/2)$ is anti self-reciprocal.
\end{rem}

\begin{defn}
    The Zak transform of a function $g$ is defined by
\begin{align}\label{Zak}
\mathcal{Z}_g(t,\xi)=\sum_{m\in\mathbb{Z}}g(t-m)e^{2\pi im\xi},\quad (t,\xi)\in \R^2.
\end{align}
\end{defn}

\begin{prop}\label{P: zak and Q}
If $g$ is defined in \eqref{rational}, then
    $$\mathcal{Z}_{g}(t\,;\xi)=-2\pi i \, e^{2\pi i\xi t}\,Q(z\,;\xi),\qtq{for} 0<\xi<1,$$
    where $z=e^{2\pi it}$ and $Q(z\,;\xi)$ is defined in \eqref{P_Ndefinition}.
\end{prop}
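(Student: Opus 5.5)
The plan is to compute the Zak transform term by term from the partial-fraction representation \eqref{rational}, using a one-dimensional Fourier-series identity, and then to reassemble the terms into $Q(z;\xi)$.

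\textbf{Step 1 (a Fourier-series identity).} Fix a non-real $a\in\mathbb{C}$. I will show that for $0<\xi<1$,
\begin{align*}
\sum_{n\in\mathbb{Z}}\frac{e^{2\pi i n\xi}}{a+n}=\frac{2\pi i\,e^{-2\pi i a\xi}}{1-e^{-2\pi i a}},
\end{align*}
where the series is summed symmetrically. Define $F(\xi)=2\pi i\,e^{-2\pi i a\xi}/(1-e^{-2\pi i a})$ on $(0,1)$. The denominator does not vanish because $a\notin\mathbb{Z}$. The Fourier coefficients of $F$ are
\begin{align*}
\int_0^1F(\xi)e^{-2\pi i n\xi}\,d\xi=\frac{2\pi i}{1-e^{-2\pi i a}}\cdot\frac{1-e^{-2\pi i(a+n)}}{2\pi i(a+n)}=\frac{1}{a+n}.
\end{align*}
$F$ is smooth on $[0,1]$, so extended $1$-periodically it is piecewise smooth with a single jump at the integers. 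By Dirichlet's theorem, its symmetric Fourier partial sums converge to $F(\xi)$ at every interior point $0<\xi<1$.

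\textbf{Step 2 (apply to each pole).} Substituting $n=-m$ and writing $e^{-2\pi i n\xi}=e^{2\pi i n(1-\xi)}$, apply Step 1 with $\xi$ replaced by $1-\xi$ and with $a=w=t-i\omega_k$. This is non-real since $\omega_k\neq0$. It gives
\begin{align*}
\sum_{m\in\mathbb{Z}}\frac{e^{2\pi i m\xi}}{t-m-i\omega_k}
=\frac{2\pi i\,e^{2\pi i w\xi}}{e^{2\pi i w}-1}.
\end{align*}
Now substitute $e^{2\pi i w\xi}=e^{2\pi i\xi t}e^{2\pi\xi\omega_k}$ and $e^{2\pi i w}=z\gamma_k$, where $z=e^{2\pi i t}$ and $\gamma_k=e^{2\pi\omega_k}$. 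The $k$-th series then equals
\begin{align*}
-2\pi i\,e^{2\pi i\xi t}\,\frac{e^{2\pi\xi\omega_k}}{1-z\gamma_k}.
\end{align*}

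\textbf{Step 3 (summation and justification).} Multiply by $a_k$ and sum over $k=1,\dots,4$ to obtain $-2\pi i\,e^{2\pi i\xi t}Q(z;\xi)$. It remains to justify interchanging the sum over $k$ with the sum over $m$. The series defining $\mathcal{Z}_g$ converges absolutely, because $g(x)=O(|x|^{-3})$, so it equals the limit of its symmetric partial sums $\sum_{|m|\le M}$. Each symmetric partial sum splits as a finite sum over $k$ of the symmetric partial sums of the individual series, and each of those converges by Step 2.

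The only delicate point is the conditional convergence of the individual series in Steps 1--2, which is handled by the symmetric-summation argument. This is also where the restriction $0<\xi<1$ enters: at $\xi\in\mathbb{Z}$ the periodic extension of $F$ jumps, and Dirichlet's theorem gives the average of the one-sided limits rather than $F(\xi)$.
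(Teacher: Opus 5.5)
Your proof is correct and follows the same route as the paper: expand $g$ by partial fractions, apply the identity $\sum_{m\in\mathbb{Z}}\frac{e^{2\pi i m\xi}}{\nu-m}=\frac{-2\pi i\,e^{2\pi i\nu\xi}}{1-e^{2\pi i\nu}}$ with $\nu=t-i\omega_k$, and resum over $k$ to obtain $Q(z;\xi)$. The paper simply cites this identity as standard. You instead derive it from the Fourier coefficients of $e^{-2\pi i a\xi}$ on $(0,1)$ and justify interchanging the $k$- and $m$-sums via symmetric partial sums, which supplies details the paper omits.
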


\begin{proof}
    For $\nu\notin \Z$ and $0<\xi<1$, we utilize the standard identity 
    \begin{align}\label{infinite_sum}
        \sum_{m\in\Z}\frac{e^{2\pi i m\xi}}{\nu-m}=\frac{-2\pi i \,e^{2\pi i\nu\xi}}{1-e^{2\pi i\nu}}.
    \end{align}
Let $z=e^{2\pi it}$. From the definition of the Zak transform of $g$, we have
\begin{align*}
    \mathcal{Z}_{g}(t\,;\xi)&=\sum_{k=1}^{4} a_k\sum_{m\in \Z}\frac{e^{2\pi im\xi}}{t-m-i\omega_k}=-2\pi i\sum_{k=1}^{4} a_k \frac{e^{2\pi i\xi(t-i\omega_k)}}{1-e^{2\pi i(t-i\omega_k)}}=-2\pi i \, e^{2\pi i\xi t}\, Q(z\,;\xi).
\end{align*}
\end{proof}

\begin{rem}
    For the case $\xi=0$, we use the following identities. $$\sum_{m\in\mathbb Z}\frac1{z-m}=
\pi\cot(\pi z),\quad \cot(\pi u)=i\frac{e^{2\pi i u}+1}{e^{2\pi i u}-1}, \qtq{and} \sum_{k=1}^{4} a_k=0.$$
Hence,
    \begin{align*}
        \mathcal Z_g(t,0)=
-\pi i
\sum_{k=1}^{4}
a_k
\frac{1+z e^{2\pi\omega_k}}
{1-z e^{2\pi\omega_k}}=
-\pi i\sum_{k=1}^{4}
a_k
\left(1+\frac{2z e^{2\pi\omega_k}}
{1-z e^{2\pi\omega_k}}\right) =-2\pi i\, Q(z\,;0).
    \end{align*}
\end{rem}

\section{Frame Property for rational window}\label{section 3}
Now 
\begin{align}
    m_0(\xi)=\sum_{k=1}^{4}a_k e^{2\pi \xi \omega_k}&=
2\displaystyle\sum_{k=1}^{2} a_k \cosh(2 \pi \xi k)\nonumber\\
&=\frac{1}{3}(\cosh(2\pi\xi)-\cosh(4\pi\xi))<0,~\text{for }\xi>0.
\end{align}
Since $m_0(\xi)\ne 0$ for $\xi>0$ and $\omega_k\ne \omega_l$, then the Gabor system $\mathcal{G}(g, \alpha,\beta)$ is a frame for $L^2(\R)$ whenever $\alpha\beta<1$ is irrational (see \cite{Yurii:rational:2023}). Furthermore, Daubechies \cite{Daubechies} established that $\left\{(\alpha,\beta):~\alpha\beta\le \frac{1}{4}\right\}$ is safely contained within $\mathcal{F}(g)$. In this paper, therefore, we focus primarily on rational lattices.

It is well-known that if $\mathcal{G}(g, \alpha, \beta)$ forms a Gabor frame, then scaling the argument of $g$ by $ 1/\beta $ gives another frame
$$\mathcal{G}\left(\beta^{-1/2}g\left(\frac{\cdot}{\beta}\right), \alpha\beta, 1\right)$$
  with the same bounds.

Let us define $g_\beta(t)=g(t/\beta)$.
Let $\mathbf M(t\,;\xi)$ be a $q\times p$ Zibulski-Zeevi matrix \cite{zibulski1997analysis} for $g_\beta$ with entries
\begin{align}\label{M_matrix}
    \mathbf M_{sr}(t\,;\xi)=\mathcal Z_{g_\beta}(t-\alpha\beta s;\xi+r/p)
\end{align}
for $(t\,;\xi)\in [0,1)\times[0,1/p)$. We define an associated $q \times p$ matrix $\mathbf B(z;\xi)$ on $(z\,;\xi)\in \mathbb{T}\times[0,1/p)$ with entries 
$$(\mathbf B (z;\xi))_{sr} = e^{-2\pi i\frac{rs}{q}}Q\left(ze^{-2\pi i\alpha\beta s};\xi+\frac{r}{p}\right),$$ 
where
$$Q_\beta(z;\xi)=\sum_{k=1}^4 \frac{a_k e^{2\pi \xi \beta \omega_k}}{1-ze^{2\pi \beta \omega_k}}$$
for $0\le s\le q-1$ and $0\le r\le p-1$.
From Proposition \ref{P: zak and Q}, 
$$\mathbf M_{sr}(t\,;\xi)=
\Bigl(
-2\pi i\beta\,
e^{2\pi i\xi(t-\alpha\beta s)}
e^{2\pi i(r/p)t}
\Bigr)
\,(\mathbf B (z\,;\xi))_{sr},\quad z=e^{2\pi it}.
$$
Therefore,
$$\mathbf M(t\,;\xi)=
\operatorname{diag}(D_s(t\,;\xi))\,
\mathbf{B}(z\,;\xi)\,
\operatorname{diag}(E_r(t)),
$$
where
$$D_s(t\,;\xi)=-2\pi i\beta\,e^{2\pi i\xi(t-\alpha\beta s)}
\qtq{and} E_r(t)=
e^{2\pi i(r/p)t}.$$
Since both diagonal matrices are invertible 
$$\rank \mathbf{M}(t\,;\xi)=\rank \mathbf{B}(z\,;\xi).$$

Now define
$$R_\beta(z)=\prod_{k=1}^{2}
(1-ze^{2\pi\beta k})
(1-ze^{-2\pi\beta k}),$$
and
$$\mathcal{P}(z;\xi)=
R_\beta(z)Q_\beta(z;\xi)=\sum_{l=0}^{3}m_{l,\beta}(\xi)z^l.$$

Finally, we introduce a $q\times p$ matrix $ \Phi(z\,; \xi)$
\begin{align}\label{final Phi_alpha matrix}
    &= \begin{bmatrix}
    \mathcal{P}(z\,;\xi)& \mathcal{P}\left(z\,;\xi+\frac{1}{p}\right)& \cdots& \mathcal{P}\left(z\,;\xi+\frac{p-1}{p}\right)\\[0.25em]
   \mathcal{P}(ze^{-2\pi i\alpha\beta}\,;\xi)& e^{-\frac{2\pi i}{q}}\mathcal{P}\left(ze^{-2\pi i\alpha\beta}\,;\xi+\frac{1}{p}\right)& \cdots&e^{-2\pi i\frac{p-1}{q}}\mathcal{P}\left(ze^{-2\pi i\alpha\beta}\,;\xi+\frac{p-1}{p}\right)\\[0.25em]
    \vdots&\vdots&\cdots&\vdots\\[0.25em]
    \mathcal{P}(ze^{-2\pi i\alpha\beta(q-1)}\,;\xi)& e^{\frac{2\pi i}{q}}\mathcal{P}\left(ze^{-2\pi i\alpha\beta(q-1)}\,;\xi+\frac{1}{p}\right)& \cdots&e^{2\pi i\frac{p-1}{q}}\mathcal{P}\left(ze^{-2\pi i\alpha\beta(q-1)}\,;\xi+\frac{p-1}{p}\right)
\end{bmatrix}.
\end{align}
Consequently, from \eqref{P_Ndefinition}, we have
\begin{align*}
(\Phi(z\,;\xi))_{sr}&=e^{-2 \pi i \frac{r}{q}s}\,\mathcal{P}\left(ze^{-2 \pi i \alpha\beta s}\, ;\xi+\frac{r}{p}\right),\\
&=e^{-2\pi i\frac{r}{q}s}
R_\beta(ze^{-2\pi i\alpha \beta s})\,Q_\beta\left(
ze^{-2\pi i\alpha\beta s}\,;
\xi+\frac r p
\right) 
\end{align*}
for $z=e^{2\pi it}$. Since the zeros of $R_\beta$ are $e^{\pm2\pi\beta k}$,
none lie on $\mathbb T$. Hence for $|z|=1$,
$$R_\beta(z_s)\neq0,\quad z_s=ze^{-2\pi i\alpha\beta s}$$
so
$$\operatorname{rank}\mathbf{M}(t,\xi)=\operatorname{rank}
\Phi(z;\xi).$$

The following theorem establishes that the frame property for the Gabor system generated by $g$ is equivalent to the full-rank condition of the Zibulski--Zeevi matrix \eqref{M_matrix} and its equivalent matrix representations \eqref{final Phi_alpha matrix}.
\begin{theorem}\label{rank equiv}
    Let $g$ be a window function in \eqref{window}. Then the Gabor system $\mathcal{G}(g,\alpha,\beta)$ forms a frame for $\alpha\beta=\frac{p}{q}$ if and only if 
    $$\rank \mathbf{M}(t\,;\xi)=\rank \mathbf{B} (z\,;\xi)=\rank \Phi(z\,;\xi)=p,$$
    for $(z\,,\xi)\in \mathbb{T}\times \left[0,1/p\right)$.
\end{theorem}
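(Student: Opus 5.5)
The plan is to reduce everything to the classical Zibulski--Zeevi criterion and then transfer the rank condition through the factorizations already obtained above. First, by the dilation remark preceding the definition of $g_\beta$, $\mathcal{G}(g,\alpha,\beta)$ is a frame if and only if $\mathcal{G}(\beta^{-1/2}g_\beta,\alpha\beta,1)$ is a frame, with the same bounds. With $\alpha\beta=p/q$, the Zibulski--Zeevi theorem \cite{zibulski1997analysis} says that $\mathcal{G}(h,p/q,1)$ is a frame with bounds $A,B$ if and only if the singular values of the $q\times p$ matrix $\mathbf M(t;\xi)$ (up to the normalizing constant $\sqrt{p}$ or similar) lie in $[\sqrt{A},\sqrt{B}]$ for a.e.\ $(t,\xi)\in[0,1)\times[0,1/p)$. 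Hence the frame property is equivalent to $\sigma_{\min}(\mathbf M(t;\xi))$ being essentially bounded below by a positive constant and $\sigma_{\max}$ being bounded above.

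Next I will use regularity of $g$. Since $g\in W(L^\infty,\ell^1)$ and, more to the point, Proposition \ref{P: zak and Q} gives $\mathcal Z_{g_\beta}$ explicitly as $e^{2\pi i\xi t}$ times the rational function $Q_\beta$, whose poles $e^{\mp 2\pi\beta k}$ lie off $\mathbb T$, each entry of $\mathbf M$ is bounded. The entries are continuous in $t$, and continuous in $\xi$ except for the quasi-periodic jump across $\xi\in\mathbb Z$. The upper bound is therefore automatic. For the lower bound I argue by compactness. The function $(t,\xi)\mapsto\sigma_{\min}(\mathbf M(t;\xi))$ is continuous in $t$. In $\xi$ it is continuous, because the quasi-periodicity $\mathcal Z(t,\xi+1)=e^{2\pi i t}\mathcal Z(t,\xi)$ together with the cyclic permutation of columns under $\xi\mapsto\xi+1/p$ makes the jump only a multiplication by unitary diagonal and permutation matrices, which preserves singular values. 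So $\sigma_{\min}$ extends to a continuous function on the torus $\mathbb T\times[0,1/p]$ with endpoints identified. If it is positive everywhere, it attains a positive minimum, giving the lower frame bound. Conversely, if $\operatorname{rank}\mathbf M(t_0;\xi_0)<p$ at some point, continuity forces $\sigma_{\min}$ to be small on a neighbourhood of positive measure, and no lower bound exists. This yields: frame $\iff\operatorname{rank}\mathbf M=p$ everywhere.

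Finally, the equalities of ranks were established just before the statement. We have $\mathbf M=\operatorname{diag}(D_s)\,\mathbf B\,\operatorname{diag}(E_r)$ with invertible diagonal factors. Also $\Phi$ is obtained from $\mathbf B$ by multiplying row $s$ by $R_\beta(ze^{-2\pi i\alpha\beta s})$, which is nonzero on $\mathbb T$. The point $\xi=0$ is covered by the remark computing $\mathcal Z_g(t,0)=-2\pi i\,Q(z;0)$, so the identity holds on all of $[0,1/p)$.

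The main obstacle is the continuity/compactness step. I must check carefully that the boundary $\xi\to 1/p$ glues correctly, so that the column permutation and the phases $e^{-2\pi i rs/q}$ give a genuine unitary equivalence. I must also check that there is no issue at $\xi=0$, where the series for the Zak transform converges only conditionally. I would handle the latter by citing that for $g\in W(L^\infty,\ell^1)$, or directly from the explicit rational formula, the Zak transform is continuous away from $\xi\in\mathbb Z$ and has the stated quasi-periodic limits.
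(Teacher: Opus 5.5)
Your proposal is correct and takes the same route as the paper. You dilate to $\mathcal{G}(\beta^{-1/2}g_\beta,p/q,1)$, apply the Zibulski--Zeevi criterion, and use the factorizations $\mathbf M=\operatorname{diag}(D_s)\,\mathbf B\,\operatorname{diag}(E_r)$ and $\Phi=\operatorname{diag}\bigl(R_\beta(ze^{-2\pi i\alpha\beta s})\bigr)\,\mathbf B$ with $R_\beta\neq0$ on $\mathbb T$; your continuity/compactness step, which the paper leaves implicit, is exactly what converts the a.e.\ singular-value bounds into the pointwise rank condition.

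One correction to that step. With the paper's convention, $\mathcal Z_g$ is $1$-periodic in $\xi$ and quasi-periodic in $t$, namely $\mathcal Z_g(t+1,\xi)=e^{2\pi i\xi}\mathcal Z_g(t,\xi)$, not $\mathcal Z(t,\xi+1)=e^{2\pi i t}\mathcal Z(t,\xi)$ as you wrote. Moreover $g(x)=O(|x|^{-3})$, so the Zak series converges absolutely and uniformly and $\mathcal Z_{g_\beta}$ is jointly continuous on $\mathbb R^2$: there is no jump at $\xi\in\mathbb Z$ and no conditional convergence at $\xi=0$. The gluing is then simply $\mathbf M(t;1/p)=\mathbf M(t;0)P$ with $P$ a cyclic permutation, together with $\mathbf M(t+1;\xi)=\mathbf M(t;\xi)\operatorname{diag}\bigl(e^{2\pi i(\xi+r/p)}\bigr)$.
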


The critical density $\alpha\beta=1$ is exceptional. In this case, we obtain from \eqref{final Phi_alpha matrix} that 
$$\Phi_{1}(z\,;\xi)=\mathcal{P}(z\,;\xi).$$ 
Remark \ref{xi_1/2_rem} shows that
$$\mathcal{P}(1\,;1/2)=0.$$
Consequently, therefore $\mathcal{G}(g, 1, 1)$ is not a frame.

\begin{rem}
Although $\Phi(z,;\xi)$ is rank-equivalent to the classical Zibulski--Zeevi matrix, it is adapted to the rational window $g$ through its partial-fraction expansion. The resulting polynomial representation reformulates the Gabor frame condition as the invertibility of a finite-dimensional structured matrix whose entries are explicitly determined by the coefficients of the exponential polynomials $\mathcal P(z\,;\xi)$. As shown in Section \ref{section 4}, this formulation naturally reveals sparse staircase structures that can be analyzed by determinant identities, Schur complements, and finite recurrences. These structural properties provide an effective algebraic framework for proving the invertibility of the polynomial matrix at rational densities considered in this paper.
\end{rem}

To explicitly verify the full-rank condition of Theorem \ref{rank equiv}, we must analyze the algebraic properties of its associated matrix $\Phi(z\,;\xi)$. By leveraging the polynomial reflection identities in \eqref{eq:poly_reciprocity}, the following lemma demonstrates a fundamental structural symmetry in the frequency parameter $\xi$. Consequently, it is sufficient to analyze the $\Phi(z\,;\xi)$ matrix only for $\xi\in [0, \frac{1}{2p}]$.

\begin{lem}\label{symmetry_singular_values}
Let $\alpha\beta=p/q$ and let $\Phi(z;\xi)$ be the matrix associated
with the window $g$. Then, for every $z\in\mathbb T$ and
$\xi\in[0,1/p)$, there exist permutation matrices $R$ and $P$ and a diagonal unitary matrix $U(z)$ such that
\begin{equation}\label{Phi_reflection}
\Phi\left(z;\frac1p-\xi\right)
=
U(z)\,R\,\Phi(z^{-1};\xi)\,P .
\end{equation}
Consequently,
$$
\operatorname{rank}\Phi\left(z;\frac1p-\xi\right)
=
\operatorname{rank}\Phi(z^{-1};\xi).
$$
In particular, it is enough to verify the full-rank condition for $\xi\in\left[0,\frac1{2p}\right].$
\end{lem}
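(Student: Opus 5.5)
The plan is to apply the reflection identity of Lemma \ref{poly_reciprocity} (in its $\beta$-scaled form for $\mathcal P = R_\beta Q_\beta$) entry by entry, and then show that the resulting reindexing is a row and column permutation of $\Phi(z^{-1};\xi)$ up to unimodular factors. The scaled version $z^{3}\mathcal P(1/z;\eta) = -\mathcal P(z;1-\eta)$ for $\eta\in[0,1]$ follows from the same computation as in Lemma \ref{poly_reciprocity}. That computation uses only $\gamma_{2+k}=\gamma_k^{-1}$, $a_{2+k}=a_k$ and the self-reciprocity of $R_\beta$, all of which persist after replacing $\omega_k$ by $\beta\omega_k$, with $1-\eta$ playing the role of the reflected frequency. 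I will also need quasi-periodicity of $\mathcal P$ in $\xi$. The relation $m_s(\xi+1)$ versus $m_{s+1}$ from \eqref{ms recurrence} gives $\mathcal P(z;\xi+1) = z^{-1}\bigl(\mathcal P(z;\xi)-m_0(\xi)R(z)\bigr)$ type identities. Rather than use these, I will keep all frequency arguments inside $[0,1]$ and avoid the shift.

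For the entry $(s,r)$ of $\Phi(z;\tfrac1p-\xi)$, the frequency argument is $\tfrac1p-\xi+\tfrac rp = 1-\bigl(\xi+\tfrac{p-1-r}{p}\bigr)$. Setting $r' = p-1-r$ and $w = z e^{-2\pi i \alpha\beta s}$, the reflection identity gives
$$\mathcal P\Bigl(w;\tfrac1p-\xi+\tfrac rp\Bigr) = -w^{3}\,\mathcal P\Bigl(w^{-1};\xi+\tfrac{r'}{p}\Bigr),$$
with $w^{-1} = z^{-1}e^{2\pi i\alpha\beta s} = z^{-1}e^{-2\pi i \alpha\beta s'}$. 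Here $s' \equiv -s \pmod q$; this step uses $\alpha\beta=p/q$, so that $e^{2\pi i \alpha\beta q}=1$. Hence entry $(s,r)$ of $\Phi(z;\tfrac1p-\xi)$ equals
$$-w^3\, e^{-2\pi i rs/q}\, e^{2\pi i r' s'/q}\,\bigl(\Phi(z^{-1};\xi)\bigr)_{s'r'}.$$

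The key step, and the one I expect to be the main bookkeeping obstacle, is checking that the phase $e^{-2\pi i rs/q}e^{2\pi i r's'/q}$ factors as (a function of $s$) times (a function of $r$), so that it can be absorbed into $U(z)$ and a column diagonal. With $r'=p-1-r$ and $s'\equiv -s$, we get $r's' \equiv -(p-1)s + rs \pmod q$. Therefore $-rs + r's' \equiv -(p-1)s$, which depends only on $s$. Together with $-w^3 = -z^3 e^{-6\pi i\alpha\beta s}$, this yields a diagonal unitary row factor $U(z)_{ss} = -z^3 e^{-6\pi i ps/q} e^{-2\pi i (p-1)s/q}$. Here $R$ is the permutation $s\mapsto -s \bmod q$ of rows and $P$ is the reversal $r\mapsto p-1-r$ of columns. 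One must verify that the phase $e^{-2\pi i rs/q}$ in the definition of $\Phi$ is well defined modulo $q$ in $s$, which it is. No extra column diagonal is needed, consistent with the statement.

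Since $U(z)$ is invertible and $R,P$ are permutations, the ranks agree. Finally, as $z$ ranges over $\mathbb T$ so does $z^{-1}$, so full rank for all $(z,\xi)\in\mathbb T\times[0,\tfrac1{2p}]$ implies full rank for $\xi\in[\tfrac1{2p},\tfrac1p)$ via $\xi\mapsto \tfrac1p-\xi$. The endpoint $\xi=0$ is covered directly, and the identity's use of $\tfrac1p-\xi$ up to the value $\tfrac1p$ is handled by the reflection at $\eta\in[0,1]$.
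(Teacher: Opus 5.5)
Your proposal is correct and follows the paper's proof essentially step for step. You apply the scaled reflection identity of Lemma \ref{poly_reciprocity} entrywise, reindex by $r'=p-1-r$ and $s'\equiv -s \pmod q$, and check that the leftover phase $e^{-2\pi i rs/q}e^{2\pi i r's'/q}=e^{-2\pi i(p-1)s/q}$ depends only on $s$, which gives exactly the paper's row factor $u_s(z)=-z_s^3e^{-2\pi i(p-1)s/q}$. Your remark that the reflection identity survives the substitution $\omega_k\mapsto\beta\omega_k$ (since it only uses $\gamma_{2+k}=\gamma_k^{-1}$, $a_{2+k}=a_k$ and the self-reciprocity of $R_\beta$) covers a point the paper uses without comment.
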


\begin{proof}
For $0\le s\le q-1$ and $0\le r\le p-1$, put
$$
z_s=ze^{-2\pi i ps/q}.
$$
By definition,
\begin{equation}\label{Phi_entry_sym}
\left(
\Phi\left(z;\frac1p-\xi\right)
\right)_{sr}
=
e^{-2\pi i rs/q}
P_\beta\left(
z_s;\frac1p-\xi+\frac rp
\right).
\end{equation}

For each $r$, define $r'=p-1-r.$
Then
\begin{align*}
\frac1p-\xi+\frac rp
=
\frac{r+1}{p}-\xi&=
1-\left(\xi+\frac{p-r-1}{p}\right)\\
&=
1-\left(\xi+\frac{r'}p\right).
\end{align*}
From Lemma \ref{poly_reciprocity}, we have
$$
P_\beta\left(
z_s;\frac1p-\xi+\frac rp
\right)
=
-z_s^3
P_\beta\left(
z_s^{-1};\xi+\frac{r'}p
\right).
$$
Now \eqref{Phi_entry_sym} gives
\begin{equation}\label{Phi_reflected_entry}
\left(
\Phi\left(z;\frac1p-\xi\right)
\right)_{sr}
=
-z_s^3e^{-2\pi i rs/q}
P_\beta\left(
z_s^{-1};\xi+\frac{r'}p
\right).
\end{equation}

We now reflect the row index as well. Define
$$
s'=-s\pmod q.
$$
Thus
$$
s'=
\begin{cases}
0,&s=0,\\
q-s,&1\le s\le q-1.
\end{cases}
$$
Since
$$
z_s^{-1}
=
z^{-1}e^{2\pi i ps/q}
=
z^{-1}e^{-2\pi i ps'/q},
$$
the polynomial appearing on the right-hand side of
\eqref{Phi_reflected_entry} is precisely the one occurring in the
$(s',r')$ entry of $\Phi(z^{-1};\xi)$. Moreover,
$$
e^{-2\pi i r's'/q}
=
e^{2\pi i r's/q},\qtq{and} r+r'=p-1,
$$
we have
\begin{align*}
e^{-2\pi i rs/q}
=e^{-2\pi i(p-1)s/q}
e^{-2\pi i r's'/q}.
\end{align*}
Therefore,
\begin{align}
\left(
\Phi\left(z;\frac1p-\xi\right)
\right)_{sr}
=&
u_s(z)\,
e^{-2\pi i r's'/q}
P_\beta\left(
z^{-1}e^{-2\pi i ps'/q};
\xi+\frac{r'}p
\right),
\label{entry_final}
\end{align}
where
$$
u_s(z)
=
-z_s^3e^{-2\pi i(p-1)s/q}.
$$
Since $|u_s(z)|=1$,
$$
U(z)=
\operatorname{diag}
\big(
u_0(z),u_1(z),\ldots,u_{q-1}(z)
\big)
$$
is unitary. Let $R$ be the $q\times q$ permutation matrix corresponding to
$$
s\longmapsto -s\pmod q,
$$
and let $P$ be the $p\times p$ permutation matrix corresponding to
$$
r\longmapsto p-1-r.
$$
Then \eqref{entry_final} is exactly the matrix identity
$$
\Phi\left(z;\frac1p-\xi\right)
=
U(z)\,R\,\Phi(z^{-1};\xi)\,P.
$$
Since $U(z)$, $R$, and $P$ are invertible,
$$
\operatorname{rank}
\Phi\left(z;\frac1p-\xi\right)
=
\operatorname{rank}\Phi(z^{-1};\xi).
$$
Therefore, it is sufficient to establish the full-rank condition for
$$(z,\xi)\in\mathbb T\times\left[0,\frac1{2p}\right].$$
\end{proof}

\section{Proof of Theorems}\label{section 4}

To prove Theorem \ref{p/qin(1/2,2/3)}, we first determine the coefficients of the polynomial $\mathcal{P}(z\,;\xi)$. Using \eqref{boundary m relation} and \eqref{ms recurrence}, we obtain 
\begin{align*}
    m_{0,\beta}(\xi)&= \frac{1}{6}\left( e^{2\pi\beta\xi}- e^{4\pi\beta\xi} +e^{-2\pi\beta\xi} - e^{-4\pi\beta\xi}\right)=\frac{1}{3}(\cosh(2\pi\beta\xi)-\cosh(4\pi\beta\xi))\\
    m_{1,\beta}(\xi)&= m_{0,\beta}(\xi+1)- (e^{2\pi\beta} + e^{4\pi\beta} + e^{-2\pi\beta} + e^{-4\pi\beta}) m_{0,\beta}(\xi)\\
    &=m_{0,\beta}(\xi+1)- 2(\cosh(2\pi\beta)+\cosh(4\pi\beta)) m_{0,\beta}(\xi)\\
    m_{2,\beta}(\xi)
&=m_{1,\beta}(\xi+1)+2(\cosh(6\pi\beta)+\cosh(2\pi\beta)+1)m_{0,\beta}(\xi)\\
    m_{3,\beta}(\xi)& = -m_{0,\beta}(\xi-1).
\end{align*}
It is straightforward to verify that $m_{0,\beta}(\xi)$ is an even negative function for $\xi > 0$ and $m_{1,\beta}(\xi) < 0$ for $\xi \in [0, 1)$. Similarly, $m_{2,\beta}(\xi) > 0$ for $\xi \in (0, 1]$ and $m_{3,\beta}(\xi) > 0$ for $\xi \in [0, 1)$. Furthermore, Lemma \ref{poly_reciprocity} provides
\begin{equation}\label{reflection-symmetry}
m_{3,\beta}(\xi)=-m_{0,\beta}(1-\xi) \quad \text{and} \quad m_{2,\beta}(\xi)=-m_{1,\beta}(1-\xi).
\end{equation}
Recall that the matrix $\Phi(z; \xi)$ defined in \eqref{final Phi_alpha matrix} is a $q \times p$ matrix. By Lemma \ref{symmetry_singular_values}, it suffices to prove that
   \begin{align}\label{kerPhi_g1}
       \ker\Phi(z; \xi) = \{\mathbf{0}\}, \text{ for all }(z, \xi) \in \mathbb{T} \times \left[0, \frac{1}{2p}\right].
   \end{align}
Suppose $\mathbf{c} = [c_0, c_1, \dots, c_{p-1}]^T \in \mathbb{C}^{p}$ is a vector in the kernel, such that $\Phi(z\,; \xi)\mathbf{c} = \mathbf{0}$. Therefore,
    $$\sum_{r=0}^{p-1} c_r e^{-2\pi i \frac{rs}{q}} \mathcal{P}(z_s; \xi_r) = 0, \quad \text{for } s = 0, 1, \dots, q-1.$$
Using \eqref{P_Ndefinition} to expand the polynomial and substituting $z_s = z e^{-2\pi i \frac{ps}{q}}$, we can rewrite this as
\begin{align}\label{system_of_eqns}
     \sum_{l=0}^3 z^l \left( \sum_{r=0}^{p-1} c_r e^{-2\pi i s \left( \frac{r+pl}{q} \right)} m_{l,\beta}(\xi_r) \right) = 0. 
\end{align}
Define
\begin{align}\label{D_k equation}
     D_k(z\,;\xi) = \sum_{\substack{0\le r\le p-1,\,0\le l\le 3\\ r+pl\equiv k(\text{mod }q)}} c_r m_{l,\beta}(\xi_r) z^l.
\end{align}
Then the system \eqref{system_of_eqns} becomes
$$\sum_{k=0}^{q-1} D_k(z\,;\xi) e^{-2\pi i sk/q}=0, \quad s=0,\dots,q-1.$$
Since the matrix $\big[e^{-2\pi i sk/q}\big]_{s,k=0}^{q-1}$ is invertible, it immediately follows that
\begin{align}\label{D_k=0}
    D_k(z\,;\xi) = 0
\end{align}
in \eqref{D_k equation} for all $k=0, 1, \dots, q-1$.

Now we divide the proof of Theorem \ref{p/qin(1/2,2/3)} into three cases, according to the structure of the associated $\Phi(z\,;\xi)$ matrix. Specifically, we consider rational densities in the intervals
$$\left(0,\frac{1}{2}\right),\left(\frac12,\frac35\right),
\left(\frac35,\frac23\right),\left(\frac23,\frac57\right),\quad\text{and}\quad \alpha\beta=\frac35, \frac{5}{7}$$
separately.

\begin{proof}[\textbf{Proof of Theorem \ref{p/qin(1/2,2/3)} for $\bm{0<\frac{p}{q}<\frac12}$}]
Let $\alpha\beta=\frac{p}{q}<\frac12$, where $\gcd(p,q)=1$. From \eqref{D_k=0}, we have
$$
D_k(z;\xi)=0,\qquad k=0,\ldots,q-1,
$$
where
$$
D_k(z;\xi)
=
\sum_{\substack{0\le r\le p-1,\;0\le l\le3\\
r+pl\equiv k\;(\mathrm{mod}\ q)}}
c_r m_{l,\beta}(\xi_r)z^l,
\qquad
\xi_r=\xi+\frac rp .
$$
By Lemma \ref{symmetry_singular_values}, it is enough to consider
$$
0\le \xi\le \frac{1}{2p}.
$$

Since $\frac{p}{q}<\frac12$, we have $q>2p$. let
$$
d:=q-2p\ge1.
$$
From the system $D_k(z;\xi)=0$, we select the $p$ equations indexed by
$$
k=p+j,\qquad j=0,\ldots,p-1.
$$

For a fixed $j$, consider the congruence
$$
r+pl\equiv p+j\pmod q,
\qquad
0\le r\le p-1,\quad 0\le l\le3.
$$
If $l=0$, then $r=p+j$, which is impossible since $r\le p-1$. If $l=1$, then
$$
r+p\equiv p+j\pmod q,
$$
and hence $r=j$. If $l=2$, then
$$
r+2p\equiv p+j\pmod q,
$$
so
$$
r\equiv j-p\equiv q-p+j\pmod q.
$$
Since $q>2p$, we have
$$
q-p+j>p-1,
$$
and therefore there is no admissible $r\in\{0,\ldots,p-1\}$. Finally, if $l=3$, then
$$
r+3p\equiv p+j\pmod q,
$$
and hence
$$
r\equiv j-2p\equiv q-2p+j=j+d\pmod q.
$$
Since $j+d<q$, this gives an admissible index precisely when
$j+d\le p-1.$

Consequently,
\[
D_{p+j}(z;\xi)
=
c_jm_{1,\beta}(\xi_j)z
+
\mathbf{1}_{\{j+d\le p-1\}}\,
c_{j+d}m_{3,\beta}(\xi_{j+d})z^3
=0,
\qquad j=0,\ldots,p-1.
\]
Moreover, since $z\in\mathbb T$ and
\[
0\le \xi_j
=
\xi+\frac{j}{p}
\le
\frac{1}{2p}+\frac{p-1}{p}
<1,
\]
we have
\[
m_{1,\beta}(\xi_j)<0,
\qquad j=0,\ldots,p-1.
\]

We now proceed by descending induction on $j$. For $j=p-1$, we have
$j+d>p-1$ since $d\ge1$, and hence the second term in
$D_{2p-1}(z;\xi)$ is absent. Therefore,
\[
c_{p-1}m_{1,\beta}(\xi_{p-1})z=0,
\]
which implies $c_{p-1}=0$. Suppose now that, for some $0\le j\le p-2$,
\[
c_{j+1}=c_{j+2}=\cdots=c_{p-1}=0.
\]
Since $d\ge1$, we have $j+d>j$. If $j+d\le p-1$, then
$c_{j+d}=0$ by the induction hypothesis; if $j+d>p-1$, the
corresponding term in $D_{p+j}(z;\xi)$ is absent. Thus, in either
case,
\[
D_{p+j}(z;\xi)
=
c_jm_{1,\beta}(\xi_j)z
=0.
\]
Since $m_{1,\beta}(\xi_j)\neq0$ and $z\in\mathbb T$, it follows that
$c_j=0$. Hence, by descending induction,
\[
c_0=c_1=\cdots=c_{p-1}=0.
\]
Therefore,
\[
\ker\Phi(z;\xi)=\{\bm 0\},
\qquad
(z,\xi)\in
\mathbb T\times
\left[0,\frac{1}{2p}\right].
\]
Hence, the Gabor system
$\mathcal{G}(g,\alpha,\beta)$ is a frame whenever
$$
0<\alpha\beta=\frac pq<\frac12.
$$
\end{proof}

\begin{pf}[\textbf{Proof of Theorem \ref{p/qin(1/2,2/3)} for} $\bm{\alpha\beta=\frac35, \frac{5}{7}}$]
   Here $p = 2N+1$ and $q = 2N+3$ for $N=1$ and $N=2$. Since $p = q-2$ and $\gcd(2, q) = 1$, the congruence $r + pl \equiv k \pmod q$ is equivalent to $r\equiv k+2l \pmod q$, and hence \eqref{D_k equation} becomes
\begin{align}
    D_k (z\,;\xi)=& \sum_{\substack{0 \le l \le 3 \\ r \equiv k+2l (\text{mod }q)}} c_r m_{l,\beta}(\xi_r) z^l=0, \nonumber
\end{align}
for all $k = 0, \dots, q-1$.

As the sequence of variables $c_r$ is only defined for $0 \le r \le p-1$, we set $c_{q-1}=0=c_{q-2}$. To systematically construct the subsystem, we index the rows by setting
$$k_m = 2m-2 \pmod q, \quad m = 0, \dots, p-1.$$
We select and order the subsystem equations $D_{k_m}(z\,;\xi) = 0$ according to this sequence. Let $A_N(\xi)$ denote the $p \times p$ coefficient matrix of this ordered subsystem, whose columns are ordered as 
$$(c_0, c_2, \ldots, c_{2N}, c_1, c_3, \ldots, c_{2N-1})^T.$$
Next, we introduce the substitution $c_{2j}=z^{-j}u_{2j}$ for $j=0,...,N$ and $c_{2j-1}=z^{-(N+1+j)}u_{2j-1}$ for $j=1,...,N$. We now distinguish the parity of $r$. If $r=2j$,
    then $2j \equiv 2m - 2 + 2l \pmod q$ reduces to $j = m - 1 + l$ and therefore,
    \begin{equation}c_{2j} z^l = z^{-j} u_{2j} z^l = z^{l-j} u_{2j} = z^{1-m} u_{2j}.\end{equation}
    If $r=2j-1$, then $2j-1 \equiv 2m - 2 + 2l \pmod q$ is equivalent to $2j-1=2m-2+2l-q$, yielding $j = m + l - N - 2$. Hence,
    \begin{equation}
    c_{2j-1} z^l = z^{-(N+1+j)} u_{2j-1} z^l = z^{l - N - 1 - j} u_{2j-1}=z^{1 - m} u_{2j-1}.
    \end{equation}
    Thus, every nonzero term in the $m$-th equation contains the common factor $z^{1-m}$. Since $|z|=1$, dividing by $z^{1-m}$ yields a system independent of $z$,
    \begin{align}\label{reduced system}
        A_N(\xi) \,\mathbf{u}=\mathbf{0},
    \end{align}
    where $\mathbf{u}=(u_0, u_2,\dots,u_{2N},u_1,u_3,\dots, u_{2N-1})^T$. \\
\noindent
\textbf{Case 1: When $N=1$.} Here $p/q=3/5$.
    \begin{align*}
        A_1(\xi)=\left[\begin{array}{cc|c}
m_{1,\beta}(\xi_0) & m_{2,\beta}(\xi_2) & 0\\
m_{0,\beta}(\xi_0) & m_{1,\beta}(\xi_2) & m_{3,\beta}(\xi_1)\\
\hline
0 & m_{0,\beta}(\xi_2) & m_{2,\beta}(\xi_1)
\end{array}\right].
    \end{align*}
    Here,
    \begin{align}
    \det A_1(\xi)
=
m_{1,\beta}(\xi_0)
m_{0,\beta}(\xi_2)
m_{0,\beta}(1-\xi_1)
\left[
1-F_\beta(\xi_2)F_\beta(1-\xi_1)
\right]-
m_{2,\beta}(\xi_2)
m_{0,\beta}(\xi_0)
m_{2,\beta}(\xi_1),
\end{align}
where 
$$F_\beta(x)=\frac{m_{1,\beta}(x)}{m_{0,\beta}(x)}$$
and $\xi_2=\frac23+\xi, 1-\xi_1=\frac23-\xi.$
Hence
$$F_\beta(\xi_2)F_\beta(1-\xi_1)=F_\beta\left(\frac23+\xi\right)F_\beta\left(\frac23-\xi\right).
$$
Since $F_\beta$ is strictly decreasing by Lemma \ref{Fx function} and $0\le \xi\le \frac{1}{6}$,
$$F_\beta\left(\frac23+\xi\right)
\ge F_\beta\left(\frac56\right)\qtq{and}F_\beta\left(\frac23-\xi\right)
\ge F_\beta\left(\frac23\right).$$
Therefore
$$F_\beta\left(\frac23+\xi\right)
F_\beta\left(\frac23-\xi\right)
\ge
F_\beta\left(\frac56\right)F_\beta\left(\frac23\right).$$
Using the uniform lower bounds in Lemma \ref{Fx function}, we have
$$F_\beta\left(\frac56\right)>\frac{21}{25},\quad F_\beta\left(\frac23\right)>\frac94.$$
Consequently, we obtain
\begin{align}\label{bound1}
F_\beta(\xi_2)F_\beta(1-\xi_1)>\frac{21}{25}\frac94>\frac{189}{100}>1.
\end{align}
Thus
$$1-F_\beta(\xi_2)F_\beta(1-\xi_1)<0.$$
Using the sign of $m_{l,\beta}(\xi)$ for $0< \xi\le \frac{1}{6}$, we obtain
$$\det A_1(\xi)>0.$$
At $\xi=0$, $m_{0,\beta}(\xi_0)=0$ and \eqref{bound1} gives
$$F_\beta(2/3)^2>1.$$
Then $\det A_1(0)>0$. Hence,
$$\det A_1(\xi)>0,
\quad
0\le\xi\le\frac16,\quad \beta>0.$$
\\
\noindent
    \textbf{Case 2: When $N=2$.} In this case, $p=5$ and $q=7$.
    \begin{align*}
        A_2(\xi)=
\left[\begin{array}{ccc|cc}
m_{1,\beta}(\xi_0) &
m_{2,\beta}(\xi_2) &
m_{3,\beta}(\xi_4) &
0 &
0
\\
m_{0,\beta}(\xi_0) &
m_{1,\beta}(\xi_2) &
m_{2,\beta}(\xi_4) &
0 &
0
\\
0 &
m_{0,\beta}(\xi_2) &
m_{1,\beta}(\xi_4) &
m_{3,\beta}(\xi_1) &
0
\\
\hline
0 &
0 &
m_{0,\beta}(\xi_4) &
m_{2,\beta}(\xi_1) &
m_{3,\beta}(\xi_3)
\\
0 &
0 &
0 &
m_{1,\beta}(\xi_1) &
m_{2,\beta}(\xi_3)
\end{array}\right].
    \end{align*}
    Here, $A_2(\xi)$ can be partitioned as a $2\times 2$ block matrix 
    \begin{align}\label{block_A2}
        A_2(\xi)=
\left[\begin{array}{c|c}
E_{2,\beta}(\xi) & W_{2,\beta}(\xi)\\[0.25em]
\hline
V_{2,\beta}(\xi) & O_{2,\beta}(\xi)
\end{array}\right],
    \end{align}
    where $E_{2,\beta}(\xi)$ is the upper-left $3\times3$ block and $O_{2,\beta}(\xi)$ is the lower-right $2\times2$ block. The off-diagonal blocks are the rank-1 matrices $W_{2,\beta}(\xi) = m_{3,\beta}(\xi_1) u_{3} v_1^T$ and $V_{2,\beta}(\xi) = m_{0,\beta}(\xi_{4}) v_1 u_{3}^T$, where $u_k$ and $v_k$ denote the $k$-th standard basis vectors in $\mathbb{C}^{3}$ and $\mathbb{C}^2$, respectively.

 To prove our result, it is enough to show that $\det A_2(\xi)\ne 0$ for $\xi\in \left[0,\frac{1}{10}\right]$. Applying the Schur complement gives 
\begin{align}
    \det A_2(\xi)
&=\det O_{2,\beta}(\xi)\,\det E_{2,\beta}(\xi)\left(1-\mathbf{Y}_\beta(\xi)\right),
\end{align}
where $\mathbf{Y}_\beta(\xi)=m_{3,\beta}(\xi_1) m_{0,\beta}(\xi_{4}) (E_{2,\beta}^{-1}(\xi))_{33} (O_{2,\beta}^{-1}(\xi))_{11}$.  Here 
    $$\det O_{2,\beta}(\xi)=m_{2,\beta}(\xi_1)m_{2,\beta}(\xi_3)
-m_{3,\beta}(\xi_3)m_{1,\beta}(\xi_1).$$
Since $m_2>0,~ m_3>0,~m_1<0,$ hence
$$\det O_{2,\beta}(\xi)>0\qtq{and} (O_{2,\beta}^{-1}(\xi))_{11}=
\frac{m_2(\xi_3)}{\det O_{2,\beta}(\xi)}>0.$$
Now compute $\det E_{2,\beta}(\xi)$. We have
$$\det E_{2,\beta}(0)=m_{1,\beta}(0)[m_{1,\beta}(\xi_2)m_{1,\beta}(\xi_4)-m_{0,\beta}(\xi_2)m_{2,\beta}(\xi_4)]<0$$
with 
$$\left(E_{2,\beta}(0)\right)^{-1}_{33}=\frac{m_{1,\beta}(\xi_2)}{m_{1,\beta}(\xi_2)m_{1,\beta}(\xi_4)-m_{0,\beta}(\xi_2)m_{2,\beta}(\xi_4)}.$$
For $\xi>0$,
$$\det E_{2,\beta}(\xi)=
m_{0,\beta}(\xi_0)m_{0,\beta}(\xi_2)m_{0,\beta}(\xi_4)\,H_\beta(\xi),$$
with
\begin{align}\label{H_beta function}
H_\beta(\xi)
=
F_\beta(\xi_0)F_\beta(\xi_2)F_\beta(\xi_4)
+
\Psi_\beta(1-\xi_2)F_\beta(\xi_4)+
\left(F_\beta(\xi_0)\Psi_\beta(1-\xi_4)-
\frac{F_\beta(\xi_4)}{\Psi_\beta(\xi_4)}\right).
\end{align}
Since 
$$F_\beta(\xi_0)>F_\beta(\xi_4),\quad \Psi_\beta(1-\xi_4)\ge 1,\qtq{and} \frac{1}{\Psi_\beta(\xi_4)}\le 1,$$
we have $H_\beta(\xi)>0$ in \eqref{H_beta function}. Therefore,
$$\det E_{2,\beta}(\xi)<0\qtq{and}(E_{2,\beta}^{-1}(\xi))_{3,3}<0.$$
To prove our result, it remains to prove 
$$\mathbf{Y}_\beta(\xi)\ne 1,\text{ for }\xi \in \left[0,\frac{1}{10}\right].$$
Since $m_{3,\beta}(\xi_1) > 0$, $m_{0,\beta}(\xi_{4}) < 0$, $\mathbf{Y_\beta(\xi)}$ is strictly positive. 
Consequently, 
$$Y_\beta(\xi)=
\frac{1}{
F_\beta(\xi_4)
F_\beta(1-\xi_1)
(1+R_E)(1+R_O)
},$$
where $F_\beta(\xi)$, and $\Psi_\beta(\xi)$ are in Lemmas \ref{Fx function} and \ref{Psi_function} respectively, 
$$R_O=
\frac{
\Psi_\beta(\xi_1)
}{
F_\beta(1-\xi_1)
F_\beta(1-\xi_3)
}>0,$$
and
$$R_E=
\frac{
F_\beta(\xi_0)\Psi_\beta(1-\xi_4)-
F_\beta(\xi_4)/\Psi_\beta(\xi_4)
}{
F_\beta(\xi_4)
\left[
F_\beta(\xi_0)F_\beta(\xi_2)
+
\Psi_\beta(1-\xi_2)
\right]
}.$$
Since 
$$F_\beta(\xi_0)>F_\beta(\xi_4),\quad \Psi_\beta(1-\xi_4)\ge1,
\qtq{and}
\frac1{\Psi_\beta(\xi_4)}\le1,
$$
we have $R_E>0$. Since $R_O>0$,
\begin{align*}
    0<\mathbf{Y}_\beta(\xi )&\le \frac{1}{
F_\beta(\xi_4)
F_\beta(1-\xi_1)
(1+R_E)}.
\end{align*}
It is enough to show that
$$K_\beta(\xi)=F_\beta(\xi_4)
F_\beta(1-\xi_1)
(1+R_E)>1,~0\le \xi\le\frac{1}{10}.$$
Moreover,
$$K_\beta(\xi) = F_\beta\left(\frac45-\xi\right) F_\beta\left(\frac45+\xi\right) \left[ 1+ \frac{F_\beta(\frac15-\xi)-1/F_\beta(\xi)}{\Psi_\beta(\frac45+\xi) \left[ F_\beta(\frac25+\xi) + \Psi_\beta(\frac35-\xi)/F_\beta(\xi) \right]} \right]$$
From Lemma \ref{K_beta_greater_than_1}, we have $K_\beta(\xi)>1$, for $0\le \xi\le \frac{1}{10}$. Since $\mathbf{Y}_\beta(\xi )$ is continuous at $\xi=0$, taking limit  $\xi\rightarrow 0^+$ we obtain 
$0<Y_\beta(0)\le \frac{1}{K_\beta(0)}$.
From Lemma \ref{K_beta_greater_than_1}, we obtain $1-Y_\beta(0)>0.$ Thus,
$$\det A_2(\xi)\ne 0,\qtq{for}\xi\in \left[0,\frac{1}{10}\right].$$
Hence, the system \eqref{reduced system} admits only the trivial solution $\mathbf{u}=\mathbf{0}$, which implies that $\mathbf{c}=\mathbf{0}$. Therefore,  $$\ker\Phi_{\alpha}(z; \xi) = \{\mathbf{0}\}, \text{ for all }(z, \xi) \in \mathbb{T} \times \left[0, \frac{1}{2p}\right],$$
completing the proof.
\end{pf}

\begin{pf}[\textbf{Proof of Theorem \ref{p/qin(1/2,2/3)} for} $\bm{p/q\in (1/2,3/5)}$] From \eqref{D_k=0}, we have
\begin{align}\label{Thm interval eqn}
     D_k(z\,;\xi) = \sum_{\substack{0\le r\le p-1,\,0\le l\le 3\\ r+pl\equiv k(\text{mod }q)}} c_r m_{l,\beta}(\xi_r) z^l=0,
\end{align}
for the rational density $\alpha\beta=p/q$, $\frac{1}{2}< \frac{p}{q}< \frac{3}{5}$, $\xi_r=\xi+\frac{r}{p}$, and $\xi\in \left[0,\frac{1}{2p}\right]$. 

By analyzing the congruence relation $r+pl\equiv k \pmod q$ for $0\le r\le p-1$ under the assumption $1/2<p/q<3/5$, the active row indices for each shift $l\in\{0,1,2,3\}$ are determined as follows
$$\begin{aligned}  l=0:&\quad k\in[0,p-1], \\  l=1:&\quad k\in[p,q-1]\cup[0,2p-q-1], \\  l=2:&\quad k\in[2p-q, 3p-q-1], \\  l=3:&\quad k\in[3p-q, q-1]\cup[0,4p-2q-1].  \end{aligned}$$
Intersecting these active domains partitions the row index set into three classes of two-variable equations (Classes A, B, and C) together with the remaining three-variable rows, as illustrated in Figure~\ref{fig:partition}.

\begin{figure}[H]
    \centering
\includegraphics[width=\textwidth]{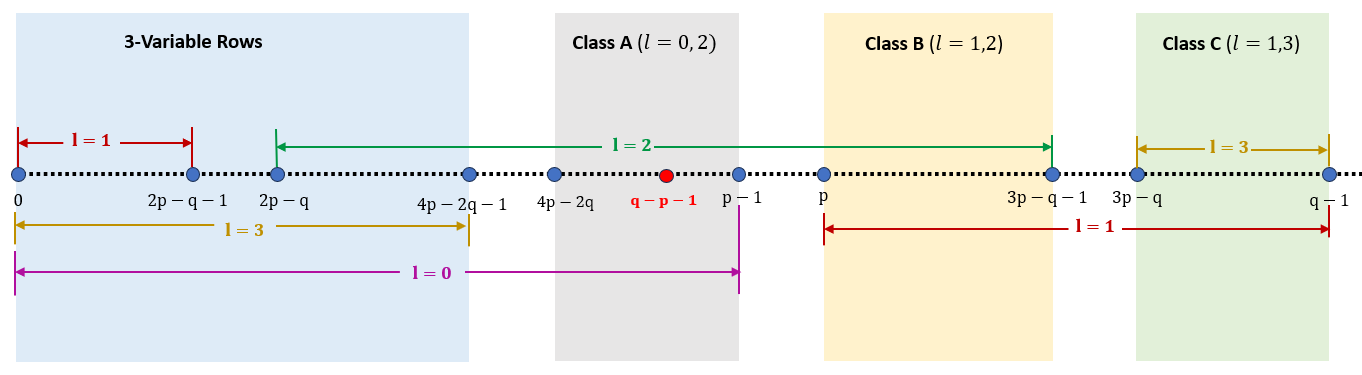}
    \caption{ Partitioning of the equation index space $k \in [0, q-1]$ by the polynomial shift parameter $l \in \{0, 1, 2, 3\}$ for rational densities $1/2 < p/q < 3/5$. The intersections of these active domains partition the system into 3-variable interior rows and precise 2-variable equation classes (Classes A, B, and C).}
    \label{fig:partition}
\end{figure}
\noindent
\textbf{Class A}: Only $l=0, 2$, are active, $k \in [4p-2q, p-1]$ giving $(p-1) - (4p-2q) + 1 = 2q - 3p$ equations.\\
\noindent
\textbf{Class B}: Only $l=1, 2$ are active, $k \in [p, 3p-q-1]$ giving $(3p-q-1) - p + 1 = 2p - q$ equations.\\
\noindent
\textbf{Class C}: Only $l=1, 3$ are active, $k \in [3p-q, q-1]$ giving $(q-1) - (3p-q) + 1 = 2q - 3p$ equations.

Hence, the total number of two-variable equations is
$$(2q-3p) + (2p-q) + (2q-3p) = 3q - 4p.$$
For $p/q<3/5$, we have $3q - 5p \ge 1$ which implies that $3q - 4p \ge p + 1.$ 

Now define a graph $G$ whose vertices are $V = \{c_0, c_1, \ldots, c_{p-1}\}$, where
edge $E_r$ is defined as
$$E_r=\{c_r, c_{(r+q-p)\text{ mod }p}\},~0\le r\le p-1.$$ 
Consequently, Class B gives $E_r$ for $0\le r\le 2p-q-1$, Class C gives $E_r$ for $2p-q\le r\le q-p-1$ and Class A gives $E_r$ for $4p-2q\le r\le p-1$. Therefore, every edge $E_r$ forms a two-variable equation with non-zero coefficients. Since $p/q<3/5$ implies $4p-2q\le q-p-1$, the above three classes together contain $E_r$ for $0\le r\le p-1$. Since $\gcd (p,q-p)=1$, graph $G$ is connected. 

Next, consider the equations corresponding to $k = q-1$ and $k = q-p-1$. These give
\begin{flalign*}
   &&D_{q-1}(z; \xi) &= c_{q-p-1} m_{1,\beta}(\xi_{q-p-1})z + c_{2q-3p-1} m_{3,\beta}(\xi_{2q-3p-1})z^3 = 0&\\
   \text{and}&&D_{q-p-1}(z; \xi) &= c_{q-p-1} m_{0,\beta}(\xi_{q-p-1}) + c_{2q-3p-1} m_{2,\beta}(\xi_{2q-3p-1})z^2 = 0.
\end{flalign*}
This can be written as 
$$\begin{pmatrix} 
 m_{1,\beta}(\xi_{q-p-1})z & m_{3,\beta}(\xi_{2q-3p-1})z^3\\
    m_{0,\beta}(\xi_{q-p-1}) & m_{2,\beta}(\xi_{2q-3p-1})z^2
    \end{pmatrix} \begin{pmatrix} c_{q-p-1} \\ c_{2q-3p-1} \end{pmatrix} = \begin{pmatrix} 0 \\ 0 \end{pmatrix}.$$
The determinant of the coefficient matrix is
$$\Delta=m_{0,\beta}(\xi_{q-p-1})m_{3,\beta}(\xi_{2q-3p-1})z^3[F_\beta(\xi_{q-p-1})F_\beta(1-\xi_{2q-3p-1})-1],$$
where $F_\beta$ is defined in Lemma \ref{Fbeta_beta_monotonicity}. From Lemma \ref{Fbeta_beta_monotonicity}, we obtain
\begin{align*}
    F_\beta(\xi_{q-p-1})F_\beta(1-\xi_{2q-3p-1})>F_0(\xi_{q-p-1})F_0(1-\xi_{2q-3p-1})>1.
\end{align*}
Let $x = \xi_{q-p-1}$ and $y = 1-\xi_{2q-3p-1}$. Moreover, 
$$2(1-x) - y = \frac{1}{p} - \xi.$$
Since $\xi \le \frac{1}{2p}$, we have $\frac{1}{p} - \xi > 0$. Therefore, $y < 2(1-x)$
implies $x < 1 - \frac{y}{2}$. Since $1/2<p/q < 3/5$,
$2p - q \ge 1$ and $3q - 5p \ge 1$ enforce $p \ge 4$. Consequently, $\frac{1}{3p} \le \frac{1}{12}$, guaranteeing
$$0 < y \le \frac{2}{3} + \frac{1}{12} = \frac{3}{4}.$$
Since $F_0(t)$ is strictly decreasing on $(0,1)$, we have
$$F_0(x)F_0(y) > F_0\left(1 - \frac{y}{2}\right)F_0(y).$$
From Lemma \ref{Fbeta_beta_monotonicity}, recall that $F_0(t) = \frac{(1-t)(3t+1)}{t^2}$. Therefore,
$$F_0\left(1 - \frac{y}{2}\right)F_0(y) - 1 = \frac{8y^3 - 26y^2 + 9y + 8}{y(2-y)^2}.$$
For $0 < y \le 3/4$, $y^3 > 0$ and $y^2 \le \frac{3}{4}y$ yield
$$8y^3 - 26y^2 + 9y + 8 > -26y^2 + 9y + 8 \ge -\frac{39}{2}y + 9y + 8 = 8 - \frac{21}{2}y.$$
Since $y \le 3/4$, we obtain
$$8 - \frac{21}{2}y \ge 8 - \frac{63}{8} = \frac{1}{8} > 0.$$
Thus, $F_0(1-y/2)F_0(y) > 1$, which ensures $F_0(x)F_0(y) > 1$.
Hence 
$$F_{\beta}(\xi_{q-p-1})F_{\beta}(1-\xi_{2q-3p-1}) > 1$$
and $\Delta\ne 0$, forcing 
$$c_{q-p-1} = 0 \quad \text{and} \quad c_{2q-3p-1} = 0.$$
Since $G$ is connected, repeated application of the two-variable relations forces
$$c_0 = c_1 = \cdots = c_{p-1} = 0,$$
which completes the proof.
\end{pf}

\begin{pf}[\textbf{Proof of Theorem \ref{p/qin(1/2,2/3)} for} $\bm{p/q\in (3/5,2/3)}$]
From \eqref{D_k=0}, we have
\begin{align}\label{Subsystem_in_(3/5,2/3)}
     D_k(z\,;\xi) = \sum_{\substack{0\le r\le p-1,\,0\le l\le 3\\ r+pl\equiv k(\text{mod }q)}} c_r m_{l,\beta}(\xi_r) z^l=0,
\end{align}
for the rational density $\alpha\beta=p/q\in (3/5,2/3)$, $\xi_r=\xi+\frac{r}{p}$, and $\xi\in \left[0,\frac{1}{2p}\right]$. By analyzing the congruence relation $r+pl\equiv k \pmod q$ for $0\le r\le p-1$ under the assumption $3/5 < p/q < 2/3$, the active row indices for each shift $l\in\{0,1,2,3\}$ are determined in Figure \ref{Thm 1.3 for first half fig}.
\begin{figure}[H]
        \centering
\includegraphics[width=\textwidth]{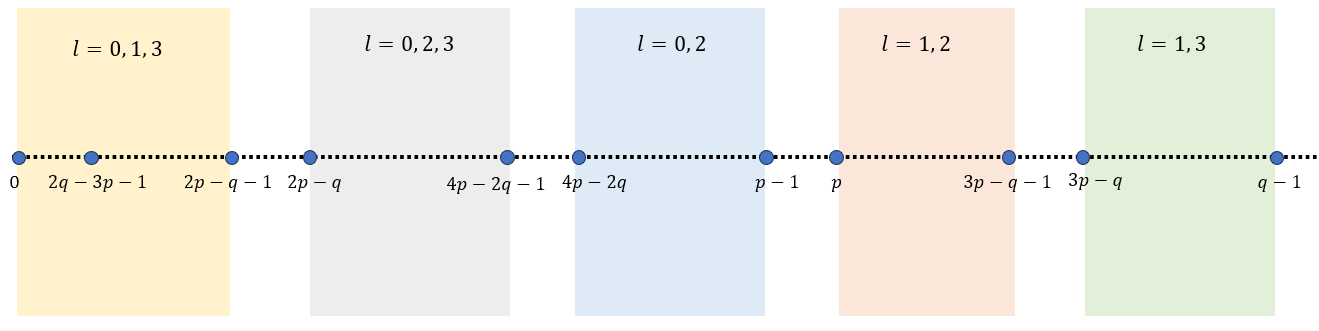}
        \caption{Partitioning of the equation index space $k \in [0, q-1]$ by the polynomial shift parameter $l \in \{0, 1, 2, 3\}$ for rational densities $3/5 < p/q < 2/3$.}
        \label{Thm 1.3 for first half fig}
\end{figure}
For rational densities in this interval where $2q-3p=1$, the boundary evaluation at $\xi=0$ requires a carefully selected sub-matrix. Excluding this specific boundary case, we can extract the following closed $3 \times 3$ subsystem directly from \eqref{Subsystem_in_(3/5,2/3)} 
    \begin{flalign*}
       && D_{q-1} &= c_{q-p-1}m_{1,\beta}(\xi_{q-p-1})z + c_{2q-3p-1}m_{3,\beta}(\xi_{2q-3p-1})z^3 = 0&\\
        &&D_{q-p-1}&= c_{q-p-1}m_{0,\beta}(\xi_{q-p-1}) + c_{2q-3p-1}m_{2,\beta}(\xi_{2q-3p-1})z^2 + c_{3q-4p-1}m_{3,\beta}(\xi_{3q-4p-1})z^3 = 0\\
       \text{and} &&D_{2q-2p-1}&= c_{2q-3p-1}m_{1,\beta}(\xi_{2q-3p-1})z + c_{3q-4p-1}m_{2,\beta}(\xi_{3q-4p-1})z^2 = 0,
    \end{flalign*}
    for $3/5<p/q<2/3$. By introducing the index notation $A = q-p-1$, $B = 2q-3p-1$, and $C = 3q-4p-1$, this system can be compactly expressed in matrix form as
      $$\begin{bmatrix}
      m_{1,\beta}(\xi_A)z & m_{3,\beta}(\xi_B)z^3 & 0 \\ m_{0,\beta}(\xi_A) & m_{2,\beta}(\xi_B)z^2 & m_{3,\beta}(\xi_C)z^3 \\ 0 & m_{1,\beta}(\xi_B)z & m_{2,\beta}(\xi_C)z^2 \end{bmatrix} \begin{bmatrix} c_{q-p-1} \\ c_{2q-3p-1} \\ c_{3q-4p-1} \end{bmatrix} = \begin{bmatrix} 0 \\ 0 \\ 0 \end{bmatrix}.$$
      Here, $0\le B<A<C<p-1$.
    The determinant of the above $3\times 3$ coefficient matrix $M$ is 
    $$\det M= z^5 \left[ m_{1,\beta}(\xi_A)m_{2,\beta}(\xi_B)m_{2,\beta}(\xi_C) - m_{1,\beta}(\xi_A)m_{1,\beta}(\xi_B)m_{3,\beta}(\xi_C) - m_{0,\beta}(\xi_A)m_{3,\beta}(\xi_B)m_{2,\beta}(\xi_C) \right].$$
    If $\det M=0$, then
\begin{align}\label{thm 1.4 second interval}
m_{1,\beta}(\xi_A)m_{1,\beta}(\xi_B)m_{3,\beta}(\xi_C) + m_{0,\beta}(\xi_A)m_{3,\beta}(\xi_B)m_{2,\beta}(\xi_C)&=m_{1,\beta}(\xi_A)m_{2,\beta}(\xi_B)m_{2,\beta}(\xi_C) \nonumber\\
\frac{m_{1,\beta}(1-\xi_B)}{m_{0,\beta}(1-\xi_B)} + \frac{m_{1,\beta}(\xi_B)}{m_{0,\beta}(1-\xi_B)} \frac{m_{0,\beta}(1-\xi_C)}{m_{1,\beta}(1-\xi_C)}&= \frac{m_{0,\beta}(\xi_A)}{m_{1,\beta}(\xi_A)} \nonumber\\
      F_\beta(1-\xi_B) + \frac{\Psi_\beta(\xi_B)}{F_\beta(1-\xi_C)} &= \frac{1}{F_\beta(\xi_A)},
    \end{align}
    where $F_\beta$ and $\Psi_\beta$ are defined in Lemmas \eqref{Fbeta_beta_monotonicity} and \eqref{Psi_function} respectively. From Lemma \ref{Last_lemma_for_p/q}, we can show that \eqref{thm 1.4 second interval} never holds. Therefore,
$\det M$ is
nonzero. 
    
    Now consider $\xi=0$ and $2q-3p=1$; then we need to select the equations $D_{q-2}$, $D_{q-p-2}$, and $D_{2q-2p-2}$, then the closed $3 \times 3$ subsystem is 
    $$\left[\begin{matrix}m_{1,\beta}(\xi _{D})z&m_{2,\beta}(\xi _{I})z^{2}&0\\ m_{0,\beta}(\xi _{D})&m_{1,\beta}(\xi _{I})z&m_{3,\beta}(\xi _{J})z^{3}\\ 0&m_{0,\beta}(\xi _{I})&m_{2,\beta}(\xi _{J})z^{2}\end{matrix}\right]\left[\begin{matrix}c_{q-p-2}\\ c_{2q-2p-2}\\ c_{3q-4p-2}\end{matrix}\right]=\left[\begin{matrix}0\\ 0\\ 0\end{matrix}\right],$$
    where
    $D=q-p-2$, $I=2q-2p-2$, and $J=3q-4p-2$. Then 
    $$\det =z^{4}\left[m_{1,\beta}(\xi _{D})m_{1,\beta}(\xi _{I})m_{2,\beta}(\xi _{J})-m_{1,\beta}(\xi _{D})m_{0,\beta}(\xi _{I})m_{3,\beta}(\xi _{J})-m_{0,\beta}(\xi _{D})m_{2,\beta}(\xi _{I})m_{2,\beta}(\xi _{J})\right].$$
    Suppose
    \begin{align}\label{Thm second xi=0}
        \frac{m_{1,\beta}(\xi _{I})}{m_{0,\beta}(\xi _{I})}-\frac{m_{3,\beta}(\xi _{J})}{m_{2,\beta}(\xi _{J})}&=\frac{m_{0,\beta}(\xi _{D})m_{2,\beta}(\xi _{I})}{m_{1,\beta}(\xi _{D})m_{0,\beta}(\xi _{I})}\nonumber\\
        F_\beta(\xi _{I})+\frac{ \Psi_\beta (1-\xi _{I})}{F_\beta(\xi _{D})}&=\frac{1}{F_\beta(1-\xi _{J})}.
    \end{align}
 For $3/5<p/q<2/3$, $\xi=0$ and $2q-3p=1$, we have 
 $\xi_D\in [0.2,0.5)$, $\xi_I\in [0.8,1)$, and $\xi_J\in [0.4,0.5)$. Let $1-\xi_I=t\in (0,\frac{1}{2}]$, then we have $\xi_D=\frac{1}{2}-\frac{3t}{2}$ and $\xi_J=\frac{1}{2}-\frac{t}{2}$. Now from Lemma \ref{E_beta}, we conclude that \eqref{Thm second xi=0} does not hold.

Hence,  the determinant of the coefficient subsystem being non-zero implies
$$c_{q-p-1}=c_{2q-3p-1}=c_{3q-4p-1}=0$$
whenever $(\xi,2q-3p)\neq (0,1)$, whereas in the exceptional case $\xi=0$ and $2q-3p=1$, it yields
$$c_{q-p-2}=c_{2q-2p-2}=c_{3q-4p-2}=0.$$

Now we can show that all remaining coefficients must vanish by exploiting the recursive structure of the equations $D_k(z;\xi)=0$. As before, we extend the coefficient sequence by setting
$$c_r=0,~ p\le r\le q-1.$$
Let 
$$
\delta= 
\begin{cases} 1, & (\xi,2q-3p)\neq(0,1),\\ 2, & \xi=0,\ 2q-3p=1. 
\end{cases}$$
Since $\gcd(p,q)=1$, for every fixed integer $\delta$, the map
$$m\longmapsto mp-\delta \pmod q$$
is a permutation of $\mathbb Z_q$. Define
$v_m=c_{(mp-\delta)\bmod q}.$
\\
\noindent
\textbf{Case 1}: Take $(\xi,2q-3p)\neq(0,1)$. In this case, define
$$v_m=c_{(mp-1)\bmod q},~ m\in\mathbb Z_q.$$
Then
\begin{align*} v_{-4}&=c_{3q-4p-1}=0,\quad v_{-3}=c_{2q-3p-1}=0,\\ v_{-2}&=c_{2q-2p-1}=0,\quad v_{-1}=c_{q-p-1}=0,\quad v_0=c_{q-1}=0. 
\end{align*}
\textbf{Case 2}: Take $\xi=0$ and $2q-3p=1$. In this case
$$v_m=c_{(mp-2)\bmod q}.$$
Then using $2q-3p=1$ we obtain
\begin{align*} 
v_{-4}&=c_{3q-4p-2}=0,\quad v_{-3}=c_{q-1}=0,\\ 
v_{-2}&=c_{2q-2p-2}=0,\quad v_{-1}=c_{q-p-2}=0,\quad v_0=c_{q-2}=0. \end{align*}

For $m\in\mathbb Z$, set
$$r_j^{(m)}=(m-j)p-\delta\pmod q, ~ j=0,1,2,3.$$

Taking $k=(mp-\delta)\pmod q$ in $D_k(z;\xi)=0$, we obtain the recurrence
\begin{align}\label{recurrence_eq}
v_m m_{0,\beta}(\xi_{r_0^{(m)}}) +v_{m-1}m_{1,\beta}(\xi_{r_1^{(m)}})z +v_{m-2}m_{2,\beta}(\xi_{r_2^{(m)}})z^2 +v_{m-3}m_{3,\beta}(\xi_{r_3^{(m)}})z^3 =0.
\end{align}

Assume inductively that
$v_{m-1}=v_{m-2}=v_{m-3}=0.$
If either $\xi>0$ or $r_0^{(m)}\neq0$, then
$\xi_{r_0^{(m)}}>0,$
and hence
$m_{0,\beta}(\xi_{r_0^{(m)}})\neq0.$
Therefore, \eqref{recurrence_eq} immediately gives $v_m=0.$

It remains only for $\xi=0,~ r_0^{(m)}=0.$
In this case $mp-\delta\equiv0\pmod q,$ and hence
$(m+1)p-\delta\equiv p\pmod q.$
Consequently, $v_{m+1}=c_p=0$ by our extension of the coefficient sequence. Applying \eqref{recurrence_eq} with $m$ replaced by $m+1$, the coefficient of $v_m$ is $m_{1,\beta}(0)\neq0.$
Since $v_{m+1}=v_{m-1}=v_{m-2}=0$, the recurrence again yields
$v_m=0.$

Starting from $v_{-2}=v_{-1}=v_0=0,$
and applying this argument successively, we obtain
$v_1=v_2=\cdots=v_{q-1}=0.$ Thus
$$v_m=0,~ m\in\mathbb Z_q.$$
Since the map
$m\longmapsto mp-\delta\pmod q$
is a permutation of $\mathbb Z_q$, it follows that
$$c_0=c_1=\cdots=c_{p-1}=0.$$
Therefore,
$$\ker\Phi(z;\xi)=\{\mathbf 0\},$$
which completes the proof.
\end{pf}

\begin{pf}[\textbf{Proof of Theorem \ref{p/qin(1/2,2/3)} for} $\bm{p/q\in (2/3,5/7)}$]
Now, we extract the following closed subsystem of four equations from \eqref{D_k=0} that
 \begin{flalign*}
    &&D_{q-1}&= c_{q-p-1}m_{1,\beta}(\xi _{q-p-1})z+c_{2q-2p-1}m_{2,\beta}(\xi _{2q-2p-1})z^{2}=0&\\
 &&D_{q-p-1}&= c_{q-p-1}m_{0,\beta}(\xi _{q-p-1})+c_{2q-2p-1}m_{1,\beta}(\xi _{2q-2p-1})z+c_{3q-4p-1}m_{3,\beta}(\xi _{3q-4p-1})z^{3}=0&\\
 &&D_{3q-3p-1}&= c_{3q-4p-1}m_{1,\beta}(\xi _{3q-4p-1})z+c_{4q-5p-1}m_{2,\beta}(\xi _{4q-5p-1})z^{2}=0&\\
 \text{and}&&D_{2q-2p-1}&= c_{2q-2p-1}m_{0,\beta}(\xi _{2q-2p-1})+c_{3q-4p-1}m_{2,\beta}(\xi _{3q-4p-1})z^{2}+c_{4q-5p-1}m_{3,\beta}(\xi _{4q-5p-1})z^{3}=0,
 \end{flalign*}
for $2/3<p/q<5/7$. This subsystem can be expressed in matrix form as
$$\left[\begin{matrix}m_{1,\beta}(\xi _{q-p-1})z&m_{2,\beta}(\xi _{2q-2p-1})z^{2}&0&0\\ m_{0,\beta}(\xi _{q-p-1})&m_{1,\beta}(\xi _{2q-2p-1})z&m_{3,\beta}(\xi _{3q-4p-1})z^{3}&0\\ 0&0&m_{1,\beta}(\xi _{3q-4p-1})z&m_{2,\beta}(\xi _{4q-5p-1})z^{2}\\ 0&m_{0,\beta}(\xi _{2q-2p-1})&m_{2,\beta}(\xi _{3q-4p-1})z^{2}&m_{3,\beta}(\xi _{4q-5p-1})z^{3}\end{matrix}\right]\left[\begin{matrix}c_{q-p-1}\\ c_{2q-2p-1}\\ c_{3q-4p-1}\\ c_{4q-5p-1}\end{matrix}\right]=\left[\begin{matrix}0\\ 0\\ 0\\ 0\end{matrix}\right].$$
Suppose that the coefficient matrix is singular. Expanding its
determinant and using the identities in \eqref{reflection-symmetry}, after dividing
by the nonzero common factors, we obtain
\begin{align}\label{Thm rational in (2/3,5/7)}
F_\beta(1-x_3)F_\beta(x_2)
+
\Psi_\beta(x_3)
\frac{F_\beta(x_2)}{F_\beta(1-x_4)}
+
\Psi_\beta(1-x_2)
\frac{F_\beta(1-x_3)}{F_\beta(x_1)}
+
\frac{
\Psi_\beta(x_3)\Psi_\beta(1-x_2)}
{F_\beta(x_1)F_\beta(1-x_4)}
=1,
\end{align}
where $x_1=\xi_{q-p-1}$, $x_2=\xi_{2q-2p-1}$, $x_3=\xi_{3q-4p-1}$, $x_4=\xi_{4q-5p-1},
$ and $F_\beta(x)$ is defined in Lemma \eqref{Fbeta_beta_monotonicity}. We obtain from Lemma \ref{T1_and_T3} that the Left-Hand Side of \eqref{Thm rational in (2/3,5/7)} is strictly greater than $1$; this contradiction proves that the coefficient matrix is non-singular and strictly invertible, forcing 
$$c_{q-p-1} = c_{2q-2p-1} = c_{3q-4p-1} = c_{4q-5p-1} = 0.$$
To show that the remaining coefficients vanish, extend the sequence by
setting $c_r=0,\qquad p\le r\le q-1,$
and define
$$
v_m=c_{(mp-1)\bmod q}.
$$
This gives
$$
v_{-5}=v_{-4}=v_{-3}=v_{-2}=v_{-1}=v_0=0.
$$
Since $p\le 3q-3p-1\le q-1$ for $2/3<p/q<5/7$, we have $v_{-3}=c_{3q-3p-1}=0$. Therefore, applying the recurrence
\eqref{recurrence_eq} with $\delta=1$, together with the same argument for the exceptional case
$m_{0,\beta}(0)=0$, yields
$$
v_m=0,\qquad m\in\mathbb Z_q.
$$
Since $m\mapsto mp-1\pmod q$ is a permutation of $\mathbb Z_q$, we
conclude that
$$
c_0=c_1=\cdots=c_{p-1}=0.
$$
Hence, $\ker\Phi(z;\xi)=\{\bm 0\}$, completing the proof.
\end{pf}

Theorem \ref{p/qin(1/2,2/3)} leads us to the following Conjecture.
\begin{conj}
The frame set of $g$ is
    $$\mathcal{F}(g)=\left\{(\alpha,\beta)\in \R^2_{+}:~\alpha\beta < 1\text{ and }\alpha\beta \neq \frac{N}{N+1},\text{ for every }N \ge 1\right\}.$$
\end{conj}
The remaining rational densities are strongly supported by numerical computations of the associated Zibulski–Zeevi matrices, but a complete analytic proof of the nonvanishing of the full matrix is complicated.

\appendix
\renewcommand{\thelemma}{\thesection\arabic{lemma}}
\section{Appendix}
\begin{lem}\label{lem:R_k_increasing}
For $k \ge 1$ and $x > 1$, define
$$R_k(x) = \frac{(2x+1)^{2k} + (2x-1)^{2k} - (x+2)^{2k} - (x-2)^{2k}}{4^k - 1} - (x-1)^{2k}.$$
Then $R_{k+1}(x) > R_k(x)$.
\end{lem}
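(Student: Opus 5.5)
The plan is to expand around the common zero $x=1$ and compare $R_{k+1}$ and $R_k$ coefficient by coefficient. First, $R_k(1)=0$ for every $k$, because at $x=1$ the numerator is $3^{2k}+1-3^{2k}-1=0$ and $(x-1)^{2k}$ vanishes. So I substitute $x=1+t$ with $t>0$ and aim to show that
$$R_{k+1}(1+t)-R_k(1+t)=\sum_{n\ge1}\delta_{k,n}\,t^n$$
has all coefficients $\delta_{k,n}\ge0$, with at least one of them strictly positive. That gives the strict inequality for every $t>0$.

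\emph{Step 1: explicit coefficients.} By the binomial theorem,
$$(2x\pm1)^{2k}=(3+2t)^{2k}\ \text{or}\ (1+2t)^{2k},\qquad (x+2)^{2k}=(3+t)^{2k},\qquad (x-2)^{2k}=(1-t)^{2k}.$$
Hence the coefficient of $t^n$ in the numerator $N_k$ is
$$\binom{2k}{n}\Bigl[2^n\bigl(3^{2k-n}+1\bigr)-3^{2k-n}-(-1)^n\Bigr].$$
The coefficient of $t^n$ in $R_k$ is therefore
$$\rho_{k,n}=\binom{2k}{n}\frac{(2^n-1)3^{2k-n}+2^n-(-1)^n}{4^k-1}-\mathbf 1_{\{n=2k\}}.$$
Checks: $\rho_{k,n}\ge0$ for all $n$, $\rho_{k,2k}=1$, and for $k=1$ this reproduces $R_1(1+t)=t^2+4t=(x-1)(x+3)$.

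\emph{Step 2: comparing coefficients.} For $n>2k$ only $R_{k+1}$ contributes, so $\delta_{k,n}=\rho_{k+1,n}\ge0$; in particular $\delta_{k,2k+2}=1>0$. For $1\le n\le 2k$ I have to show $\rho_{k+1,n}\ge\rho_{k,n}$, with the term $-1$ at $n=2k$ removed from $\rho_{k,2k}$. Dropping the common factor $(2^n-1)3^{-n}$, up to the small correction $2^n-(-1)^n$, this reduces to
$$\binom{2k+2}{n}\frac{9^{k+1}}{4^{k+1}-1}\ \ge\ \binom{2k}{n}\frac{9^{k}}{4^{k}-1}.$$
This holds because $\binom{2k+2}{n}\ge\binom{2k}{n}$ and $9(4^k-1)\ge 4^{k+1}-1$. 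The correction terms $\binom{2k}{n}\bigl(2^n-(-1)^n\bigr)/(4^k-1)$ have to be absorbed, which I plan to do with the crude bound $3^{2k+2-n}(2^n-1)\ge 2^n+1$.

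\emph{Main obstacle.} The delicate coefficient is the top one of $R_k$, $n=2k$. There the subtracted $1$ makes $\rho_{k,2k}=1$, while
$$\rho_{k+1,2k}=\binom{2k+2}{2}\frac{9(4^k-1)+4^k-1}{4^{k+1}-1}=\binom{2k+2}{2}\frac{10(4^k-1)}{4^{k+1}-1}\ \ge\ 1.$$
The same care is needed at $n=2k-1$, where the parity term $(-1)^n$ flips sign. I would verify these two boundary indices by hand and run the uniform estimate of Step 2 on the rest.

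If the termwise comparison fails for some small $n$, the fallback is different. The lowest coefficient is
$$R_k'(1)=\frac{2k(3^{2k-1}+3)}{4^k-1},$$
which is clearly increasing in $k$. Combining this with the leading-term dominance, $R_{k+1}-R_k\sim t^{2k+2}$, I would then control the finitely many intermediate coefficients by grouping adjacent pairs $n, n+1$.
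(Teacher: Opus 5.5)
Your argument is correct, and it takes a genuinely different route from the paper.

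\textbf{The paper's route.} It expands about $x=0$. It writes $P_k=R_k+(x-1)^{2k}$ as an even polynomial and forms $S_k=P_{k+1}-x^2P_k$. Using the auxiliary quantities $\Lambda_j$ and $t_j$, it shows that the coefficients of $S_k$ change sign exactly once. Descartes' rule of signs together with $S_k(1)=0$ then gives $S_k>0$ for $x>1$. From this the paper gets the stronger inequality $R_{k+1}>x^2R_k$, and positivity of $R_k$ by induction from $R_1=(x-1)(x+3)$.

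\textbf{Your route.} You expand about the common zero $x=1$, where every Taylor coefficient $\rho_{k,n}$ is explicit and nonnegative, and you compare coefficient by coefficient. This is more elementary: there is no sign-change bookkeeping and no Descartes. It also gives $R_k(1+t)\ge t^{2k}>0$ for free. It does not give the paper's geometric strengthening $R_{k+1}>x^2R_k$, but that is not needed for the power-series ratio argument in Lemma~\ref{Fbeta_beta_monotonicity}.

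\textbf{One step to tighten.} In Step 2, the absorption inequality you quote, $3^{2k+2-n}(2^n-1)\ge 2^n+1$, is too weak to close the estimate. Write $a=(2^n-1)3^{2k-n}$ and $b=2^n-(-1)^n$. For $1\le n\le 2k-1$,
\[
\rho_{k+1,n}=\binom{2k+2}{n}\frac{9a+b}{4^{k+1}-1}\ \ge\ \binom{2k}{n}\frac{9a+b}{5(4^k-1)}\ \ge\ \binom{2k}{n}\frac{a+b}{4^k-1}=\rho_{k,n}.
\]
This uses three facts:
\begin{itemize}
\item $\binom{2k+2}{n}\ge\binom{2k}{n}$;
\item $5(4^k-1)\ge 4^{k+1}-1$, which is equivalent to $4^k\ge4$;
\item $9a+b\ge5(a+b)$, which is equivalent to $a\ge b$.
\end{itemize}
The needed bound is therefore $3^{2k-n}(2^n-1)\ge 2^n-(-1)^n$, with exponent $2k-n$ rather than $2k+2-n$. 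It holds because $2k-n\ge1$ and $3(2^n-1)\ge 2^n+1$ for $n\ge1$.

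This single estimate already covers $n=2k-1$, so no separate parity check is needed there. Your fallback paragraph can be dropped.
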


\begin{pf}
Set $P_k(x) = R_k(x) + (x-1)^{2k}$. Using binomial expansion, we obtain
$$P_k(x) = 2\sum_{j=0}^k d_{k,j} x^{2j}, ~ \text{where} \quad d_{k,j} = \binom{2k}{2j} \frac{4^j - 4^{k-j}}{4^k - 1},$$
and $P_k(1) = 0$. Define $S_k(x) = P_{k+1}(x) - x^2 P_k(x) = \sum_{j=0}^{k+1} c_j x^{2j}$. We have $c_0 = -2$, $c_{k+1} = 0$, and for $1 \le j \le k$
$$c_j = 2\binom{2k}{2j-2} \frac{4^{j-1}}{4^k - 1} T_j,$$
where $T_j = \Lambda_j(1 - t_j) + 4t_j - 1$, with $t_j = 4^{k+1-2j}$ and
$$\Lambda_j = 4\frac{4^k - 1}{4^{k+1} - 1} \frac{(2k+2)(2k+1)}{2j(2j-1)}.$$
The sequence $\Lambda_j$ is strictly decreasing with respect to $j$, and
$$\Lambda_j\ge \Lambda_k=4\frac{4^k - 1}{4^{k+1} - 1} \frac{(2k+2)(2k+1)}{2k(2k-1)}>1,~k\ge 1.$$
We distinguish two cases based on $t_j$.\\
\noindent
\textbf{Case 1}: If $t_j \le 1$, then $\Lambda_j > 1$, and hence
$$T_j = (\Lambda_j - 1)(1 - t_j) + 3t_j > 0.$$
\\
\noindent
\textbf{Case 2}:
If $t_j > 1$, then we can rewrite
\begin{align}\label{sign}
    T_j = (t_j - 1) \left( 4 + \frac{3}{t_j - 1} - \Lambda_j \right).
\end{align}
As $j$ increases, $t_j$ strictly decreases. Consequently, the term $4 + \frac{3}{t_j - 1}$ is strictly increasing, while $\Lambda_j$ is strictly decreasing. Hence, \eqref{sign} shows that the signs of $T_j$, and therefore of $c_j$, can change at most once, and only from negative to positive. We already know $c_0 < 0$, $c_k > 0$, and the rest of the coefficient sequence possesses exactly one sign change. 

By Descartes' rule of signs, $S_k(x)$ has exactly one positive zero. Since $S_k(1) = P_{k+1}(1) - P_k(1) = 0$, that unique positive zero is exactly $x=1$. Thus, for all $x > 1$, we have $S_k(x) > 0$.

Finally, 
$$R_{k+1}(x) - x^2 R_k(x) = S_k(x) + (2x-1)(x-1)^{2k}> 0.$$
Hence, $R_{k+1}(x) > x^2 R_k(x)$. Moreover $R_1(x) = (x-1)(x+3) > 0$ for $x>1$. By induction, this guarantees $R_k(x) > 0$. Therefore, for $x^2 > 1$, we conclude
$$R_{k+1}(x) > x^2 R_k(x) > R_k(x).$$
\end{pf}

\begin{lem}\label{Fbeta_beta_monotonicity}
For $\beta>0$, define
\begin{align}
    F_\beta(\xi)=\frac{m_{1,\beta}(\xi)}{m_{0,\beta}(\xi)},
    \quad \xi\in(0,1).\nonumber
\end{align}
For any fixed $\xi\in(0,1)$, the function $\beta\longmapsto F_\beta(\xi)$ is strictly increasing on $(0,\infty)$. Consequently,$$F_\beta(\xi) > \lim_{\beta\to 0^+} F_\beta(\xi) = \frac{(1-\xi)(3\xi+1)}{\xi^2}.$$
\end{lem}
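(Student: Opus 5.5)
The approach is to expand $m_{0,\beta}(\xi)$ and $m_{1,\beta}(\xi)$ as power series in $a:=2\pi\beta$. The coefficient ratios should turn out to be exactly the quantities $R_k(x)$ of Lemma \ref{lem:R_k_increasing}, with $x=1/\xi>1$. The monotonicity in $\beta$ then follows from the standard monotone-ratio principle for power series.

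\textbf{Step 1: the two series.} Write $m_{1,\beta}(\xi)=\sum_{k=1}^4 a_kA_{k,1}e^{a\xi\omega_k}$ with $A_{k,1}=-\sum_{j\ne k}e^{a\omega_j}$ from \eqref{Defn Aks}. Using $a_1=a_3=\tfrac16$, $a_2=a_4=-\tfrac16$ and pairing $\pm$ terms, $m_{1,\beta}(\xi)$ becomes a finite combination of hyperbolic cosines with arguments
$$a(\xi\pm1),\quad a(\xi\pm2),\quad a(2\xi\pm1),\quad a(2\xi\pm2).$$
The terms $\cosh(a\xi)$ and $\cosh(2a\xi)$ do not occur, since $j\neq k$. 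Expanding each $\cosh$ gives
$$m_{1,\beta}(\xi)=\sum_{k\ge0}N_k(\xi)\frac{a^{2k}}{(2k)!},\qquad m_{0,\beta}(\xi)=\sum_{k\ge1}D_k(\xi)\frac{a^{2k}}{(2k)!},\qquad D_k(\xi)=\frac{(1-4^k)\,\xi^{2k}}{3}<0.$$
Here $N_0=-3\sum_k a_k=0$ by \eqref{boundary m relation}, so both series start at $k=1$.

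\textbf{Step 2: identify the coefficient ratio with $R_k$.} I would factor $\xi^{2k}$ out of each $N_k$ and rewrite the arguments in terms of $x=1/\xi$:
$$\xi(2x+1)=2+\xi,\qquad \xi(x+2)=1+2\xi,\qquad \xi(x-1)=1-\xi,$$
and similarly for the remaining arguments. Collecting terms, I expect
$$\frac{N_k(\xi)}{D_k(\xi)}=R_k(x),\qquad x=\frac1\xi,$$
possibly up to an additive constant independent of $k$, which would not affect monotonicity. This bookkeeping of the cosh terms and signs is the main obstacle: it must reproduce exactly the normalisation $1/(4^k-1)$ and the extra $-(x-1)^{2k}$ term in $R_k$. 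Two checks are available. At $k=1$ one should get
$$R_1(x)=(x-1)(x+3)=\frac{(1-\xi)(3\xi+1)}{\xi^2},$$
which matches the claimed limit. The constraint $N_0=0$ gives a further consistency check.

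\textbf{Step 3: monotone ratio and the limit.} Put $t=a^2$, $b_k=-D_k/(2k)!>0$ and $c_k=-N_k/(2k)!$. Then
$$F_\beta(\xi)=\frac{\sum_k c_kt^k}{\sum_k b_kt^k}\qquad\text{and}\qquad \frac{c_k}{b_k}=\frac{N_k}{D_k}=R_k(x).$$
By Lemma \ref{lem:R_k_increasing}, $R_k(x)$ is strictly increasing in $k$ for every $x>1$. The classical lemma (a ratio of power series with positive denominator coefficients and strictly increasing coefficient ratios is strictly increasing in $t>0$) then shows that $\beta\mapsto F_\beta(\xi)$ is strictly increasing. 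I would include its one-line proof: the derivative's numerator equals $\sum_{j<k}(k-j)\,b_jb_k\,(c_k/b_k-c_j/b_j)\,t^{j+k-1}>0$.

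Finally, letting $\beta\to0^+$, i.e. $t\to0$, the ratio tends to $c_1/b_1=R_1(1/\xi)=\frac{(1-\xi)(3\xi+1)}{\xi^2}$. Strict monotonicity then gives $F_\beta(\xi)>\lim_{\beta\to0^+}F_\beta(\xi)$ for every $\beta>0$.
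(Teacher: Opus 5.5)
Your proposal is correct and follows essentially the same route as the paper: expand $m_{1,\beta}$ and $m_{0,\beta}$ in even powers of $\beta$, identify the coefficient ratio with $R_k(1/\xi)$, and combine Lemma \ref{lem:R_k_increasing} with the monotone-ratio principle for power series (the paper uses $a=\pi\beta$ rather than $2\pi\beta$, which is cosmetic). If you carry out your Step 2, you get $N_k(\xi)=\tfrac13\bigl[(1+2\xi)^{2k}+(1-2\xi)^{2k}+(2-2\xi)^{2k}-(2+\xi)^{2k}-(1-\xi)^{2k}-(2-\xi)^{2k}\bigr]$, so $N_k/D_k=R_k(1/\xi)$ exactly, with no additive constant; also, the arguments $a(\xi+1)$ and $a(2\xi+2)$ in your list do not actually occur.
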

\begin{pf}
Let $a=\pi\beta$. Recall that
\begin{align*}
m_{0,\beta}(\xi)
&=
-\frac{2}{3}
\sinh(\pi\beta\xi)\sinh(3\pi\beta\xi),
\end{align*}
 and
\begin{align*}
m_{1,\beta}(\xi)
=&m_{0,\beta}(\xi+1)-2\big(
\cosh(2\pi\beta)+\cosh(4\pi\beta)
\big)m_{0,\beta}(\xi).
\end{align*}
Using the product-to-sum formulas for hyperbolic functions and simplifying, we can express $F_\beta(\xi)$ can be written as
$$F_\beta(\xi) = \frac{N(a)}{D(a)},$$
where
\begin{align*}
N(a) =& \frac{1}{2}\big[ \cosh(2a(2+\xi)) + \cosh(2a(2-\xi)) + \cosh(2a(1-\xi)) \\
&\hspace{2cm} - \cosh(2a(2\xi+1)) - \cosh(2a(1-2\xi)) - \cosh(4a(1-\xi)) \big],
\end{align*}
and
\begin{align*}
D(a) =& \frac{1}{2}\big[ \cosh(4a\xi) - \cosh(2a\xi) \big].
\end{align*}
Expanding the hyperbolic cosines into their Maclaurin
series, we obtain
$$
N(a)
=
\sum_{k=1}^{\infty}
\frac{(2a)^{2k}}{(2k)!}\,C_k(\xi),
\quad
D(a)
=
\sum_{k=1}^{\infty}
\frac{(2a)^{2k}}{(2k)!}\,d_k(\xi),
$$
where
\begin{align*}
C_k(\xi)
=&
(2+\xi)^{2k}
+(2-\xi)^{2k}
+(1-\xi)^{2k}-(2\xi+1)^{2k}
-(1-2\xi)^{2k}
-(2-2\xi)^{2k},
\end{align*}
and
$$
d_k(\xi)
=
(2\xi)^{2k}-\xi^{2k}
=
\xi^{2k}(4^k-1)>0.
$$

We use the following elementary power-series ratio principle,
if
$$
A(t)=\sum_{k\ge1}a_k t^k,
\quad
B(t)=\sum_{k\ge1}b_k t^k,
\quad b_k>0,
$$
and the sequence $a_k/b_k$ is strictly increasing, then
$A(t)/B(t)$ is strictly increasing for $t>0$
throughout the common interval of convergence.

In the present case, the common positive factor
$\frac{2^{2k}}{(2k)!}$
cancels in the coefficient ratios. Thus it suffices to show that $\frac{C_k(\xi)}{d_k(\xi)}$ is strictly increasing with respect to $k$.

Set $x=\frac1{\xi}>1$. Factoring $\xi^{2k}$ from $C_k(\xi)$ gives
$$
\frac{C_k(\xi)}{d_k(\xi)}
=
R_k(x),
$$
where
$$
R_k(x)
=
\frac{
(2x+1)^{2k}
+(2x-1)^{2k}
-(x+2)^{2k}
-(x-2)^{2k}}
{4^k-1}
-(x-1)^{2k}.
$$
By Lemma~\ref{lem:R_k_increasing},
$$
R_{k+1}(x)>R_k(x),
\quad x>1.
$$
Hence
$$
\frac{C_{k+1}(\xi)}{d_{k+1}(\xi)}
>
\frac{C_k(\xi)}{d_k(\xi)},
\quad k\ge1.
$$

Applying the power-series ratio principle with
$t=a^2$ therefore shows that
$$
a\longmapsto \frac{N(a)}{D(a)}
$$
is strictly increasing for $a>0$. Since $a=\pi\beta$,
we conclude that
$$
\beta\longmapsto F_\beta(\xi)
$$
is strictly increasing on $(0,\infty)$. Consequently,
$$
F_\beta(\xi)
>
\lim_{\beta\to0^+}F_\beta(\xi)=\frac{(1-\xi)(3\xi+1)}{\xi^2},
\quad
\beta>0,\quad0<\xi<1.
$$
\end{pf}

\begin{lem}\label{Fx function}
For $\beta>0$, define
\begin{align}
F_\beta(\xi)=\frac{m_{1,\beta}(\xi)}{m_{0,\beta}(\xi)},
 \quad \xi\in(0,1).\nonumber
\end{align}
Then $F_\beta$ is strictly decreasing and positive on $(0,1)$.
Moreover,
\begin{align}\label{Fbeta_bounds}
 F_\beta\left(\frac{4}{5}\right)>1,\quad   F_\beta\left(\frac{2}{3}\right)>\frac94,
    \quad
    F_\beta\left(\frac{5}{6}\right)>\frac{21}{25},
    \qtq{for} \beta>0.
\end{align}
\end{lem}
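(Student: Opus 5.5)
The plan is to read off the three numerical bounds from the $\beta\to0^+$ limit in Lemma \ref{Fbeta_beta_monotonicity}, get positivity the same way, and prove monotonicity in $\xi$ directly from the product formula for $m_{0,\beta}$.

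\textbf{Positivity and the numerical bounds.} By Lemma \ref{Fbeta_beta_monotonicity}, for every $\beta>0$ and $\xi\in(0,1)$,
$$F_\beta(\xi)>F_0(\xi):=\frac{(1-\xi)(3\xi+1)}{\xi^2}.$$
Since $F_0(\xi)>0$ on $(0,1)$, positivity of $F_\beta$ follows at once. Evaluating $F_0$ at the three points gives
$$F_0\left(\tfrac45\right)=\frac{\tfrac15\cdot\tfrac{17}{5}}{\tfrac{16}{25}}=\frac{17}{16}>1,\qquad F_0\left(\tfrac23\right)=\frac{\tfrac13\cdot 3}{\tfrac49}=\frac94,\qquad F_0\left(\tfrac56\right)=\frac{\tfrac16\cdot\tfrac72}{\tfrac{25}{36}}=\frac{21}{25}.$$
Combined with the strict inequality $F_\beta>F_0$, this yields \eqref{Fbeta_bounds} for all $\beta>0$.

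\textbf{Monotonicity in $\xi$.} Put $a=\pi\beta$. I will use the recurrence for $m_{1,\beta}$ to write
$$F_\beta(\xi)=\frac{m_{0,\beta}(\xi+1)}{m_{0,\beta}(\xi)}-2\big(\cosh(2a)+\cosh(4a)\big).$$
The subtracted constant does not depend on $\xi$. So it suffices to show that $\xi\mapsto m_{0,\beta}(\xi+1)/m_{0,\beta}(\xi)$ is strictly decreasing on $(0,1)$. I will use the product form already recorded in the proof of Lemma \ref{Fbeta_beta_monotonicity},
$$m_{0,\beta}(\xi)=-\tfrac23\sinh(a\xi)\sinh(3a\xi).$$
With it, the ratio factors as
$$\frac{m_{0,\beta}(\xi+1)}{m_{0,\beta}(\xi)}=\frac{\sinh(a\xi+a)}{\sinh(a\xi)}\cdot\frac{\sinh(3a\xi+3a)}{\sinh(3a\xi)}.$$
Each factor has the form
$$\frac{\sinh(u+c)}{\sinh u}=\cosh c+\sinh c\,\coth u,\qquad c>0,\ u>0.$$
Because $\coth$ is strictly decreasing on $(0,\infty)$ and $\sinh c>0$, each factor is positive and strictly decreasing in $\xi>0$. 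A product of two positive, strictly decreasing functions is strictly decreasing. Hence $F_\beta$ is strictly decreasing on $(0,1)$.

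\textbf{Main obstacle.} The only step needing care is confirming the product form of $m_{0,\beta}$, which is a product-to-sum check: $\cosh x-\cosh y=-2\sinh\frac{x+y}{2}\sinh\frac{y-x}{2}$ with $x=2a\xi$, $y=4a\xi$ gives $\tfrac13(\cosh 2a\xi-\cosh 4a\xi)=-\tfrac23\sinh(3a\xi)\sinh(a\xi)$. After that, everything else is elementary.
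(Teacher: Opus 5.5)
Your proof is correct. For the numerical bounds it is exactly the paper's argument: evaluate the $\beta\to0^+$ limit $F_0(\xi)=(1-\xi)(3\xi+1)/\xi^2$ and invoke the strict $\beta$-monotonicity of Lemma~\ref{Fbeta_beta_monotonicity}.

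Your monotonicity step is also the paper's in different form. The paper differentiates, using that the logarithmic derivative $m_{0,\beta}'/m_{0,\beta}=\pi\beta[\coth(\pi\beta\xi)+3\coth(3\pi\beta\xi)]$ is strictly decreasing. Your identity $\sinh(u+c)/\sinh u=\cosh c+\sinh c\,\coth u$ reaches the same conclusion without calculus.

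The one real difference is positivity. The paper notes that the formula for $F_\beta$ is strictly decreasing on all of $(0,\infty)$ and computes
\[
F_\beta(1)=\frac{\sinh(2\pi\beta)\sinh(6\pi\beta)}{\sinh(\pi\beta)\sinh(3\pi\beta)}-2\big(\cosh(2\pi\beta)+\cosh(4\pi\beta)\big)=0,
\]
so $F_\beta>F_\beta(1)=0$ on $(0,1)$. You instead get positivity from $F_\beta>F_0>0$. Since you need Lemma~\ref{Fbeta_beta_monotonicity} for the bounds anyway, this adds no new dependency. The paper's route, however, keeps positivity independent of the power-series and Descartes-rule machinery behind Lemmas~\ref{lem:R_k_increasing} and~\ref{Fbeta_beta_monotonicity}. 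Positivity is equivalent to $m_{1,\beta}<0$ on $(0,1)$, a sign fact used throughout Section~\ref{section 4}.

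Your own factorization gives this for free. Your monotonicity argument holds for every $\xi>0$. At $\xi=1$ each factor has $u=c$ and equals $\cosh c+\sinh c\coth c=2\cosh c$. So the ratio is $4\cosh(\pi\beta)\cosh(3\pi\beta)=2\big(\cosh(2\pi\beta)+\cosh(4\pi\beta)\big)$, and $F_\beta(1)=0$.
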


\begin{pf}
From the definition of $m_{0,\beta}(\xi)$ and $m_{1,\beta}(\xi)$, we have
\begin{align}\label{Fbeta_explicit}
F_\beta(\xi)
=&
\frac{m_{0,\beta}(\xi+1)}
     {m_{0,\beta}(\xi)}
-
2\big(
\cosh(2\pi\beta)+\cosh(4\pi\beta)
\big)
\nonumber\\
=&
\frac{
\sinh\big(\pi\beta(\xi+1)\big)
\sinh\big(3\pi\beta(\xi+1)\big)
}{
\sinh(\pi\beta\xi)
\sinh(3\pi\beta\xi)
}-2\big(
\cosh(2\pi\beta)+\cosh(4\pi\beta)
\big).
\end{align}

To prove monotonicity, observe that
\begin{align*}
\frac{m_{0,\beta}'(\xi)}
     {m_{0,\beta}(\xi)}
=
\pi\beta
\left[
\coth(\pi\beta\xi)
+
3\coth(3\pi\beta\xi)
\right].
\end{align*}
Since $\coth x$ is strictly decreasing on $(0,\infty)$,
the function
\begin{align*}
\xi\longmapsto
\frac{m_{0,\beta}'(\xi)}
     {m_{0,\beta}(\xi)}
\end{align*}
is strictly decreasing. Therefore,
\begin{align*}
\left(
\frac{m_{0,\beta}(\xi+1)}
     {m_{0,\beta}(\xi)}
\right)'
&=
\frac{m_{0,\beta}(\xi+1)}
     {m_{0,\beta}(\xi)}
\left[
\frac{m_{0,\beta}'(\xi+1)}
     {m_{0,\beta}(\xi+1)}
-
\frac{m_{0,\beta}'(\xi)}
     {m_{0,\beta}(\xi)}
\right]
<0.
\end{align*}
Hence $F_\beta$ is strictly decreasing on $(0,1)$. From \eqref{Fbeta_explicit},
\begin{align*}
F_\beta(1)
=&
\frac{
\sinh(2\pi\beta)\sinh(6\pi\beta)}
{\sinh(\pi\beta)\sinh(3\pi\beta)}-
2\big(
\cosh(2\pi\beta)+\cosh(4\pi\beta)
\big)=0.
\end{align*}
Since $F_\beta$ is strictly decreasing on $(0,1)$,
it follows that
$$
F_\beta(\xi)>F_\beta(1)=0,
\quad 0<\xi<1.
$$
Thus $F_\beta$ is strictly decreasing and positive on $(0,1)$.

As $\beta\to0^+$, we obtain
\begin{align}\label{Fbeta_limit}
\lim_{\beta\to0^+}F_\beta(\xi)
&=
\frac{(\xi+1)^2}{\xi^2}-4
=
\frac{(1-\xi)(3\xi+1)}{\xi^2}.
\end{align}
In particular,
\begin{align*}
\lim_{\beta\to0^+}
F_\beta\left(\frac23\right)
=\frac94,\quad
\lim_{\beta\to0^+}
F_\beta\left(\frac45\right)
=\frac{17}{16},\qtq{and}
\lim_{\beta\to0^+}
F_\beta\left(\frac56\right)
=\frac{21}{25}.
\end{align*}
From Lemma \ref{Fbeta_beta_monotonicity}, we conclude lower bounds in \eqref{Fbeta_bounds}.
\end{pf}

\begin{lem}\label{lem:reciprocal_gap_monotone}
For $0<a<b<1$, the function
$$
\beta\longmapsto
\frac{1}{F_\beta(a)}-\frac{1}{F_\beta(b)}
$$
is strictly increasing on $(0,\infty)$.
\end{lem}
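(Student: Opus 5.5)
The plan is to turn the two-point statement into a pointwise sign condition on a mixed derivative, and then prove that condition with the power-series ratio technique already used in Lemma \ref{Fbeta_beta_monotonicity}.

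\textbf{Step 1 (reduction to a mixed derivative).} Write $G_\beta(\xi)=1/F_\beta(\xi)=m_{0,\beta}(\xi)/m_{1,\beta}(\xi)$. By Lemma \ref{Fx function}, $F_\beta$ is smooth, positive and strictly decreasing on $(0,1)$, so $G_\beta$ is smooth and strictly increasing there. For $0<a<b<1$,
$$\frac{1}{F_\beta(a)}-\frac{1}{F_\beta(b)}=-\int_a^b \partial_\xi G_\beta(\xi)\,d\xi .$$
Differentiating under the integral sign (everything is real-analytic in $(\beta,\xi)$ on $(0,\infty)\times[a,b]$), it suffices to show
$$\partial_\beta\partial_\xi G_\beta(\xi)<0,\qquad \beta>0,\ 0<\xi<1 .$$
Equivalently, for each fixed $\beta$ the map $\xi\mapsto \partial_\beta G_\beta(\xi)=-\partial_\beta F_\beta(\xi)/F_\beta(\xi)^2$ is strictly decreasing. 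Since only strict monotonicity in $\beta$ is claimed, it is enough to have the weak inequality $\partial_\beta\partial_\xi G_\beta\le 0$ everywhere and strictness off a null set.

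\textbf{Step 2 (power series in $t=a^2$, $a=\pi\beta$).} With the notation of Lemma \ref{Fbeta_beta_monotonicity},
$$G_\beta(\xi)=\frac{D(a;\xi)}{N(a;\xi)},\qquad D=\sum_{k\ge1}\frac{(2a)^{2k}}{(2k)!}\,d_k(\xi),\quad N=\sum_{k\ge1}\frac{(2a)^{2k}}{(2k)!}\,C_k(\xi),$$
where $d_k(\xi)=\xi^{2k}(4^k-1)$ and $C_k(\xi)=\xi^{2k}d_k(\xi)R_k(1/\xi)$. Rather than differentiate in $\xi$, I would work with the two-point difference directly:
$$G_\beta(a)-G_\beta(b)=\frac{D(a)N(b)-D(b)N(a)}{N(a)N(b)}$$
(here $a,b$ in the arguments are the two frequencies; $\beta$ enters through $t$). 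Both the numerator and the denominator are power series in $t$, with coefficients given by Cauchy products of $\{d_k\},\{C_k\}$ evaluated at $a$ and at $b$. The denominator coefficients are positive, since $C_k(\xi)=\xi^{2k}d_k(\xi)R_k(1/\xi)>0$ by Lemma \ref{lem:R_k_increasing} and $R_1>0$.

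\textbf{Step 3 (ratio principle).} Apply the ratio principle stated in the proof of Lemma \ref{Fbeta_beta_monotonicity} to this quotient. The task is to show that the ratio of the $n$-th numerator coefficient to the $n$-th denominator coefficient is strictly increasing in $n$. After dividing by the leading terms, this amounts to an interlacing between the sequences $d_k(a)/C_k(a)=1/R_k(1/a)$ and $d_k(b)/C_k(b)=1/R_k(1/b)$. I expect to need two facts:
\begin{itemize}
\item $1/R_k(x)$ is decreasing in $k$, which is already available from Lemma \ref{lem:R_k_increasing};
\item the rate of decrease is faster for the larger $x=1/a$. This means showing that $R_{k+1}(x)/R_k(x)$ is increasing in $x$, a log-supermodularity statement.
\end{itemize}
For the second fact I would try first the same Descartes-rule-of-signs argument used in Lemma \ref{lem:R_k_increasing}, applied to $x\,\partial_x R_{k+1}\cdot R_k-x\,\partial_x R_k\cdot R_{k+1}$.

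\textbf{Main obstacle.} Step 3 is where I expect the difficulty: the Cauchy-product coefficients do not obviously have monotone ratios. If the interlacing argument fails, my fallback is Step 1's pointwise form. There I would write $\partial_\beta G_\beta(\xi)$ as a logarithmic-derivative combination using $\coth$ terms, in the style of the monotonicity proof of Lemma \ref{Fx function}, and reduce the claim to the strict decrease of $\coth$. As a consistency check at the endpoint $\beta\to0^+$, the limit of $G_\beta$ is $\xi^2/((1-\xi)(3\xi+1))$, from Lemma \ref{Fbeta_beta_monotonicity}.
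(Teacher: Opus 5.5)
\textbf{Step 1 is fine; the gap is in Step 3.} Your Step 1 is correct and is exactly the paper's reduction: it suffices to show $\partial_\xi\partial_\beta\bigl(1/F_\beta(\xi)\bigr)<0$. Nothing after Step 1 establishes this sign, and the reduction you sketch in Step 3 is not right.

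Set $t=(\pi\beta)^2$, $v_k(\xi)=\frac{4^k}{(2k)!}d_k(\xi)$ and $x_a=1/a>x_b=1/b$. Note that $C_k(\xi)=d_k(\xi)R_k(1/\xi)$, without the extra factor $\xi^{2k}$ you wrote. Let $\mathcal N_n$ and $\mathcal D_n$ be the $t^n$-coefficients of $D(a)N(b)-D(b)N(a)$ and $N(a)N(b)$. Then
\[
\frac{\mathcal N_n}{\mathcal D_n}=\frac{\sum_{j+k=n}\omega_{jk}\bigl[1/R_j(x_a)-1/R_k(x_b)\bigr]}{\sum_{j+k=n}\omega_{jk}},\qquad \omega_{jk}=v_j(a)v_k(b)R_j(x_a)R_k(x_b).
\]
This is a weighted average over all splittings $j+k=n$, with weights carrying factors $(b/a)^{k-j}$. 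It is not an interlacing of the two sequences $1/R_k(x_a)$ and $1/R_k(x_b)$. Passing from $n$ to $n+1$ moves both the terms and the weights, and some terms move the wrong way; for example, $1/R_2(x_a)-1/R_1(x_b)<1/R_1(x_a)-1/R_1(x_b)$.

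Already the first comparison $\mathcal N_3/\mathcal D_3>\mathcal N_2/\mathcal D_2$ is needed. It is equivalent to the sign of $\partial_t$ of your quotient at $t=0$. For all pairs $x_a>x_b>1$ it is equivalent to
\[
x\longmapsto\frac{x^2R_1(x)^2}{R_2(x)-R_1(x)}=\frac{x(x-1)(x+3)^2}{x^2+5x-2}\ \text{ being increasing on }(1,\infty),
\]
which is neither of your two facts. Your log-supermodularity ($R_2/R_1$ increasing in $x$) makes the factor $R_1/(R_2-R_1)$ decrease, so it works against this inequality. What rescues it is the factor $x^2R_1(x)$ coming from the weights $\xi^{2k}$. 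For general $n$ you would need a new inequality on these double sums. Since the ratio principle is only a sufficient condition, there is no guarantee its hypothesis even holds for all $n$.

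\textbf{The fallback does not close the gap.} Write $F_\beta=m_{0,\beta}(\xi+1)/m_{0,\beta}(\xi)-2(\cosh 2\pi\beta+\cosh 4\pi\beta)$. The subtracted constant, together with the division by $F_\beta^2$ in $\partial_\beta(1/F_\beta)$, destroys the clean logarithmic-derivative structure used for Lemma~\ref{Fx function}. There is no visible way to reduce the sign of the mixed derivative to the monotonicity of $\coth$, and you do not supply one.

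The paper's proof shows what is actually required. With $Y=\tanh(\pi\beta\xi)$ and $Z=\tanh(\pi\beta(1-\xi))$, the numerator of the mixed derivative is $C_0+\alpha C_1+\gamma C_2$. Here $\alpha=\operatorname{arctanh}Y$ and $\gamma=\operatorname{arctanh}Z$ are the explicit $\pi\beta\xi$ and $\pi\beta(1-\xi)$ factors produced by $\partial_\beta$. Its negativity comes from:
\begin{enumerate}
\item exact factorizations of $C_2$ and of $B=C_0+YC_1+ZC_2$;
\item the bounds $t^3/3<\operatorname{arctanh}t-t<t^3/(3(1-t^2))$;
\item a case split on the sign of $C_1$;
\item explicit sign certificates for the polynomials $P_2$, $P_B$ and $P_R$.
\end{enumerate}
A quantitative two-variable estimate of this kind is the missing ingredient in your proposal.
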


\begin{pf}
Let
$$
A=\pi\beta,
\quad
U_A(x)=\frac{1}{F_\beta(x)},
\quad 0<x<1.
$$
By the Fundamental Theorem of Calculus, we have
$$\frac{\partial}{\partial \beta} \left( \frac{1}{F_\beta(x_1)} - \frac{1}{F_\beta(x_2)} \right) = -\pi \int_{x_1}^{x_2} \partial_x\partial_A U_A(x) \, dx.$$
It is enough to prove that
\begin{equation}\label{eq:mixed_U_negative}
\partial_x\partial_A U_A(x)<0,
\quad A>0,\quad 0<x<1.
\end{equation}
Let
$$
Y=\tanh(Ax),
\quad
Z=\tanh(A(1-x)).
$$
Then
$$
0<Y,Z<1,
\quad
Ax=\operatorname{arctanh}Y,
\quad
A(1-x)=\operatorname{arctanh}Z.
$$
Then
\begin{equation}\label{eq:U_YZ}
U_A(x)
=
\frac{
Y^2(1-Y^2)(Y^2+3)(1-Z^2)^2
}{
Z\,H(Y,Z)
},
\end{equation}
where
\begin{align*}
H(Y,Z)
=&
3Y^6Z^3+9Y^6Z+20Y^5Z^2+12Y^5
+11Y^4Z^3+33Y^4Z\\
&+56Y^3Z^2+8Y^3
+17Y^2Z^3+51Y^2Z
+20YZ^2+12Y+Z^3+3Z.
\end{align*}
In particular,
$$
H(Y,Z)>0,\quad 0<Y,Z<1.
$$

Let
$$
\alpha=\operatorname{arctanh}Y=Ax,
\quad
\gamma=\operatorname{arctanh}Z=A(1-x).
$$
Exact differentiation of \eqref{eq:U_YZ} gives
\begin{equation}\label{eq:mixed_N}
\partial_x\partial_AU_A(x)
=
\frac{N(Y,Z)}{Z^3H(Y,Z)^3},
\end{equation}
where
$$
N(Y,Z)=C_0(Y,Z)+\alpha C_1(Y,Z)+\gamma C_2(Y,Z).
$$
Define
$$
h(t)=\operatorname{arctanh}t-t.
$$
Since $\alpha=Y+h(Y)$ and $\gamma=Z+h(Z)$,
we may rewrite
\begin{equation}\label{eq:N_B}
N(Y,Z)
=
B(Y,Z)+C_1(Y,Z)h(Y)+C_2(Y,Z)h(Z),
\end{equation}
where
$B=C_0+YC_1+ZC_2.$ The two coefficients needed below admit the exact factorizations
\begin{align}
C_2
=&
-2Y(Y-1)(Y+1)(Z-1)^2(Z+1)^2P_2(Y,Z),
\label{eq:C2_factor}
\\
B
=&
-2YZ(Y-1)(Y+1)(Y+Z)(Z-1)^2(Z+1)^2P_B(Y,Z).
\label{eq:B_factor}
\end{align}
The explicit polynomial certificates given in \ref{polynomial-certificates} show that
\begin{equation}\label{eq:P2_PB_sign}
P_2(Y,Z)<0,
\quad
P_B(Y,Z)<0,
\quad 0<Y,Z<1.
\end{equation}

Since $Y-1<0$ and all the remaining factors in
\eqref{eq:C2_factor}--\eqref{eq:B_factor} are positive,
\eqref{eq:P2_PB_sign} implies
\begin{equation}\label{eq:B_C2_negative}
B(Y,Z)<0,
\quad
C_2(Y,Z)<0.
\end{equation}

We now use
$$
h(t)
=
\sum_{k=1}^{\infty}\frac{t^{2k+1}}{2k+1},
\quad 0<t<1.
$$
Hence
\begin{equation}\label{eq:h_bounds}
\frac{t^3}{3}
<
h(t)
<
\frac{t^3}{3(1-t^2)}.
\end{equation}

We distinguish two cases.
\\
\noindent
\textbf{Case 1}: If $C_1(Y,Z)\le0$, then $h(Y),h(Z)>0$, and
\eqref{eq:N_B} together with \eqref{eq:B_C2_negative} gives
$$
N(Y,Z)
<
B(Y,Z)<0.
$$
\textbf{Case 2}: If $C_1(Y,Z)>0$. Using \eqref{eq:h_bounds}, we obtain
\begin{align}
N(Y,Z)
&<
B+C_1\frac{Y^3}{3(1-Y^2)}+C_2\frac{Z^3}{3}=R(Y,Z).
\label{eq:R_upper}
\end{align}
Therefore,
\begin{equation}\label{eq:R_factor}
R(Y,Z)=-\frac23\,YZ(1-Z^2)^2P_R(Y,Z).
\end{equation}
The remaining polynomial satisfies
\begin{equation}\label{eq:PR_positive}
P_R(Y,Z)>0,
\quad 0<Y,Z<1.
\end{equation}
For completeness, this is proved in \ref{polynomial-certificates_P_R}. From \eqref{eq:R_factor}, we have
$R(Y,Z)<0.$
Consequently, in the case $C_1>0$, \eqref{eq:R_upper} gives
$N(Y,Z)<R(Y,Z)<0.$

Combining the two cases,
$$
N(Y,Z)<0,
\quad 0<Y,Z<1.
$$
Finally, since $Z^3H(Y,Z)^3>0$, equation
\eqref{eq:mixed_N} yields
$$
\partial_x\partial_AU_A(x)<0,
\quad A>0,\quad 0<x<1.
$$
Therefore
$$
\frac{\partial}{\partial\beta}
\left(
\frac1{F_\beta(a)}-\frac1{F_\beta(b)}
\right)>0,
$$
and hence
$$
\beta\longmapsto
\frac1{F_\beta(a)}-\frac1{F_\beta(b)}
$$
is strictly increasing on $(0,\infty)$.
\end{pf}

\subsection{Sign of \texorpdfstring{$P_2$}{P2} and \texorpdfstring{$P_B$}{PB}}\label{polynomial-certificates}

Collecting $-P_2(Y,Z)$ in powers of $Z$, write
$$
-P_2(Y,Z)=\sum_{j=0}^{8}a_j(Y)Z^j.
$$
An exact symbolic expansion, reproduced by the \href{https://github.com/Riya74012/Gabor-rational-windows-symbolic-verification}
{MATLAB} code, shows that all $a_0,\ldots,a_7$ are strictly positive. The only negative
monomial occurs in
$$a_8(Y)=Y\left(
37Y^{12}+157Y^{10}+273Y^8+1603Y^6
+1471Y^4+495Y^2+75-15Y^{14}
\right).
$$
Since $0<Y<1$,
$$
37Y^{12}-15Y^{14}
=
Y^{12}(37-15Y^2)>0.
$$
Consequently $a_j(Y)>0$ for every $j=0,\ldots,8$, and hence
$$
-P_2(Y,Z)>0,
\quad 0<Y,Z<1.
$$
Thus $P_2(Y,Z)<0.$

Similarly, 
$$
-P_B(Y,Z)=\sum_{j=0}^{7}b_j(Y)Z^j.
$$
An exact symbolic expansion, reproduced by the \href{https://github.com/Riya74012/Gabor-rational-windows-symbolic-verification}
{MATLAB} code shows that
$b_0,b_1,b_4,b_5,b_6$ have nonnegative coefficients and are
nonzero. The only negative monomials occur in $b_2,b_3,b_7$.
More precisely,
\begin{align}\label{terms}
    37Y^{13}-15Y^{15},
\quad
3921Y^{13}-75Y^{15},
\quad
8356Y^{12}-78Y^{14}.
\end{align}
terms in \eqref{terms} are positive for $Y\in (0,1)$. Consequently, $b_j(Y)>0$
for every coefficient appearing in the expansion of $-P_B$.
Therefore,
$$
-P_B(Y,Z)>0,
\quad 0<Y,Z<1,
$$
and hence
$$
P_B(Y,Z)<0.
$$

\subsection{Positivity of \texorpdfstring{$P_R$}{PR}}\label{polynomial-certificates_P_R}

Put $t=Y^2\in(0,1)$. Collecting in powers of $Z$, write
$$
P_R(Y,Z)
=
\sum_{j=0}^{10}Y^{\varepsilon_j}R_j(t)Z^j,
\quad
\varepsilon_j\in\{0,1\}.
$$

For $j=0,\ldots,9$, the coefficient polynomials admit 
$$R_j(t) = (1-t)A_j(t)+c_jt^{n_j},$$
where $A_j(t)$ is a polynomial with nonnegative coefficients with at least one strictly positive coefficient, $c_j\ge0$. Therefore,
$$R_j(t)>0, \quad 0<t<1,\quad j=0,\ldots,9.$$
A \href{https://github.com/Riya74012/Gabor-rational-windows-symbolic-verification}
{MATLAB} is available for verification.
For the last coefficient, 
$$R_{10}(t) = (1-t) \Big( 75+495t+1471t^2+1603t^3 +273t^4+157t^5+37t^6-15t^7 \Big).$$
Since $0<t<1$,
$$37t^6-15t^7 = t^6(37-15t)>0.$$
Hence $R_{10}(t)>0$, for $0<t<1$.
Therefore,
$$R_j(t)>0, \quad j=0,\ldots,10.$$
Hence, 
$$P_R(Y,Z)>0,~0<Y,Z<1.$$

\begin{lem}\label{Psi_function}
Define
$$\Psi_\beta(\xi)=\frac{m_{1,\beta}(\xi)}{m_{0,\beta}(1-\xi)}.$$
Then $\Psi_\beta(\xi)$ is strictly increasing on $[0,1)$. Consequently, $\Psi_\beta(\xi)\ge 1$.
\end{lem}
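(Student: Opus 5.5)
The plan is to establish the normalization $\Psi_\beta(0)=1$ first, and then prove monotonicity by a derivative (Wronskian-type) sign argument in the spirit of Lemmas \ref{Fx function} and \ref{Fbeta_beta_monotonicity}. For the normalization, the recurrence gives $m_{1,\beta}(0)=m_{0,\beta}(1)-2(\cosh(2\pi\beta)+\cosh(4\pi\beta))m_{0,\beta}(0)$. Since $m_{0,\beta}(0)=0$ by \eqref{boundary m relation}, this yields $m_{1,\beta}(0)=m_{0,\beta}(1)$, hence $\Psi_\beta(0)=1$. Once strict monotonicity is known, the bound $\Psi_\beta(\xi)\ge 1$ on $[0,1)$ follows at once. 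On $[0,1)$ both $m_{1,\beta}(\xi)<0$ and $m_{0,\beta}(1-\xi)<0$, so $\Psi_\beta>0$ and $\Psi_\beta$ is well defined; it blows up as $\xi\to1^-$ because $m_{0,\beta}(0)=0$.

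For monotonicity, write $\Psi_\beta(\xi)=\dfrac{-m_{1,\beta}(\xi)}{-m_{0,\beta}(1-\xi)}$ as a quotient of two positive functions. The denominator $-m_{0,\beta}(1-\xi)=\tfrac23\sinh(\pi\beta(1-\xi))\sinh(3\pi\beta(1-\xi))$ is strictly decreasing in $\xi$. It therefore suffices to show that $-m_{1,\beta}$ is nondecreasing on $[0,1)$, that is, $m_{1,\beta}'(\xi)\le 0$. If this fails near some point, I fall back on the full derivative
$$\Psi_\beta'(\xi)=\frac{m_{1,\beta}'(\xi)\,m_{0,\beta}(1-\xi)+m_{1,\beta}(\xi)\,m_{0,\beta}'(1-\xi)}{m_{0,\beta}(1-\xi)^2}.$$
In this numerator the second term is already positive, since $m_{1,\beta}<0$ and $m_{0,\beta}'(1-\xi)<0$. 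To analyze $m_{1,\beta}'$, I use the partial-fraction form $m_{1,\beta}(\xi)=\sum_k a_k(\gamma_k-S)e^{2\pi\beta\xi\omega_k}$ with $S=\sum_j\gamma_j=2(\cosh 2\pi\beta+\cosh4\pi\beta)$. Grouping the symmetric pairs $\pm\omega_k$, this expresses $m_{1,\beta}$ through $\cosh$ and $\sinh$ of $2\pi\beta\xi$ and $4\pi\beta\xi$. Product-to-sum formulas then turn the numerator of $\Psi_\beta'$ into a finite signed combination of $\cosh$ terms in $a=\pi\beta$ with arguments affine in $\xi$.

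The key step, and the expected main obstacle, is proving that this combination is positive for all $a>0$ and $\xi\in(0,1)$. My first attempt is the approach of Lemma \ref{Fbeta_beta_monotonicity}: expand in Maclaurin series in $a$ and show every coefficient, a polynomial in $\xi$, is nonnegative and not all zero. The $a^2$ and $a^4$ coefficients I will check by hand. For the general coefficient I will factor out the dominant power and aim for a Descartes-rule argument as in Lemma \ref{lem:R_k_increasing}, using that the coefficient vanishes at the endpoint where $\Psi_\beta=1$. If a uniform series argument fails, the fallback is the substitution $Y=\tanh(a\xi)$, $Z=\tanh(a(1-\xi))$ used in Lemma \ref{lem:reciprocal_gap_monotone}. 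That reduces the sign question to positivity of an explicit polynomial on $(0,1)^2$, which I would certify by collecting powers of $Z$ and checking that each coefficient polynomial in $Y$ is positive, with symbolic verification as in the appendix certificates.
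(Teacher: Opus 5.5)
Your normalization $\Psi_\beta(0)=1$ and your sign for the second Wronskian term are correct. However, the monotonicity, which is the whole content of the lemma, is never proved, and your primary reduction is false, not merely unverified. Since $-m_{1,\beta}(0)=-m_{0,\beta}(1)>0$ while $m_{1,\beta}(1)=0$ (this is $F_\beta(1)=0$ in Lemma~\ref{Fx function}), $-m_{1,\beta}$ cannot be nondecreasing on $[0,1)$. Explicitly, $m_{1,\beta}'(1)=\frac{2\pi\beta}{3}\bigl(\sinh(6\pi\beta)+3\sinh(2\pi\beta)\bigr)>0$, so near $\xi=1$ the first term $m_{1,\beta}'(\xi)\,m_{0,\beta}(1-\xi)$ of your Wronskian numerator is negative.

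The same fact makes your blow-up remark imprecise. As $\xi\to1^-$ the quotient is of type $0/0$, and it diverges only because $m_{0,\beta}(1-\xi)$ vanishes to second order while $m_{1,\beta}(\xi)$ vanishes to first order. Your fallback correctly reduces everything to positivity of the Wronskian numerator, but that is exactly the step you leave open as ``the expected main obstacle''. The Maclaurin/Descartes and $\tanh$-certificate routes are only announced, not carried out. The anchor you propose for Descartes is also misplaced: the numerator is strictly positive at $\xi=0$ (where $\Psi_\beta=1$) and instead vanishes, to second order, at $\xi=1$.

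The missing idea is the paper's substitution $U=e^{2\pi\beta}$, $X=e^{2\pi\beta\xi}\in[1,U)$, which makes $\Psi_\beta$ rational. One has $6U^2X^2m_{0,\beta}(1-\xi)=-(X-U)^2(X^2+UX+U^2)$, while $6U^2X^2m_{1,\beta}(\xi)$ is a quartic in $X$ vanishing at $X=U$. Cancelling the common factor $X-U$ gives
\[
\Psi_\beta=\frac{(U^3+U+1)X^3+(U^2-1)X^2+(U^3-U)X-(U^3+U^2+1)}{U^3-X^3},
\qquad
\frac{d\Psi_\beta}{dX}=\frac{(U^2-1)\bigl(X^4+2UX^3+3(U^2+1)^2X^2+2U^3X+U^4\bigr)}{(U^3-X^3)^2},
\]
and the derivative is visibly positive for $U>1$, with no series or certificates needed. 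Equivalently, your Wronskian numerator equals
$\frac{4\pi\beta}{9}\sinh^2(\pi\beta(1-\xi))\sinh(2\pi\beta)\bigl[2\cosh(4\pi\beta(1-\xi))+4\cosh(2\pi\beta(1-\xi))+12\cosh^2(2\pi\beta)\bigr]$.
This is why a Maclaurin argument would eventually succeed, but only once this factorization is found, and the rational form delivers it for free.
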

\begin{pf}
Let $U = e^{2\pi\beta}$ and $X=e^{2\pi\beta \xi} \in [1, U)$. Since $\frac{dX}{d\xi}=2\pi \beta X>0$, it is sufficient to prove that $\frac{d\Psi_\beta}{dX}>0$. From
$$m_{0,\beta}(\xi)=\frac16\left(X+\frac1X-X^2-\frac1{X^2}\right),$$
we obtain
$$m_{0,\beta}(\xi+1)
=\frac16\left(UX+\frac1{UX}-U^2X^2-\frac1{U^2X^2}\right).$$
Therefore, 

    \begin{align*}
        \Psi_\beta(X)&= \frac{m_{0,\beta}(XU) - \left(U+U^2+\frac1U+\frac1{U^2}\right) \cdot m_{0,\beta}(X)}{m_{0,\beta}\left(\frac{U}{X}\right)}\\
        &= \frac{X^3(U^3+U + 1) + X^2(U^2  - 1) + X(U^3 - U) - (U^3 + U^2 + 1)}{U^3 - X^3}.
    \end{align*}
    Differentiating $\Psi_\beta(X)$ with respect to $X$, we have
    $$\frac{d\Psi_\beta}{dX} = \frac{\left(U^{2} - 1\right) \left(X^{4} + 2UX^{3} + \left(3U^{4} + 6U^{2} + 3\right) X^{2} + 2U^{3} X + U^{4}\right)}{\left(X^{3} - U^{3}\right)^{2}}.$$
    Since $\beta>0$, $U > 1$, and $X \ge 1$, the leading coefficient $(U^2- 1)$ is strictly positive, and every term within the expanded polynomial in the numerator is strictly positive. Moreover, $X < U$, the denominator term $U^3 - X^3 \neq 0$. Therefore, $\frac{d\Psi_\beta}{dX} > 0$. Consequently, we have $\Psi_\beta(0)=1$. Since $\Psi_\beta$ is strictly increasing, $\Psi_\beta(\xi)\ge 1$, for $\xi\in [0,1)$.
\end{pf}

\begin{lem}\label{K_beta_greater_than_1}
Let $\beta > 0$ and $0 \le \xi \le 1/10$. For $0 < \xi \le 1/10$, define
$$K_\beta(\xi) = F_\beta\left(\frac45-\xi\right) F_\beta\left(\frac45+\xi\right) \left[ 1+ \frac{F_\beta(\frac15-\xi)-1/F_\beta(\xi)}{\Psi_\beta(\frac45+\xi) \left[ F_\beta(\frac25+\xi) + \Psi_\beta(\frac35-\xi)/F_\beta(\xi) \right]} \right].$$
Then
$$K_\beta(\xi) > 1, \quad 0 \le \xi \le \frac{1}{10}.$$
\end{lem}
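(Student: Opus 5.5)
We need to show $K_\beta(\xi)>1$ for $\beta>0$ and $0\le\xi\le 1/10$. The plan is to reduce everything to the $\beta\to0^+$ limit using the monotonicity results already established. Write $K_\beta(\xi)=F_\beta(\tfrac45-\xi)F_\beta(\tfrac45+\xi)\,(1+R_E)$, and check separately that the prefactor is large enough and that the bracket is at least $1$.

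\textbf{Step 1: sign of the bracket.} The key observation is that the correction term in the bracket is nonnegative. Its denominator $\Psi_\beta(\tfrac45+\xi)\bigl[F_\beta(\tfrac25+\xi)+\Psi_\beta(\tfrac35-\xi)/F_\beta(\xi)\bigr]$ is positive by Lemmas \ref{Fx function} and \ref{Psi_function}, since $F_\beta>0$ and $\Psi_\beta\ge1$. For the numerator, note that for $\xi\le1/10$ we have $\tfrac15-\xi\le \tfrac15$ while $\xi\le\tfrac1{10}$, so I would compare $F_\beta(\tfrac15-\xi)$ with $1/F_\beta(\xi)$ as follows.
\begin{itemize}
\item When $\xi\le 1/10$, $F_\beta(\xi)\ge F_\beta(1/10)>F_0(1/10)=0.9\cdot1.3\cdot100=117$, using Lemma \ref{Fbeta_beta_monotonicity}.
\item Also $F_\beta(\tfrac15-\xi)\ge F_\beta(\tfrac15)>F_0(\tfrac15)=0.8\cdot1.6\cdot25=32$.
\end{itemize}
Hence the numerator exceeds $32-1/117>0$. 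This shows $1+R_E\ge1$. At $\xi=0$ the factor $1/F_\beta(\xi)$ tends to $0$, so the same bound holds there by continuity, consistently with the paper's limiting argument.

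\textbf{Step 2: the prefactor.} It then suffices to prove $F_\beta(\tfrac45-\xi)F_\beta(\tfrac45+\xi)\,(1+R_E)>1$, and I would first try to get $F_\beta(\tfrac45-\xi)F_\beta(\tfrac45+\xi)>1$ outright. By Lemma \ref{Fbeta_beta_monotonicity} this product exceeds $F_0(\tfrac45-\xi)F_0(\tfrac45+\xi)$, where $F_0(t)=(1-t)(3t+1)/t^2$. At $\xi=0$ this gives $(17/16)^2>1$. At $\xi=1/10$, however, it gives $F_0(0.7)F_0(0.9)=\tfrac{0.3\cdot3.1}{0.49}\cdot\tfrac{0.1\cdot3.7}{0.81}\approx1.898\cdot0.457\approx0.867<1$. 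So the bare product fails near $\xi=1/10$, and $R_E$ must be used quantitatively there.

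\textbf{Step 3 (main obstacle): a quantitative lower bound on $R_E$.} I would bound the pieces of $R_E$ uniformly in $\beta$ by their limits, which requires controlling the denominator from above.
\begin{itemize}
\item $\Psi_\beta$ grows with $\beta$, so an upper bound on $\Psi_\beta(\tfrac45+\xi)$ is not uniform. I would rewrite $R_E$ through $m$-quantities so that the $\beta$-dependence appears as ratios that are monotone in $\beta$.
\item Alternatively, I would absorb $\Psi_\beta(\tfrac45+\xi)=m_{1}(\tfrac45+\xi)/m_0(\tfrac15-\xi)$ together with $F_\beta(\tfrac45+\xi)=m_1(\tfrac45+\xi)/m_0(\tfrac45+\xi)$. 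Then $F_\beta(\tfrac45+\xi)/\Psi_\beta(\tfrac45+\xi)=m_0(\tfrac15-\xi)/m_0(\tfrac45+\xi)$, and multiplying by $F_\beta(\tfrac15-\xi)$ gives $m_1(\tfrac15-\xi)/m_0(\tfrac45+\xi)=\Psi_\beta(\tfrac15-\xi)$-type quantities, which are $\ge1$.
\end{itemize}
Concretely, I expect
\[
K_\beta\ \ge\ F_\beta(\tfrac45-\xi)\,\frac{\Psi\text{-type ratio}}{F_\beta(\tfrac25+\xi)+\Psi_\beta(\tfrac35-\xi)/F_\beta(\xi)},
\]
where $\Psi_\beta(\tfrac35-\xi)/F_\beta(\xi)=m_0(\xi)/m_0(\tfrac25+\xi)\cdot(\dots)$ is again a ratio of $m$'s. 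The reduced inequality then compares products of $F_\beta$'s at different points. I would close it with Lemma \ref{lem:reciprocal_gap_monotone}, which gives monotonicity in $\beta$ of differences $1/F_\beta(a)-1/F_\beta(b)$, to push to $\beta\to0^+$, where everything becomes explicit rational functions of $\xi$. Positivity of the resulting polynomial on $[0,1/10]$ would then be checked by interval subdivision or a Descartes/Sturm argument.

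The delicate point I expect is arranging the inequality so that every $\beta$-dependent piece moves in the favourable direction. If that fails, a fallback is to split $\beta$ small/large and use the exponential asymptotics of $F_\beta$ for large $\beta$.
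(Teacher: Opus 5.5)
Your Steps 1--2 are correct. The second bullet of Step 3 finds exactly the identity the paper relies on: with $a=\tfrac15-\xi=1-e$ and $e=\tfrac45+\xi$, one has $F_\beta(e)F_\beta(a)/\Psi_\beta(e)=\Psi_\beta(a)\ge1$, so
\[
K_\beta(\xi)=F_\beta(b)F_\beta(e)+F_\beta(b)\,\frac{\Psi_\beta(a)\bigl(1-\frac{1}{F_\beta(\xi)F_\beta(a)}\bigr)}{F_\beta(c)+\Psi_\beta(d)/F_\beta(\xi)},\qquad b=\tfrac45-\xi,\ c=\tfrac25+\xi,\ d=\tfrac35-\xi .
\]
But the proposal stops exactly where the proof has to happen, so as it stands it is a plan, not a proof. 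You never bound the denominator uniformly in $\beta$. You also never arrange the final inequality into pieces that move monotonically in $\beta$; the ``delicate point'' you flag is the whole content of the lemma. The obstacle you name, an upper bound on $\Psi_\beta(\tfrac45+\xi)$, is already removed by your own identity. The real obstacle is $\Psi_\beta(d)$ in the denominator. Moreover, as written, the one concrete bound you display keeps only the second summand above, and that cannot work. As $\beta\to0^+$ at $\xi=\tfrac1{10}$, the first summand tends to about $0.867$ (your Step 2) and the second to about $1.898\cdot 0.286\approx 0.54$. Neither alone exceeds $1$, so both must be retained.

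The missing step is the following. Since $c+d=1$, the reflection identity $\Psi_\beta(c)\Psi_\beta(d)=F_\beta(c)F_\beta(d)$ together with $\Psi_\beta(c)\ge1$ gives $\Psi_\beta(d)\le F_\beta(c)F_\beta(d)$. Since $\xi<d$, also $F_\beta(d)<F_\beta(\xi)$. Hence the denominator is $<2F_\beta(c)$, with no upper bound on $\Psi_\beta$ needed. Combined with $\Psi_\beta(a)\ge1$ and $F_\beta(\xi)F_\beta(a)>117\cdot 32$, this yields $K_\beta(\xi)>F_\beta(b)\bigl[F_\beta(e)+\tfrac{2}{5F_\beta(c)}\bigr]$. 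It therefore suffices to show
\[
J_\beta(\xi)=F_\beta(e)+\frac25\Bigl(\frac1{F_\beta(c)}-\frac1{F_\beta(b)}\Bigr)-\frac{3}{5F_\beta(b)}>0 .
\]
Each of the three terms is increasing in $\beta$: Lemma \ref{Fbeta_beta_monotonicity} handles the outer ones, and Lemma \ref{lem:reciprocal_gap_monotone} with $c<b$ handles the middle one. So $J_\beta>J_0$, and $J_0>0$ on $(0,\tfrac1{10}]$ is an explicit one-variable rational inequality. No monotonicity of $\Psi_\beta$ in $\beta$ and no small/large-$\beta$ split is needed. The margin is thin ($J_0(\tfrac1{10})\approx 0.01$), which is why the reduction must combine both summands in this way rather than discard either. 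Your treatment of $\xi=0$ is fine: the correction is positive and $F_\beta(\tfrac45)^2>(\tfrac{17}{16})^2>1$.
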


\begin{pf}
    First, consider $0 < \xi \le 1/10$. Let
    $$a = \frac15-\xi, \quad b = \frac45-\xi, \quad c = \frac25+\xi, \quad d = \frac35-\xi, \qtq{and} e = \frac45+\xi.$$
    Therefore,
    $0 < \xi \le a < d < b < e < 1.$
    We have
    $$\Psi_\beta(x)\Psi_\beta(1-x) = F_\beta(x)F_\beta(1-x).$$
    Let 
    $$C_\beta(\xi) = F_\beta(e) \frac{F_\beta(a)-1/F_\beta(\xi)}{\Psi_\beta(e) \left[ F_\beta(c)+\Psi_\beta(d)/F_\beta(\xi) \right]}.$$
    Since $a = 1-e$, we have $\frac{F_\beta(e)}{\Psi_\beta(e)} = \frac{\Psi_\beta(a)}{F_\beta(a)}$. Substituting this yields
    $$C_\beta(\xi) = \frac{\Psi_\beta(a) \left( 1-\frac{1}{F_\beta(\xi)F_\beta(a)} \right)}{F_\beta(c)+\frac{\Psi_\beta(d)}{F_\beta(\xi)}}.$$
Since $d = 1-c$ and $\Psi_\beta(c) \ge 1$, identity $\Psi_\beta(d)\Psi_\beta(c) = F_\beta(d)F_\beta(c)$ gives $\Psi_\beta(d) \le F_\beta(d)F_\beta(c)$. For $\xi \le 1/10$ and $d = 3/5-\xi \ge 1/2$, it follows that $\xi < d$. Since $F_\beta$ is strictly decreasing, $F_\beta(d) < F_\beta(\xi)$, which guarantees $\Psi_\beta(d) < F_\beta(\xi)F_\beta(c)$. Consequently
\begin{align}\label{bound_denom}
    F_\beta(c)+ \frac{\Psi_\beta(d)}{F_\beta(\xi)} < 2F_\beta(c).
\end{align}
For $0 < \xi \le 1/10$, monotonicity ensures $F_\beta(\xi) > F_0(\xi) \ge F_0(1/10) = 117$. Additionally, $1/10 \le a < 1/5$, ensuring $F_\beta(a) > F_0(a) \ge F_0(1/5) = 32$. Hence, $F_\beta(\xi)F_\beta(a) > 117 \cdot 32 > 5$. Therefore
\begin{align}\label{bound_neum}
    \frac12 \left( 1-\frac{1}{F_\beta(\xi)F_\beta(a)} \right) > \frac12\left(1-\frac15\right) = \frac25. 
\end{align}

Since $\Psi_\beta(a) \ge 1$, combining equations \eqref{bound_denom} and \eqref{bound_neum} establishes that $$C_\beta(\xi) > \frac{2}{5F_\beta(c)}.$$
Substituting this into the original $K_\beta(\xi)$ gives
\begin{align}\label{bound_k}
    K_\beta(\xi) > F_\beta(b) \left[ F_\beta(e) + \frac{2}{5F_\beta(c)} \right]. 
\end{align}
It remains to show that the right-hand side is strictly greater than $1$. Define 
$$J_\beta(\xi) = F_\beta(e) + \frac{2}{5F_\beta(c)} - \frac{1}{F_\beta(b)}.$$
Then \eqref{bound_k} simplifies to
\begin{align}\label{final_bound_k}
    K_\beta(\xi) > 1 + F_\beta(b)J_\beta(\xi).
\end{align}
Since $F_\beta(b) > 0$, it is sufficient to prove $J_\beta(\xi) > 0$. We can rewrite
$$J_\beta(\xi) = F_\beta(e) + \frac{2}{5}\left(\frac{1}{F_\beta(c)} - \frac{1}{F_\beta(b)}\right)-\frac{3}{5}\frac{1}{F_\beta(b)}.$$ 
Since $c<b$, from Lemmas \ref{lem:reciprocal_gap_monotone} and \ref{Fbeta_beta_monotonicity}, we obtain
$$J_\beta(\xi) > J_0(\xi).$$
Using $F_0(x) = \frac{(1-x)(3x+1)}{x^2}$ yields
\begin{align}\label{J0_eqn}
    J_0(\xi) = \frac{-P(\xi)}{5(3-5\xi)(5\xi+1)(5\xi+4)^2(17-15\xi)(15\xi+11)},
\end{align}
where 
$$P(\xi) = 1968750\xi^6 + 387500\xi^5 - 3355000\xi^4 - 493500\xi^3 + 1007825\xi^2 - 23630\xi - 7621.$$
The denominator in \eqref{J0_eqn} is positive for $0 \le \xi \le 1/10$. Thus, we must show $P(\xi) < 0$. 

For $\xi \in [0, 1/10]$, $\xi^6 \le \xi^3/1000$ and $\xi^5 \le \xi^3/100$. We have a strictly bound $P(\xi) \le R(\xi)$, where
$$R(\xi) = 1007825\xi^2 - \frac{1950625}{4}\xi^3 - 23630\xi - 7621.$$
Since $R''(\xi) = 2015650 - \frac{5851875}{2}\xi\ge \frac{6892225}{4} > 0$ for $\xi \le 1/10$, $R(\xi)$ is strictly convex on $[0, 1/10]$, and its maximum is achieved at the boundary points. Since
$$R(0) = -7621 < 0, \quad R\left(\frac{1}{10}\right) = -\frac{12589}{32} < 0,$$
we have $R(\xi) < 0$, which implies that  $P(\xi) < 0$. Therefore, it confirms $J_0(\xi) > 0$ in \eqref{J0_eqn} and hence $J_\beta(\xi) > J_0(\xi) > 0$ implies that 
$$K_\beta(\xi) > 1+F_\beta(b)J_\beta(\xi) > 1, \quad 0 < \xi \le \frac{1}{10}.$$

Finally, at $\xi = 0$. Since $F_\beta(\xi) \to +\infty$ as $\xi \to 0^+$, we have $1/F_\beta(\xi) \to 0$. Therefore,
$$K_\beta(0) = F_\beta\left(\frac45\right)^2 \left[ 1+ \frac{F_\beta(\frac15)}{\Psi_\beta(\frac45)F_\beta(\frac25)} \right].$$
Since the fractional term in the bracket is strictly positive, the bracket term is strictly greater than $1$. Furthermore, $F_\beta(4/5) > 1$. Hence, $K_\beta(0) > 1$. Hence,
$$K_\beta(\xi) > 1 \quad \text{for every } \beta > 0, \quad 0 \le \xi \le \frac{1}{10}.$$
\end{pf}

\begin{lem}\label{E_beta}
Let $\beta>0$, and define
\begin{align}
E_\beta(t)
=&
F_\beta(1-t)
+
\frac{\Psi_\beta(t)}
{F_\beta\left(\frac12-\frac{3t}{2}\right)}
-
\frac{1}
{F_\beta\left(\frac12+\frac{t}{2}\right)},
\quad 0< t\le \frac15.\nonumber
\end{align}
Then
\begin{align}
E_\beta(t)>0, \quad 0< t\le \frac15.  \nonumber
\end{align}
\end{lem}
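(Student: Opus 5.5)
We will prove the positivity of $E_\beta$ by reducing it to the limiting case $\beta\to0^+$, in the same spirit as the proof of Lemma \ref{K_beta_greater_than_1}. Set
$$a=\frac12-\frac{3t}{2},\qquad b=\frac12+\frac t2,\qquad 0<t\le\frac15 .$$
Then $\frac15\le a<\frac12<b\le\frac35$, so $0<a<b<1$.

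The first step is to split the middle term using $\Psi_\beta(t)=1+(\Psi_\beta(t)-1)$. This gives
$$E_\beta(t)=F_\beta(1-t)+\frac{\Psi_\beta(t)-1}{F_\beta(a)}+\left(\frac1{F_\beta(a)}-\frac1{F_\beta(b)}\right).$$
By Lemma \ref{Psi_function}, $\Psi_\beta(t)\ge1$. By Lemma \ref{Fx function}, $F_\beta(a)>0$. Hence the middle summand is nonnegative and can be discarded. For the first summand, Lemma \ref{Fbeta_beta_monotonicity} gives $F_\beta(1-t)>F_0(1-t)$. For the last summand we use Lemma \ref{lem:reciprocal_gap_monotone} with $a<b$: the gap $\frac1{F_\beta(a)}-\frac1{F_\beta(b)}$ is strictly increasing in $\beta$, so it is bounded below by its limit as $\beta\to0^+$, namely $\frac1{F_0(a)}-\frac1{F_0(b)}$.

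The point of this splitting is that the gap term is negative, since $F_0$ is decreasing. It cannot be bounded crudely, because $E_\beta(t)\to0$ as $t\to0$. Keeping the gap intact and passing to $\beta=0$ is therefore essential.

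Combining the three bounds, we obtain
$$E_\beta(t)>J(t):=F_0(1-t)+\frac1{F_0(a)}-\frac1{F_0(b)},\qquad F_0(x)=\frac{(1-x)(3x+1)}{x^2}.$$
It remains to show $J(t)>0$ on $(0,1/5]$. Explicitly,
$$F_0(1-t)=\frac{t(4-3t)}{(1-t)^2},\qquad F_0(b)=\frac{(1-t)(5+3t)}{(1+t)^2},\qquad F_0(a)=\frac{(1+3t)(5-9t)}{(1-3t)^2}.$$
As a heuristic check near $t=0$: $F_0(1-t)\approx4t$. Also $F_0(1/2)=5$ and $F_0'(1/2)=-24$, so $(1/F_0)'(1/2)=24/25$, and the gap is $\approx-\frac{48}{25}t$. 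Thus $J(t)\approx\frac{52}{25}t>0$, which shows there is room to spare.

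The main step, and the expected obstacle, is a rigorous positivity check of $J$. We put $J$ over the common denominator $(1-t)^2(1+3t)(5-9t)(1-t)(5+3t)$, which is positive on $(0,1/5]$. We then show that the numerator polynomial $N(t)$ is positive there. Since $J(0)=0$, we expect $N(t)=t\,\widetilde N(t)$ for a low-degree polynomial $\widetilde N$. We then aim to prove $\widetilde N>0$ on $[0,1/5]$. First we check $\widetilde N(0)>0$ (which reflects the constant $52/25$). Then we absorb the higher powers using $t^k\le 5^{-(k-j)}t^j$, exactly as the bound $P\le R$ was obtained in the proof of Lemma \ref{K_beta_greater_than_1}. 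If necessary we finish with a convexity or endpoint argument, or alternatively a Sturm or Descartes count on $[0,1/5]$.
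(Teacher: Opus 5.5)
Your proposal is correct and is essentially the paper's proof. You use $\Psi_\beta(t)\ge 1$ to drop the extra term, bound $F_\beta(1-t)>F_0(1-t)$ via Lemma~\ref{Fbeta_beta_monotonicity} and the gap $\frac1{F_\beta(a)}-\frac1{F_\beta(b)}$ via Lemma~\ref{lem:reciprocal_gap_monotone}, then check positivity of the $\beta=0$ rational function. The paper carries out your final step with the least common denominator $(1-t)^2(1+3t)(5+3t)(5-9t)$ (your extra factor $(1-t)$ is unnecessary but harmless), giving numerator $t(243t^4-21t^3-619t^2+217t+52)$; absorbing with $t^2\le t/5$ and $t^3\le t/25$ then bounds the quartic below by $52+\frac{2309}{25}t>0$.
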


\begin{pf}
    Since $\Psi_\beta(t)\ge 1$, we have
    \begin{align}\label{Ebeta_lower1}
        E_\beta(t)\ge F_\beta(1-t)
+
\frac{1}
{F_\beta\left(\frac12-\frac{3t}{2}\right)}
-
\frac{1}
{F_\beta\left(\frac12+\frac{t}{2}\right)}.
    \end{align}
Let $x_t=\frac12-\frac{3t}{2}$, and $y_t=\frac12+\frac{t}{2}.$
Since $0<t\le 1/5$, we have
$$
0<x_t<y_t<1.
$$
By the monotonicity of $F_\beta(x)$ with respect to $\beta$,
$$
F_\beta(1-t)>F_0(1-t).
$$
Since $x_t<y_t$, Lemma \ref{lem:reciprocal_gap_monotone}
implies that
$$\frac1{F_\beta(x_t)}-\frac1{F_\beta(y_t)}
>\frac1{F_0(x_t)}-\frac1{F_0(y_t)}.
$$
Combining this with \eqref{Ebeta_lower1}, we obtain
\begin{align}
E_\beta(t)
>
F_0(1-t)
+
\frac1{F_0(x_t)}
-
\frac1{F_0(y_t)}.
\label{eq:Ebeta_lower2}
\end{align}

From Lemma \ref{Fx function}, recall that
$F_0(x)=\frac{(1-x)(3x+1)}{x^2}.$
Consequently,
$$
F_0(1-t)
=
\frac{t(4-3t)}{(1-t)^2},\quad
\frac1{F_0(x_t)}
=
\frac{(1-3t)^2}
{(1+3t)(5-9t)},
$$
and
$$
\frac1{F_0(y_t)}
=
\frac{(1+t)^2}
{(1-t)(5+3t)}.
$$
Therefore,
\begin{align}
F_0(1-t)
+
\frac1{F_0(x_t)}
-
\frac1{F_0(y_t)}&=
\frac{
t\left(
243t^4-21t^3-619t^2+217t+52
\right)
}{
(1-t)^2(1+3t)(5+3t)(5-9t)
}.
\label{eq:rational_lower_bound}
\end{align}

The denominator in \eqref{eq:rational_lower_bound} is strictly
positive for $0<t\le 1/5$.  Furthermore,
$$
t^2\le \frac{t}{5},
\quad
t^3\le\frac{t}{25},
$$
and hence
\begin{align*}
243t^4-21t^3-619t^2+217t+52
&\ge 52+\frac{2309}{25}t>0.
\end{align*}
Thus the right-hand side of \eqref{eq:rational_lower_bound}
is strictly positive. From \eqref{eq:Ebeta_lower2}, it follows that
$$
E_\beta(t)>0,
\quad
0<t\le\frac15.
$$
This completes the proof.
\end{pf}

\begin{lem}\label{T1_and_T3}
Let
$
\frac23<\frac pq<\frac57$, $0\le \xi\le \frac1{2p},$
and 
$$
x_1=\xi_{q-p-1},\quad
x_2=\xi_{2q-2p-1},\quad
x_3=\xi_{3q-4p-1}.
$$
Then, for every $\beta>0$,
$$
F_\beta(1-x_3)\left[F_\beta(x_2)+
\frac{\Psi_\beta(1-x_2)}{F_\beta(x_1)}\right]
>1.
$$
\end{lem}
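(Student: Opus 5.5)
The plan is to reduce the inequality to explicit rational functions by the same two tools used in Lemmas \ref{K_beta_greater_than_1} and \ref{E_beta}: the lower bound $\Psi_\beta\ge 1$ from Lemma \ref{Psi_function}, and passage to the limit $\beta\to 0^+$ through $F_0(x)=\frac{(1-x)(3x+1)}{x^2}$.

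First I will pin down the geometry of the three points. Write $u=q/p\in(7/5,3/2)$. From $\xi_r=\xi+r/p$ we get
$$x_1=u-1-\tfrac1p+\xi,\qquad x_2=2u-2-\tfrac1p+\xi,\qquad 1-x_3=5-3u+\tfrac1p-\xi .$$
These satisfy two exact relations:
$$3x_2+2(1-x_3)=4-\tfrac1p+\xi,\qquad x_2=2x_1+\tfrac1p-\xi .$$
Since $0\le\xi\le\frac1{2p}$, they give $1-x_3<2-\frac32x_2$ and $x_1<\frac{x_2}{2}$. The ranges are roughly $x_1\in(0.3,0.5)$, $x_2\in(0.7,1)$ and $1-x_3\in(0.5,0.8)$. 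Because $F_\beta$ is strictly decreasing (Lemma \ref{Fx function}), every admissible configuration is therefore dominated by a one-parameter family in $y=x_2$.

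Next I will split the left-hand side as $T_1+T_3$, where
$$T_1=F_\beta(1-x_3)F_\beta(x_2),\qquad T_3=\frac{F_\beta(1-x_3)\Psi_\beta(1-x_2)}{F_\beta(x_1)} .$$

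\textbf{Bounding $T_1$.} By Lemma \ref{Fbeta_beta_monotonicity} and monotonicity in $x$,
$$T_1> F_0(1-x_3)F_0(x_2)\ge F_0\bigl(2-\tfrac32 y\bigr)F_0(y)=:G(y),$$
which is an explicit rational function of $y$. I expect $G(y)>1$ on an initial range $y\le y_*$, to be checked by a polynomial sign argument in the style of the proof for $(1/2,3/5)$. On that range the lemma already follows because $T_3>0$.

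\textbf{Bounding $T_3$ near $y=1$.} Here $T_1\to 0$, so $T_3$ must carry the estimate. This is the step I expect to be the main obstacle: $1/F_\beta(x_1)$ decreases in $\beta$, so one cannot simply pass to $\beta=0$.

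The first thing I would try is the identity $\Psi_\beta(x)\Psi_\beta(1-x)=F_\beta(x)F_\beta(1-x)$ together with $\Psi_\beta\ge1$. This lets me trade $\Psi_\beta(1-x_2)$ for $F_\beta(x_2)F_\beta(1-x_2)/\Psi_\beta(x_2)$, or else keep it as a factor that is at least $1$. I would then aim to show
$$T_1+T_3\ \ge\ F_\beta(1-x_3)\Bigl[F_\beta(x_2)+\frac1{F_\beta(x_1)}\Bigr].$$
To get rid of the $\beta$-dependence in the bracket I would write it as
$$F_\beta(x_2)+\frac1{F_\beta(x_1)}=F_\beta(x_2)+\Bigl(\frac1{F_\beta(x_1)}-\frac1{F_\beta(1-x_3)}\Bigr)+\frac1{F_\beta(1-x_3)} .$$
The middle difference increases in $\beta$ by Lemma \ref{lem:reciprocal_gap_monotone}, since $x_1<1-x_3$. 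The remaining term $F_\beta(1-x_3)\cdot\frac1{F_\beta(1-x_3)}$ is exactly $1$. So it suffices to show
$$F_\beta(1-x_3)\Bigl[F_\beta(x_2)+\frac1{F_\beta(x_1)}-\frac1{F_\beta(1-x_3)}\Bigr]>0 .$$
By Lemma \ref{Fbeta_beta_monotonicity} and Lemma \ref{lem:reciprocal_gap_monotone}, the bracket is larger than its value at $\beta=0$. It then remains to prove the purely rational inequality
$$F_0(x_2)+\frac1{F_0(x_1)}-\frac1{F_0(1-x_3)}>0 .$$

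\textbf{The rational inequality.} Using the relations above, I will parametrize $x_1$ and $1-x_3$ by $y$ and the small quantity $\frac1p-\xi\in[\frac1{2p},\frac1p]$, with $p\ge 7$ forced by $2/3<p/q<5/7$. I will then clear the positive denominators and verify positivity of the resulting polynomial on the compact parameter box. The crude bounds I plan to use are $t^k\le c\,t$ and endpoint/convexity checks, exactly as in Lemmas \ref{K_beta_greater_than_1} and \ref{E_beta}. If the margin turns out too thin near $x_1\approx 1-x_3$, I would instead retain the factor $\Psi_\beta(1-x_2)$, which is strictly larger than $1$ by Lemma \ref{Psi_function} whenever $x_2<1$, and combine it with the $G(y)$ estimate on the overlapping range.
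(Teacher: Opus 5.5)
Your reduction is the paper's. Using $\Psi_\beta(1-x_2)\ge 1$, the left side is at least $1+F_\beta(1-x_3)\,G_\beta$ with $G_\beta=F_\beta(x_2)+\frac{1}{F_\beta(x_1)}-\frac{1}{F_\beta(1-x_3)}$. Lemma~\ref{Fbeta_beta_monotonicity} and Lemma~\ref{lem:reciprocal_gap_monotone} (which applies since $x_1<1-x_3$) then give $G_\beta>G_0$. This covers every admissible configuration at once, so your $T_1$ case split is unnecessary. Its premise is also false: at the admissible value $x_2=\frac57$, from $(p,q,\xi)=(7,10,0)$, one has $F_0(\frac{13}{14})F_0(\frac57)=\frac{53}{169}\cdot\frac{44}{25}<1$, so there is no initial range where $G>1$.

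The real gap is the last step. Put $d=1-x_2$ and $\epsilon=\frac1p-\xi$, so that $x_1=\frac{1-d-\epsilon}{2}$ and $1-x_3=\frac{1+3d-\epsilon}{2}$. Then $G_0>0$ is false when $x_2$ and $\epsilon$ range independently over their admissible values, so a verification on a box in $(x_2,\epsilon)$ cannot succeed. For instance, $d=\frac14$ and $\epsilon\to0^+$ give $x_1\to\frac38$, $1-x_3\to\frac78$, and
\[
G_0\to\frac{13}{9}+\frac{9}{85}-\frac{49}{29}=-\frac{3091}{22185}<0.
\]
Yet $x_2=\frac34$ does occur, for $(p,q,\xi)=(12,17,0)$, and arbitrarily small $\epsilon$ also occurs, just not together with it.

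The missing ingredient is the arithmetic coupling between $d$ and $\epsilon$. We have $d-\epsilon=\frac{3p-2q}{p}$ and $5(3p-2q)=p-2(5q-7p)\le p-2$. Together with $\epsilon\le\frac1p$ this forces $5d\le 1+3\epsilon$, which excludes the point above. The paper builds this in with $k=3p-2q\ge1$, $l=5q-7p\ge1$, $p=5k+2l$, $\tau=\frac1p\le\frac1{5k+2}$ and $c=1-p\xi\in[\frac12,1]$. Then $d=(k+c)\tau$, $x_1=\frac{1-(k+2c)\tau}{2}$, $1-x_3=\frac{1+(3k+2c)\tau}{2}$, and $G_0=-\tau(k+c)P_{k,c}(\tau)/T_{k,c}(\tau)$. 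Finally $P_{k,c}<0$ is checked separately for $k\ge2$ and $k=1$.

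Note also where the difficulty actually sits. It is not near $x_1\approx 1-x_3$: there $d$ is small and $G_0\approx\frac{52}{25}d>0$. It is at large $d$, where positivity depends on this coupling.
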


\begin{pf}
Let $d=1-x_2$ and $y=1-x_3$. From the definitions of $x_1,x_2,x_3$, we have $y=x_1+2d$. In particular, $0<x_1<y<1$. Using $\Psi_\beta(d)\ge1$, we obtain
\begin{align}
F_\beta(y)F_\beta(x_2)
+
\Psi_\beta(d)\frac{F_\beta(y)}{F_\beta(x_1)}&\ge
F_\beta(y)
\left(
F_\beta(x_2)+\frac1{F_\beta(x_1)}
\right)
\nonumber\\
&=
1+F_\beta(y)
\left[
F_\beta(x_2)
+\frac1{F_\beta(x_1)}
-\frac1{F_\beta(y)}
\right].
\label{eq:T1T3_reduction}
\end{align}
Thus it is enough to prove
\begin{equation}\label{eq:Gbeta_positive}
G_\beta=
F_\beta(x_2)
+\frac1{F_\beta(x_1)}
-\frac1{F_\beta(y)}
>0.
\end{equation}

Since $F_\beta(x)>F_0(x)$ for every fixed $x\in(0,1)$ and
$\beta>0$, we have
$$
F_\beta(x_2)>F_0(x_2).
$$
Moreover, since $x_1<y$, Lemma~\ref{lem:reciprocal_gap_monotone}
gives
$$
\frac1{F_\beta(x_1)}
-\frac1{F_\beta(y)}
>
\frac1{F_0(x_1)}
-\frac1{F_0(y)}.
$$
Consequently,
\begin{equation}\label{eq:Gbeta_G0}
G_\beta>G_0,
\quad
G_0=
F_0(x_2)
+\frac1{F_0(x_1)}
-\frac1{F_0(y)}.
\end{equation}

It remains to show that $G_0>0$. Put
$k=3p-2q$, $l=5q-7p.$ Solving these two relations gives $p=5k+2l$.
Set $\tau=\frac1p$ and $c=1-p\xi$.
Since $0\le\xi\le1/(2p)$ and $l\ge1$,
$$\frac12\le c\le1,\quad 0<\tau\le\frac1{5k+2}.$$

A direct calculation gives
$$
d=(k+c)\tau,
\quad
x_2=1-(k+c)\tau,
$$
and
$$
x_1=\frac{1-(k+2c)\tau}{2},
\quad
y=\frac{1+(3k+2c)\tau}{2}.
$$

Using $F_0(x)=\frac{(1-x)(3x+1)}{x^2}$,
we obtain
\begin{equation}\label{eq:G0_factor}
G_0
=
-\frac{\tau(k+c)P_{k,c}(\tau)}
{T_{k,c}(\tau)},
\end{equation}
where
\begin{align*}
T_{k,c}(\tau)
=&
[1-(k+c)\tau]^2
[1+(k+2c)\tau]
[1-(3k+2c)\tau]
[5-(3k+6c)\tau]
[5+(9k+6c)\tau].
\end{align*}
All factors in $T_{k,c}(\tau)$ are strictly positive for
$k\ge1$, $1/2\le c\le1$, and
$\tau\le(5k+2)^{-1}$. Hence
$T_{k,c}(\tau)>0.$

Furthermore,
\begin{align}
P_{k,c}(\tau)
=&
A_5\tau^5-A_4\tau^4-A_3\tau^3+A_2\tau^2
+(123k-21c)\tau-52,
\label{eq:Pkc}
\end{align}
where
\begin{align*}
A_5=&
432c^5+2160c^4k+4104c^3k^2
+3672c^2k^3+1539ck^4+243k^5,
\\
A_4=&
640c^4+2704c^3k+3968c^2k^2
+2348ck^3+480k^4,\\
A_3=&
280c^3+584c^2k+438ck^2+182k^3,
\\
A_2=&
528c^2+868ck+460k^2.
\end{align*}

We claim that $P_{k,c}(\tau) < 0$. We evaluate this in two cases based on $k$.
\\
\noindent
\textbf{Case 1}: $k\ge2$. Since $1/2\le c\le1$, discarding the
negative terms $-A_4\tau^4-A_3\tau^3$ and using
$\tau\le(5k+2)^{-1}$ gives
\begin{align*}
P_{k,c}(\tau)
&<
\frac{A_5|_{c=1}}{(5k+2)^5}
+
\frac{A_2|_{c=1}}{(5k+2)^2}
+
\frac{123k}{5k+2}
-52
\\
&=
-\frac{
27882k^5+22961k^4-41272k^3
-54744k^2-21952k-2992
}{(5k+2)^5}.
\end{align*}
Putting $k=s+2$, $s\ge0$, the numerator becomes
$$
27882s^5+301781s^4+1257696s^3
+2479248s^2+2229120s+663552,
$$
which is strictly positive. Hence
$$
P_{k,c}(\tau)<0,
\quad k\ge2.
$$
\noindent
\textbf{Case 2}: $k=1$. In this case $\tau\le1/7$.
Since $1/2\le c\le1$, we have
$$
A_5\le12150,\quad
A_4\ge3024,\quad
A_3\ge582,\quad
A_2\le1856,\qtq{and}
123-21c\le\frac{225}{2}.
$$
Therefore
$$
P_{1,c}(\tau)\le H(\tau),
$$
where
$$
H(\tau)
=
12150\tau^5-3024\tau^4-582\tau^3
+1856\tau^2+\frac{225}{2}\tau-52.
$$
Moreover,
$$
H'(\tau)
=
60750\tau^4-12096\tau^3-1746\tau^2
+3712\tau+\frac{225}{2}.
$$
For $0<\tau\le1/7$, we have $
\tau^3\le\frac{\tau}{49}$, $\tau^2\le\frac{\tau}{7}$,
and hence
$$
H'(\tau)
\ge
\frac{22510}{7}\tau+\frac{225}{2}>0.
$$
Thus $H$ is increasing on $(0,1/7]$. Since
$H\left(\frac17\right)
=-\frac{9559}{33614}<0,$
we obtain $P_{1,c}(\tau)\le H(\tau)<0.$ Therefore,
$$
P_{k,c}(\tau)<0
$$
for every admissible $k,c,\tau$. Since
$T_{k,c}(\tau)>0$, \eqref{eq:G0_factor} gives
$$
G_0>0.
$$
Together with \eqref{eq:Gbeta_G0}, yields
$$
G_\beta>G_0>0.
$$

Finally, from \eqref{eq:T1T3_reduction} we obtain
$$
\begin{aligned}
F_\beta(1-x_3)\left[F_\beta(x_2)
+
\frac{\Psi_\beta(1-x_2)}{F_\beta(x_1)}\right]
\ge
1+F_\beta(1-x_3)G_\beta
>1.
\end{aligned}
$$
This proves the lemma.
\end{pf}

\begin{lem}\label{Last_lemma_for_p/q}
    Let $\frac35 < \frac pq < \frac23$ and $0 \le \xi \le \frac{1}{2p}$. Define $$A = q-p-1, ~ B = 2q-3p-1, ~ C = 3q-4p-1.$$
    If $(\xi, 2q-3p) \neq (0, 1)$, then
    $$F_\beta(1-\xi_B) + \frac{\Psi_\beta(\xi_B)}{F_\beta(1-\xi_C)} > \frac{1}{F_\beta(\xi_A)}.$$
\end{lem}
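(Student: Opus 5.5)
We would follow the template already used for Lemma \ref{T1_and_T3} and Lemma \ref{E_beta}: discard $\Psi_\beta$ via $\Psi_\beta\ge1$, transfer the resulting inequality to the limit $\beta\to0^+$ using Lemmas \ref{Fbeta_beta_monotonicity} and \ref{lem:reciprocal_gap_monotone}, and then verify an explicit rational inequality for $F_0(x)=\frac{(1-x)(3x+1)}{x^2}$.

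\textbf{Step 1 (positions).} Set $x=\xi_A$, $u=1-\xi_B$ and $w=1-\xi_C$. Put $k=5q-8p$... Rather than guessing that combination, we parametrize $3/5<p/q<2/3$ by
\[
k=2q-3p\ge1,\qquad l=5p-3q\ge1,\qquad p=3k+2l,
\]
and write $\tau=1/p$, $c=1-p\xi\in[\tfrac12,1]$. A direct computation then gives
\[
u=(k+c)\tau,\qquad x=\frac{1-(k+2c)\tau}{2},\qquad w=\frac{1+(\ast)\tau}{2}.
\]
The exact affine forms will be recomputed. In particular we expect $x<w$, so that $1/F_\beta(x)-1/F_\beta(w)$ is a reciprocal gap of the type controlled by Lemma \ref{lem:reciprocal_gap_monotone}.

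\textbf{Step 2 (reduction).} Since $\Psi_\beta(\xi_B)\ge1$ by Lemma \ref{Psi_function}, it suffices to show
\[
G_\beta:=F_\beta(u)+\frac1{F_\beta(w)}-\frac1{F_\beta(x)}>0.
\]
The sign here is reversed relative to Lemma \ref{T1_and_T3}: the gap term appears as $1/F(w)-1/F(x)$ with $x<w$, and this quantity is negative. Lemma \ref{lem:reciprocal_gap_monotone} says the gap $1/F_\beta(x)-1/F_\beta(w)$ increases in $\beta$, so the negative part $-(1/F_\beta(x)-1/F_\beta(w))$ becomes \emph{more} negative as $\beta$ grows. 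This is the main obstacle, since a naive reduction to $\beta=0$ fails. We would handle it as follows.

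(a) If actually $w<x$ in this regime, the gap has the favorable sign and Lemma \ref{lem:reciprocal_gap_monotone} applies directly, giving $G_\beta>G_0$.

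(b) Otherwise, bound crudely by $1/F_\beta(x)\le 1/F_0(x)$ using Lemma \ref{Fbeta_beta_monotonicity}, drop $1/F_\beta(w)>0$, and prove $F_0(u)>1/F_0(x)$. Because $u$ is small (at most $\approx 1/3$) while $x$ is near $1/2-$, $F_0(u)$ is large ($F_0(1/3)=6$) and $1/F_0(x)$ is moderate ($1/F_0(1/2)=1/2$). So this crude bound should succeed except when $u$ is close to $1$, which does not happen here.

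\textbf{Step 3 (polynomial verification).} We verify $F_0(u)F_0(x)>1$ as a rational inequality in $(k,c,\tau)$ with $\tau\le 1/(3k+2)$, by clearing denominators as in the proof of Lemma \ref{T1_and_T3}. The cases are $k\ge2$ (bound the monomials uniformly) and $k=1$ (a one-variable polynomial in $\tau$, checked by monotonicity and an endpoint value). The excluded case $(\xi,2q-3p)=(0,1)$ corresponds to $k=1$, $c=1$. We will check whether it is where the estimate degenerates; for example $u$ or $x$ reaching a boundary such as $\xi_B=0$ makes $m_{0,\beta}(\xi_B)$ vanish, and this is presumably why it is excluded.
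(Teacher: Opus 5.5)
There is a genuine gap. The positions in Step 1 are miscomputed, and as a result the inequality your Step 3 commits to is false. With $k=2q-3p$, $l=5p-3q$ (so $p=3k+2l$, $q=5k+3l$), $\tau=1/p$ and $c=1-p\xi$, one gets $\xi_B=(k-c)\tau$ and
\[
u=1-\xi_B=1-(k-c)\tau,\qquad x=\xi_A=\frac{1+(k-2c)\tau}{2},\qquad w=1-\xi_C=\frac{1-(3k-2c)\tau}{2}.
\]
So $u$ lies near $1$, not near $0$, and $F_0(u)=\frac{\xi_B(4-3\xi_B)}{(1-\xi_B)^2}\approx 4\xi_B$ is small. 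Branch (b) therefore fails.
\begin{itemize}
\item For $2q-3p=1$ and $\xi\to0^+$ you have $F_\beta(u)=F_\beta(1-\xi)\to F_\beta(1)=0$, while $1/F_\beta(x)\to 1/F_\beta\bigl(\frac{1-\tau}{2}\bigr)>0$.
\item The same failure occurs for $k\ge2$ with $l$ large, where $\xi_B\le k\tau$ is small and $F_0(u)F_0(x)\approx 20\,\xi_B<1$.
\end{itemize}
In fact, at the excluded point $(\xi,2q-3p)=(0,1)$ the lemma degenerates to an equality, since $F_\beta(1)=0$, $\Psi_\beta(0)=1$ and $1-\xi_C=\xi_A$. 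So no argument that discards $1/F_\beta(w)$ can work. The positive term $F_\beta(1-\xi_B)$ must be played off against the negative gap $1/F_\beta(w)-1/F_\beta(x)$, and both are of size $O(\xi_B)$.

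Your sign discussion in Step 2 is also inverted. Since $1/F_\beta$ is increasing, $x<w$ would make $1/F_\beta(w)-1/F_\beta(x)$ positive and the lemma trivial. What actually holds is the identity $\xi_A+\xi_C=1+2\xi_B$, i.e. $w=x-2\xi_B<x$. The gap is therefore negative, but it has the form $1/F_\beta(\text{smaller})-1/F_\beta(\text{larger})$. Hence Lemma \ref{lem:reciprocal_gap_monotone} together with Lemma \ref{Fbeta_beta_monotonicity} gives $G_\beta>G_0$. This is your branch (a), and it is exactly the paper's reduction.

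What is missing is the $\beta=0$ inequality $G_0>0$, which needs the cancellation. With $b=\xi_B$ and $U_0=1/F_0$, the paper argues as follows.
\begin{itemize}
\item $U_0'(t)=\frac{2t(t+1)}{(1-t)^2(3t+1)^2}$ is increasing.
\item $x<\frac{1+b}{2}$, because $2\xi_A-\xi_B=1-\frac1p+\xi<1$.
\item Together these give $U_0(x)-U_0(x-2b)<2b\,U_0'\bigl(\tfrac{1+b}{2}\bigr)$.
\item Finally, $F_0(1-b)>2b\,U_0'\bigl(\tfrac{1+b}{2}\bigr)$ for $0<b<\tfrac13$, which reduces to $27b^3+70b^2+19b-52<0$.
\end{itemize}
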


\begin{pf}
    For simplicity, let $a = \xi_A$, $b = \xi_B$, and $c = \xi_C$. Then $A+C = p+2B$ and 
    $$a+c = 2\xi + \frac{A+C}{p} = 1 + 2\left(\xi + \frac{B}{p}\right) = 1+2b.$$
    Define
    $$E_\beta = F_\beta(1-b) + \frac{\Psi_\beta(b)}{F_\beta(a-2b)} - \frac{1}{F_\beta(a)}.$$
    Since  $\Psi_\beta(b) \ge 1$ for $b \ge 0$ from Lemma \ref{Psi_function}, we have 
    $$E_\beta \ge F_\beta(1-b) + \frac{1}{F_\beta(a-2b)} - \frac{1}{F_\beta(a)}.$$
    Since $a-2b=1-c>0$ implies $0<a-2b < a<1$, from Lemma \ref{lem:reciprocal_gap_monotone} we have
    $$\frac{1}{F_\beta(a-2b)} - \frac{1}{F_\beta(a)} > \frac{1}{F_0(a-2b)} - \frac{1}{F_0(a)}.$$
    Since $F_\beta(1-b) > F_0(1-b)$ from Lemma \ref{Fbeta_beta_monotonicity},  to prove $E_\beta > 0$, it is sufficient to show
    $$F_0(1-b) + \frac{1}{F_0(a-2b)} - \frac{1}{F_0(a)} > 0.$$
    Let $U_0(x) = \frac{1}{F_0(x)} = \frac{x^2}{(1-x)(3x+1)}$. Then
    $$U_0'(x) = \frac{2x(x+1)}{(1-x)^2(3x+1)^2},$$
    which is strictly increasing on $(0,1)$. Moreover, for $\xi \le \frac{1}{2p}$ we have 
    $2a-b = 1 - \frac{1}{p} + \xi < 1$, which implies $a<\frac{1+b}{2}$ . Applying the Fundamental Theorem of Calculus 
    \begin{align}\label{bound_last_lem1}
        U_0(a) - U_0(a-2b) = \int_{a-2b}^{a} U_0'(x) \, dx < 2b \, U_0'\left(\frac{1+b}{2}\right).
    \end{align}
    Since $p/q > 3/5$, we have 
    $$0<b = \xi + \frac{2q-3p-1}{p} \le \frac{1}{2p} + 2\frac{q}{p} - 3 - \frac{1}{p} < \frac{1}{3}.$$
    Further, $F_0(1-b) = \frac{b(4-3b)}{(1-b)^2}$ and 
    $$F_0(1-b) - 2b \, U_0'\left(\frac{1+b}{2}\right) = -\frac{b\left(27b^3 + 70b^2 + 19b - 52\right)}{(1-b)^2(3b+5)^2}.$$
    The polynomial $P(b) = 27b^3 + 70b^2 + 19b - 52$ is strictly increasing for $b > 0$ and $P\left(\frac{1}{3}\right) = -\frac{332}{9} < 0.$
    Since $P(1/3)$ is strictly negative, $P(b) < 0$ for all $0 < b < 1/3$. Consequently, 
    \begin{align}\label{bound_last_lem2}
        F_0(1-b) > 2b \, U_0'\left(\frac{1+b}{2}\right).
    \end{align}
    Combining this with \eqref{bound_last_lem1} yields
    $$F_0(1-b) > U_0(a) - U_0(a-2b).$$
    Substituting the definition of $U_0(x)$, this is exactly equivalent to
    $$F_0(1-b) + \frac{1}{F_0(a-2b)} - \frac{1}{F_0(a)} > 0.$$
    Therefore, $E_\beta > 0$, and this completes the proof.
\end{pf}

 \section*{Acknowledgments}
The author thanks Professor Karlheinz Gröchenig for his valuable comments and suggestions on an earlier version of this manuscript.

\section*{Data Availability}
The author generated the figures using MATLAB. The data that support the findings of this study are available within the article and its supplementary material.
\section*{Declarations}
\textbf{Conflict of Interest}: The author has no conflicts to disclose.

\bibliographystyle{plain}

\bibliography{totallypositive}
\end{document}